\theoremstyle{plain}
\newtheorem{theorem}{Theorem}[section]
\newtheorem{lemma}[theorem]{Lemma}
\newtheorem{proposition}[theorem]{Proposition}
\theoremstyle{definition}
\theoremstyle{remark}
\newtheorem{remark}[theorem]{Remark}
\numberwithin{equation}{section}
\date{}
\title{Convergence of the derivative martingale for the branching random walk in time-inhomogeneous  random environment \thanks{This work was supported in part by  NSFC (No.~11971062), and the National Key Research and Development Program of China (No.~2020YFA0712900).}}
\author{Wenming Hong \thanks{School of Mathematical Sciences \& Laboratory of Mathematics and Complex Systems, Beijing Normal University, Beijing 100875, P.R. China. Email: wmhong@bnu.edu.cn} ~and~ Shengli Liang \thanks{School of Mathematical Sciences \& Laboratory of Mathematics and Complex Systems, Beijing Normal University, Beijing 100875, P.R. China. Email: liangshengli@mail.bnu.edu.cn}}
\begin{document}
	
	\maketitle	
	\begin{center}
		\begin{minipage}{12cm}
			\begin{center}\textbf{Abstract}\end{center}
			Consider a branching random walk on the real line with a random environment in time (BRWRE). A necessary and sufficient condition for the non-triviality of the limit of the derivative martingale is formulated. To this end,  we investigate the  random walk in time-inhomogeneous random  environment (RWRE), which related the BRWRE by the many-to-one formula. The key step is to figure out Tanaka's decomposition for the RWRE conditioned to stay non-negative (or above a line), which is interesting itself as well.
		 
			\bigskip
			
			\mbox{}\textbf{Keywords}: Branching random walk; random environment; derivative martingale; quenched harmonic function; random walk conditioned to stay non-negative, Tanaka's decomposition. \\
			\mbox{}\textbf{Mathematics Subject Classification}: Primary 60J80; 60K37; secondary 60G42.
			
		\end{minipage}
	\end{center}


\section{Introduction and main result}\label{section 1}

Consider a discrete-time branching random walk on $\mathbb{R}$ in the random environment (BRWRE). The \textit{random environment} is represented by a sequence of random variables $\xi=\left(\xi_{n}, n\geq 1\right)$ which defined on some probability space $\left(\Omega, \mathscr{A}, \mathbf{P}\right)$. We assume throughout that $\left(\xi_{n}, n\geq 1\right)$ are independent and identically distributed (i.i.d.) random variables. Each realization of $\xi_{n}$ corresponding a point process law $\mathcal{L}_{n}=\mathcal{L}\left(\xi_{n}\right)$. Given the environment, the \textit{time-inhomogeneous branching random walk} is described as follows. It starts at time $0$ with an initial particle (denote by $\varnothing$) positioned at the origin. This particle dies at time $1$ and gives birth to a random number of children who form the first generation and whose positions are given by a point process $L_{1}$ with law $\mathcal{L}_{1}$. For any integer $n\geq1$, each particle alive at generation $n$ dies at time $n+1$ and gives birth independent of all others to its own children who are in the $(n+1)$-th generation and are positioned (with respect to their parent) according to the point process $L_{n+1}$ with law $\mathcal{L}_{n+1}$. All particles behave independently conditioned on the environment $\xi$. The process goes on as described above if there are particles alive. We denote by $\mathbb{T}$ the genealogical tree of the process. For a given vertex $u\in \mathbb{T}$, we denote by $V\left(u\right)\in\mathbb{R}$ its position and $|u|$ its generation. We write $u_{i}$ $\left(0\leq i\leq|u|\right)$ for its ancestor in the $i$-th generation (with the convention that $u_{0}:=\varnothing$ and $u_{|u|}=u$). Given a realization of $\xi$, we write $\mathbb{P}_{\xi}$ for the conditional (or quenched) probability and $\mathbb{E}_{\xi}$ for the corresponding expectation. The joint (or annealed) probability of the environment and the branching random walk is written $\mathbb{P}:=\mathbb{P}_{\xi}\otimes\mathbf{P}$, that is, $$\mathbb{P}(\cdot)=\int_{\Omega}\mathbb{P}_{\xi}(\cdot)\, d\mathbf{P},$$ with the corresponding expectation $\mathbb{E}$.

This model was first introduced by Biggins and Kyprianou \cite{BK04}. Recently, some results for the homogeneous branching random walk have been extended to the BRWRE. Huang and Liu \cite{HL14} proved a law of large numbers for the maximal position and large deviation principles for the counting measure of the process. Gao, Liu and Wang \cite{GLW14} obtained the central limit theorem. Wang and Huang \cite{WH17} considered the $L^{p}$ convergence rate of additive martingale and moderate deviations principles for the counting measure. Mallein and Mi\l{}o{\'s} \cite{MM19} investigated the second order asymptotic behavior of maximal displacement. Also, there are many authors who focus on other kind of random environments. For example, Greven and den Hollander \cite{GdH92} considered the branching random walk with the reproduction law of the particles depending on their location. Yoshida \cite{Yos08} and Hu and Yoshida \cite{HY09} investigated the branching random walk with space-time i.i.d. offspring distributions.

In this paper, we consider the limit of the derivative martingale for the BRWRE, which proved to play an important role in the convergence of both minimal position and additive martingale for the classical branching random walk, see A\"{i}d\'{e}kon \cite{Aid13}, A\"{i}d{\'e}kon and Shi \cite{AS14}, respectively. Also, one original motivation for our result comes from the study of the Seneta-Heyde norming  of the additive martingale for this model.

For each $n\geq1$, $t\in\mathbb{R}$, we introduce the $\log$-Laplace transform of the point process  $L_{n}$ as follows:
$$\Psi_{n}\left(t\right):=\log\mathbb{E}_{\xi}\left[\int_{\mathbb{R}}e^{-tx}L_{n}\left(dx\right)\right]=\log\mathbb{E}_{\xi}\left[\sum_{x\in L_{n}}e^{-tx}\right].$$
The \textit{additive martingale} is defined as:
$$W_{n}(t):=\sum_{|u|=n}e^{-tV(u)-\sum_{i=1}^{n}\Psi_{i}(t)}.$$
Let $\mathscr{F}_{n}:=\sigma\left(\xi_{1},\xi_{2},\cdots,\left(u, V(u)\right), |u|\leq n\right)$. It is well known that for each $t$ fixed, $\left(W_{n}(t),n\geq0\right)$ forms a non-negative martingale with respect to the filtration $\left(\mathscr{F}_{n},n\geq0\right)$ under both laws $\mathbb{P}_{\xi}$ and $\mathbb{P}$. By the martingale convergence theorem, $W_{n}(t)$ converges almost surely (a.s.) to a non-negative limit. In the deterministic environment case, Biggins \cite{Big77} gave a necessary and sufficient condition for the $L^{1}$-convergence of $W_{n}(t)$, we refer to Lyons \cite{Lyo97} for a simple probabilistic proof based on the spinal decomposition. Later, Biggins and Kyprianou \cite{BK04} extended this to the random environment case.

To ensure the non-extinction and non-triviality of the BRWRE, we assume that
\begin{equation}\label{ass1}
	\begin{aligned}
		&\mathbb{P}_{\xi}\left(\sum_{|u|=1}1\geq1\right)=1, ~~\mathbf{P}\text{-a.s.}, ~~~~\mathbf{P}\left[\mathbb{P}_{\xi}\left(\sum_{|u|=1}1>1\right)>0\right]>0,\\ &\mathbb{E}_{\xi}\left[\sum_{|u|=1}\mathbf{1}_{\left\{V(u)>0\right\}}e^{-V(u)}\right]>0, ~~\mathbf{P}\text{-a.s.}.
	\end{aligned}
\end{equation}

We consider the \textit{boundary case} (in the quenched sense) in this paper, that is,
\begin{equation}\label{boundary condition}
	\log\mathbb{E}_{\xi}\left[\sum_{|u|=1}e^{-V(u)}\right]=\mathbb{E}_{\xi}\left[\sum_{|u|=1}V(u)e^{-V(u)}\right]=0, ~~\mathbf{P}\text{-a.s.}.
\end{equation}
In fact, if we assume that there exists $t^{*}>0$ such that $\mathbf{P}${-a.s.} $\Psi_{1}$ is differentiable at point $t^{*}$ and $\Psi_{1}(t^*)=t^*\Psi^{'}_{1}(t^*)$, then without loss of generality we can assume that $t=1$ and $\Psi_{1}(1)=\Psi^{'}_{1}(1)=0, \mathbf{P}\text{-a.s.}$ For a general case, we can construct a new BRWRE with position replaced by $\tilde{V}(u):=t^*V(u)+\sum_{i=1}^{|u|}\Psi_{i}(t^*)$, $u\in\mathbb{T}$, the $\log$-Laplace transform of this new process satisfies $\tilde{\Psi}_{1}(1)=\tilde{\Psi}^{'}_{1}(1)=0, \mathbf{P}\text{-a.s.}$.

We take interest in the \textit{derivative martingale}, defined by
$$D_{n}:=\sum_{|u|=n}V(u)e^{-V(u)},~~n\geq 0.$$
It is easy to show that, in the boundary case, $\left(D_{n},n\geq0\right)$ is a signed martingale with respect to the filtration $\left(\mathscr{F}_{n},n\geq0\right)$ under both laws $\mathbb{P}_{\xi}$ and $\mathbb{P}$. For the branching random walk with constant environment, the derivative martingale has been studied in different contexts. From the perspective of smoothing transformation in the sense of Durrett and Liggett \cite{DL83} and Liu \cite{Liu98}, the limit of the derivative martingale serves as a fixed point of the smoothing transformation and the existence, uniqueness, asymptotic behavior of such fixed point has been investigated in \cite{BK05,Kyp98,Liu00}. In \cite{BK04}, Biggins and Kyprianou derived the sufficient condition for the non-triviality (and triviality) of the limit of the derivative martingale. Later, A\"{i}d\'{e}kon \cite{Aid13} gave the optimal condition for the non-triviality which proved to be necessary by Chen \cite{Che15}. For the branching Brownian motion, the necessary and sufficient condition for the non-degeneracy of the limit of the derivative martingale was given by Yang and Ren \cite{YR11}. Recently, Mallein and Shi \cite{MS22} obtained the necessary and sufficient condition for branching L\'{e}vy process.

In addition, we assume that there exists $\delta>0$ such that
\begin{equation}\label{2 more moment}
	\mathbb{E}\left[\sum_{|u|=1}V(u)^{2+\delta}e^{-V(u)}\right]<\infty,
\end{equation}
The assumption $\left(\ref{2 more moment}\right)$ ensures us to prove the existence and some useful asymptotic behaviours of the quenched harmonic function in Section \ref{section 2}.

The main result of this paper is to prove the existence of the limit of derivative martingale for the BRWRE and obtain a necessary and sufficient condition for the non-degeneracy of the limit, which is stated as follows.

\begin{theorem}\label{main result}
	Under the assumptions $\left(\ref{ass1}\right)$, $\left(\ref{boundary condition}\right)$ and $\left(\ref{2 more moment}\right)$, we have
	
	$\left(1\right)$ The derivative martingale $\left(D_{n},n\geq0\right)$ converges almost surely to a non-negative finite limit which we denote by $D_{\infty}$, i.e., $$\lim_{n\to\infty}D_{n}=D_{\infty}\geq0, ~~\mathbb{P}\text{-a.s.}.$$
	
	$\left(2\right)$ For almost all $\xi$, $D_{\infty}$ is non-triviality if and only if
	\begin{equation}\label{iff}
		\mathbb{E}\left[Y\log^2_{+}Y+Z\log_{+}Z\right]<\infty,
	\end{equation}
    more precisely,
    \begin{equation}\nonumber
    	\begin{aligned}
    		&\mathbb{P}_{\xi}\left(D_{\infty}>0\right)>0, ~~\mathbf{P}\text{-a.s.}, ~~\Longleftrightarrow~~ \mathbb{E}\left[Y\log^2_{+}Y+Z\log_{+}Z\right]<\infty,\\
    		&\mathbb{P}_{\xi}\left(D_{\infty}=0\right)=1, ~~\mathbf{P}\text{-a.s.}, ~~\Longleftrightarrow~~ \mathbb{E}\left[Y\log^2_{+}Y+Z\log_{+}Z\right]=\infty,
    	\end{aligned}
    \end{equation}
    where $\log_{+}x:=\max\left\{0, \log x\right\}$ and $\log^2_{+}x:=\left(\log_{+}x\right)^2$ for any $x\geq0$, and
    $$Y:=\sum_{|u|=1}e^{-V(u)}, ~~Z:=\sum_{|u|=1}V(u)e^{-V(u)}\mathbf{1}_{\left\{V(u)\geq 0\right\}}.$$
\end{theorem}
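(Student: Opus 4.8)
\emph{Proof plan.} I would follow the truncation-plus-spine strategy of the derivative-martingale literature (Biggins--Kyprianou, A\"{i}d\'{e}kon, Chen), with the two classical deterministic ingredients --- the renewal/harmonic function of the associated killed walk and Tanaka's decomposition of the walk conditioned to stay non-negative --- replaced by their quenched, time-inhomogeneous counterparts built in Section~\ref{section 2}.

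\emph{Part~(1).} Everything is routed through the many-to-one formula, which under \eqref{boundary condition} couples the BRWRE with an associated random walk in time-inhomogeneous random environment (RWRE) $(S_n)_{n\ge0}$, $S_0=0$: conditionally on $\xi$ the increments $X_{n+1}:=S_{n+1}-S_n$ are independent with quenched law given by $\mathbb{E}_\xi[f(X_{n+1})]=\mathbb{E}_\xi[\sum_{x\in L_{n+1}}f(x)e^{-x}]$, so that $\mathbb{E}_\xi[X_{n+1}]=0$ and $\mathbb{E}_\xi[\sum_{|u|=n}g(V(u_1),\dots,V(u_n))]=\mathbb{E}_\xi[e^{S_n}g(S_1,\dots,S_n)]$. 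Let $R=(R_n)_{n\ge0}$ be the quenched harmonic function for $S$ killed below $0$ from Section~\ref{section 2}: positive on $(0,\infty)$, $\mathbb{E}_\xi[R_{n+1}(x+X_{n+1})\mathbf{1}_{\{x+X_{n+1}\ge0\}}]=R_n(x)$ for $x\ge0$, with the linear asymptotics proved there. For $\beta>0$ I would set
\[
D_n^{(\beta)}:=\sum_{|u|=n}R_n(\beta+V(u))\,e^{-\beta-V(u)}\,\mathbf{1}_{\{\beta+V(u_i)\ge0\ \text{for}\ 0\le i\le n\}},
\]
which by harmonicity of $R$ and the many-to-one formula is a non-negative $\mathbb{P}_\xi$- (hence $\mathbb{P}$-) martingale, so it converges $\mathbb{P}$-a.s.\ to a finite limit $D_\infty^{(\beta)}$. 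Comparing $D_n$ with $D_n^{(\beta)}$ --- using the asymptotic linearity of $R_n$, the a.s.\ vanishing of the additive martingale $W_n(1)$ in the boundary case, and a bound on the contribution of particles whose path ever dips below $-\beta$ --- would then give that $D_n$ converges $\mathbb{P}$-a.s.\ to a limit $D_\infty=\lim_{\beta\to\infty}D_\infty^{(\beta)}\in[0,\infty)$, which is~(1).

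\emph{Part~(2), sufficiency.} Next I would attach the spine change of measure to $D_n^{(\beta)}$: taking $D_n^{(\beta)}/D_0^{(\beta)}$ as Radon--Nikodym derivative on $\mathscr{F}_n$ defines $\widehat{\mathbb{P}}_\xi$, under which a distinguished ray $(w_n)_{n\ge0}$ of $\mathbb{T}$ appears whose displacement $(V(w_n))_n$ is, quenched, the RWRE $(S_n)$ conditioned to stay above $-\beta$, while off the spine the particles reproduce along the $e^{-\cdot}$-size-biased laws and launch independent BRWREs. By the classical criterion relating a non-negative martingale to its spine measure, $\mathbb{P}_\xi(D_\infty^{(\beta)}>0)>0$ is equivalent to $\widehat{\mathbb{P}}_\xi(\liminf_n D_n^{(\beta)}<\infty)>0$, and the spine decomposition represents $D_n^{(\beta)}$ under $\widehat{\mathbb{P}}_\xi$ as (a constant times) a sum over $k\le n$ of the contributions of the bushes hanging off $(w_k)$, each of order $(1+\beta+V(w_{k-1}))\,e^{-\beta-V(w_{k-1})}$ times the $e^{-\cdot}$-offspring-mass and the displacement those bushes emit. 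This is where Tanaka's decomposition of the RWRE conditioned to stay non-negative enters decisively: it yields quenched, uniform-in-$\xi$ control of the conditioned spine (typical height $\asymp\sqrt{k}$ at time $k$), of its overshoots over $0$, and of its sojourn near $0$, and reduces matters to the a.s.\ finiteness under $\widehat{\mathbb{P}}_\xi$ of an explicit series over the spine, which is finite precisely when \eqref{iff} holds --- the $\log^2_+$ on the offspring term arising from the $\asymp\sqrt{k}$ spine height combined with the size-biasing along the spine, the $\log_+$ on the displacement term from the overshoot estimates. A conditional truncated second-moment argument would upgrade this to $\mathbb{P}_\xi(D_\infty>0)>0$ for $\mathbf{P}$-a.e.\ $\xi$.

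\emph{Part~(2), necessity and conclusion; main obstacle.} For the converse, if $\mathbb{E}[Y\log^2_+Y+Z\log_+Z]=\infty$ the same series diverges $\widehat{\mathbb{P}}_\xi$-a.s., so $D_n^{(\beta)}\to\infty$ $\widehat{\mathbb{P}}_\xi$-a.s.\ and hence $D_\infty^{(\beta)}=0$ $\mathbb{P}_\xi$-a.s.; letting $\beta\to\infty$ gives $D_\infty=0$ $\mathbb{P}_\xi$-a.s.\ (equivalently, a peeling/Borel--Cantelli argument on the tree: when \eqref{iff} fails, $\mathbf{P}$-a.s.\ infinitely many generations carry a particle with anomalously large $e^{-\cdot}$-offspring-mass or positive displacement, each of which forces $D_n$ towards $0$). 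Finally, since the event $\{\xi:\mathbb{P}_\xi(D_\infty>0)>0\}$ is invariant under the shift on the i.i.d.\ environment --- by the first-generation recursion $D_\infty=\sum_{|u|=1}e^{-V(u)}D_\infty^{[u]}$ together with \eqref{ass1} --- it has $\mathbf{P}$-probability $0$ or $1$, and the two computations above decide which alternative holds according to \eqref{iff}. The principal obstacle, absent in the deterministic-environment results of A\"{i}d\'{e}kon and Chen, is that $R_n$ and the walk $S$ conditioned to stay non-negative are themselves random and time-inhomogeneous, so the renewal-theoretic backbone --- existence and linear asymptotics of the quenched harmonic function, the ladder structure, and above all Tanaka's decomposition of the conditioned walk --- has to be built in the quenched setting with bounds controllable in $\xi$ (which is what \eqref{2 more moment} supplies); extracting from these the sharp two-sided control of $R_n(x)$ and of $\mathbb{P}_\xi(S_j\ge0,\ 1\le j\le n)$ that produces the exact $\log^2_+/\log_+$ split in \eqref{iff} is the crux.
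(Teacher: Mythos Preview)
Your overall architecture --- truncated martingale via the quenched harmonic function, spine change of measure, Biggins--Kyprianou criterion reducing $L^1$-convergence of $D_n^{(\beta)}$ to a.s.\ convergence/divergence of a series along the spine --- matches the paper exactly, and Part~(1) is essentially as the paper does it (Lemma~\ref{connection}). But two of your key technical steps differ from the paper's route, and in each case the paper's approach is the one that actually closes the argument.

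\emph{Sufficiency.} You propose to control the spine series pathwise via Tanaka's decomposition and ``typical height $\asymp\sqrt{k}$''. The paper does not do this: Tanaka plays almost no role in the sufficiency half. Instead, the decisive observation is that under the \emph{annealed} law the associated walk $(S_n)$ is an ordinary i.i.d.\ random walk (since the environment is i.i.d.), so classical duality converts the annealed expectation of \emph{any} sum along the conditioned spine into an integral against the usual ascending-ladder renewal measure $\mathcal{R}^{(\beta)}$; this is Lemma~\ref{lem the integral of series of zeta_{n}}. With this identity one directly bounds $\mathbb{E}\bigl[\sum_n \mathbb{E}_{\xi,\zeta_n^{(\beta)}}[\tilde X((U e^{-\zeta}\tilde X)\wedge 1)]\,U(\xi,\beta)\bigr]$ by $c(\beta)\,\mathbb{E}[Y\log_+^2 Y]+c'(\beta)\,\mathbb{E}[Z\log_+ Z]$, and Proposition~\ref{BK th 2.1}(1) gives $\mathbb{E}_\xi[D_\infty^{(\beta)}]=U(\xi,\beta)$. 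No second-moment argument is needed.

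\emph{Necessity.} Here is where Tanaka is really used, and your ``the same series diverges $\widehat{\mathbb{P}}_\xi$-a.s.'' hides the main work. The paper proves a sharp $0$--$1$ law (Proposition~\ref{prop 0-1 law}): for any non-increasing $F\ge0$, $\sum_n U(\xi,\beta)\,F(\zeta_n^{(\beta)})=\infty$ $\mathbb{P}$-a.s.\ if and only if $\int_{-\beta}^\infty F(x)(x+\beta)\,dx=\infty$. The proof uses the quenched Tanaka decomposition (Proposition~\ref{Tanaka decomposition}) to split the conditioned walk into quenched-independent excursions, then passes to the annealed law via Proposition~\ref{Annealed excursion distribution} --- which identifies the annealed excursion law with the classical pre-ladder excursion of the i.i.d.\ walk --- and applies Kolmogorov's $0$--$1$ law to the resulting i.i.d.\ sum. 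With this in hand, the necessity half still requires a three-case split (following Chen): (i) $\mathbb{E}[Y\log_+^2 Y]=\infty$ but $\mathbb{E}[Y\log_+ Y]<\infty$; (ii) $\mathbb{E}[Y\log_+ Y]=\infty$; (iii) $\mathbb{E}[Z\log_+ Z]=\infty$. In each case one lower-bounds $\tilde X$ differently (by $\frac{\zeta+\beta}{2U}Y_+$, by $\frac{1}{U}Y_+$, by $\frac{1}{U}\tilde Z$ respectively), applies Proposition~\ref{prop 0-1 law} to the dominant piece, and shows the error term has \emph{finite} annealed expectation again via Lemma~\ref{lem the integral of series of zeta_{n}}. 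Your sketch does not isolate this $0$--$1$ law or the case analysis, and a Borel--Cantelli/peeling argument on the tree does not by itself produce the divergence criterion you need.

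Finally, your shift-invariance argument for the $0$--$1$ dichotomy in $\xi$ is unnecessary: once you show $\mathbb{E}_\xi[D_\infty^{(\beta)}]=U(\xi,\beta)>0$ for $\mathbf{P}$-a.e.\ $\xi$ (sufficiency) or $D_\infty^{(\beta)}=0$ $\mathbb{P}_\xi$-a.s.\ for all $\beta$ and $\mathbf{P}$-a.e.\ $\xi$ (necessity), Lemma~\ref{connection} already gives the quenched statement directly.
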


\

The idea to prove the Theorem is followed the  general argument of Biggins and Kyprianou \cite{BK04}. Note that for the constant environment situation (A\"{i}d\'{e}kon \cite{Aid13} and  Chen \cite{Che15}), 
a basic tool is Tanaka's decomposition for the random walk conditioned to stay non-negative. Here  we should deal with the {\it random environment}. To this end,  we investigate the  random walk in time-inhomogeneous random  environment (RWRE), which related the BRWRE by the many-to-one formula. Based on the quenched harmonic function (\cite{HL22}) for the RWRE, we  figure out quenched Tanaka's decomposition for the RWRE conditioned to stay non-negative (Proposition \ref{Tanaka decomposition} and \ref{Annealed excursion distribution}), which is  a novelty of this paper and is interesting itself as well.

Let us describe briefly the proof of Theorem \ref{main result}. To prove the a.s. convergence of the derivative martingale $D_n$, we introduce the truncated martingale $D^{(\beta)}_n$ by formulating a quenched harmonic function for the associated random walk, and use $D^{(\beta)}_n$ to approach $D_n$. To prove the necessary and sufficient condition for the non-degeneracy of the limit $D_{\infty}$, we adapt the general argument of Biggins and Kyprianou \cite{BK04}, using truncated martingale to define a new probability, under which the branching random walk is characterized by a spinal decomposition (Proposition \ref{spinal decomposition} and \ref{law of the spine}). By means of the spinal decomposition, we can give the criterion conditions for the $L^1$-convergence of $D^{(\beta)}_n$ or the degeneracy of the limit $D^{(\beta)}_{\infty}$ (Proposition \ref{BK th 2.1}). Then by Lemma \ref{connection}, these conditions are equivalent to the non-triviality or triviality of the limit $D_{\infty}$ of the derivative martingale $D_n$. Thus, it boils down to checking the conditions of Proposition \ref{BK th 2.1}. For the a.s. convergence of the random series in Proposition \ref{BK th 2.1} (1), we show that its expectation is finite under $\left(\ref{iff}\right)$. For the a.s. divergence of the random series in Proposition \ref{BK th 2.1} (2), we prove it by a equivalent integral condition (Proposition \ref{prop 0-1 law}), which holds when $\left(\ref{iff}\right)$ fails.

The rest of this paper is organized as follows. In Section \ref{section 2}, we introduce a quenched harmonic function which is used to constructed the random walk conditioned to stay above a line. Then, we give a version of Tanaka's decomposition for the random walk conditioned to stay non-negative in our setting, by which an equivalent integral condition for the a.s. divergence of random series associated with conditioned random walk is proved. In Section \ref{section 3}, we use a truncated derivative martingale to make a change of measure and give a spinal decomposition of time-inhomogeneous branching random walk, the details are given in Appendix. Finally, we obtain a necessary and sufficient condition for the non-trivial limit of the derivative martingale in Section \ref{section 4}.

Throughout the paper, we denote by $\left(c_{i},i\geq0\right)$ the positive constants and $c_{i}(\beta)$ the positive constant depending on $\beta$. The indicator function on the
event $A$ is denoted by $\mathbf{1}_{A}$. We use $x_{n}\sim y_{n} (n\to\infty)$ to denote $\lim_{n\to\infty}\frac{x_{n}}{y_{n}}=1$, and when $x_{n}$ and $y_{n}$ are random variables, the limit holds in the sense of a.s. convergence. Write for $x\in\mathbb{R}\cup\{\infty\}\cup\{-\infty\}$, $x_{+}:=\max\{x,0\}$. We also adopt the notations $\sum_{\emptyset}\left(\cdots\right):=0$ and $\prod_{\emptyset}\left(\cdots\right):=1$.


\section{Quenched harmonic function and conditioned random walk}\label{section 2}

We introduce in this section the many-to-one lemma that links BRWRE with RWRE. Then, based on the quenched harmonic function (\cite{HL22}) for the RWRE, we define the law of the random walk conditioned to stay above a line. After discussing the relationship between the quenched probability $\mathbb{P}^{+,(\beta)}_{\xi}\left(x;dy\right)$ and the annealed renewal measure $\mathcal{R}(dy)$, we give a version of Tanaka's decomposition. And as a consequence, an equivalent integral condition for the a.s. divergence of random series about conditioned random walk is proved.

\subsection{The many-to-one lemma}

The well-known many-to-one lemma is a powerful tool in the study of branching random walk, see Shi \cite{Shi15} and reference therein. In this paper, we need a time-inhomogeneous version of this lemma. For all $n\geq1$, we define the probability measure $\mu_{n}$ on $\mathbb{R}$ by
$$\mu_{n}\left(B\right):=\mathbb{E}_{\xi}\left[\sum_{x\in L_{n}}\mathbf{1}_{\left\{x\in B\right\}}e^{-x}\right], ~~\forall B\in\mathscr{B}(\mathbb{R}).$$
Note that $\mu_{n}\left(B\right)$ is a random variable depending on $\xi$. Up to a possible enlargement of the probability space, we define a sequence $\left(X_{n}, n\geq1\right)$ of independent random variables, where $X_{n}$ has law $\mu_{n}$. Let $S_{n}:=S_{0}+\sum_{i=1}^{n}X_{i}$. The process $\left(S_{n},n\geq0\right)$ is a random walk with random environment in time. For our convenience, $\mathbb{P}_{\xi}$ also stands for the joint law of the BRWRE and the RWRE, given the environment $\xi$. If we emphasize that the process starts from $a\in\mathbb{R}$, this law will denote by $\mathbb{P}_{\xi,a}$ and $\mathbb{P}_{\xi}:=\mathbb{P}_{\xi,0}$. The following time-inhomogeneous many-to-one lemma can be found in Lemma 2.2 of Mallein \cite{Mal15}.

\begin{lemma}[Many-to-one]
	For all $n\geq1$ and any measurable function $f:\mathbb{R}^{n}\to\mathbb{R}_{+}$, we have
	\begin{equation}\label{many-to-one}
		\mathbb{E}_{\xi, a}\left[\sum_{|u|=n}f\left(V(u_{1}), \cdots, V(u_{n})\right)\right]=\mathbb{E}_{\xi, a}\left[e^{S_{n}-a}f\left(S_{1}, \cdots, S_{n}\right)\right], ~~\mathbf{P}\text{-a.s.}
	\end{equation}
    with $\mathbb{P}_{\xi, a}\left(S_{0}=a\right)=1, ~~\mathbf{P}\text{-a.s.}$.
\end{lemma}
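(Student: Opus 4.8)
The plan is to prove $\left(\ref{many-to-one}\right)$ by induction on $n$, working under a fixed realization of the environment; since every manipulation below uses only the defining property of the measures $\mu_k$ and the branching structure of the model — both valid simultaneously for $\mathbf{P}$-almost every $\xi$ — the $\mathbf{P}$-a.s.\ qualifier is harmless. Because $f\geq0$, the integrand $e^{S_n-a}f(S_1,\dots,S_n)$ is nonnegative, so all expectations take values in $[0,\infty]$ and Tonelli's theorem may be applied without integrability hypotheses.

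For the base case $n=1$: under $\mathbb{P}_{\xi,a}$ the first-generation positions form the point process $\{a+x:x\in L_1\}$ with $L_1$ of law $\mathcal{L}_1$, so $\mathbb{E}_{\xi,a}[\sum_{|u|=1}f(V(u))]=\mathbb{E}_\xi[\sum_{x\in L_1}f(a+x)]$. Meanwhile $\mathbb{E}_{\xi,a}[e^{S_1-a}f(S_1)]=\int_{\mathbb{R}}e^{x}f(a+x)\,\mu_1(dx)$, and feeding the test function $x\mapsto e^{x}f(a+x)$ into the definition $\mu_1(B)=\mathbb{E}_\xi[\sum_{x\in L_1}\mathbf{1}_{\{x\in B\}}e^{-x}]$ turns this into $\mathbb{E}_\xi[\sum_{x\in L_1}f(a+x)]$, matching the left-hand side.

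For the inductive step, assume $\left(\ref{many-to-one}\right)$ for $n$, and for every environment. Write $\theta\xi:=(\xi_2,\xi_3,\dots)$ for the shifted environment. Conditionally on the first generation, the subtree rooted at a particle $v$ with $|v|=1$ is an independent branching random walk started at $V(v)$ in the environment $\theta\xi$; let $(S'_k)_{k\ge0}$ denote the corresponding RWRE started at $V(v)$, with increments of laws $\mu_2,\mu_3,\dots$. Applying the induction hypothesis to the environment $\theta\xi$ and to the function $(y_1,\dots,y_n)\mapsto f(V(v),y_1,\dots,y_n)$ gives
\[
\mathbb{E}_{\theta\xi,V(v)}\Big[\sum_{|u'|=n}f\big(V(v),V(u'_1),\dots,V(u'_n)\big)\Big]
=\mathbb{E}_{\theta\xi,V(v)}\Big[e^{S'_n-V(v)}f\big(V(v),S'_1,\dots,S'_n\big)\Big].
\]
Summing over $|v|=1$, using the branching property to write $\mathbb{E}_{\xi,a}[\sum_{|u|=n+1}f(\cdots)]=\mathbb{E}_{\xi,a}[\sum_{|v|=1}g(V(v))]$ with $g(y):=\mathbb{E}_{\theta\xi,y}[e^{S'_n-y}f(y,S'_1,\dots,S'_n)]$, and then applying the base case to $g$, we obtain $\mathbb{E}_{\xi,a}\big[e^{S_1-a}\,\mathbb{E}_{\theta\xi,S_1}[e^{S'_n-S_1}f(S_1,S'_1,\dots,S'_n)]\big]$. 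Since the concatenation $(S_1,S'_1,\dots,S'_n)$ has precisely the law of $(S_1,\dots,S_{n+1})$ under $\mathbb{P}_{\xi,a}$ — increments of laws $\mu_1,\mu_2,\dots,\mu_{n+1}$ — and $e^{S_1-a}e^{S'_n-S_1}=e^{S_{n+1}-a}$, the tower property yields $\mathbb{E}_{\xi,a}[e^{S_{n+1}-a}f(S_1,\dots,S_{n+1})]$, completing the induction.

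Two minor points deserve attention and constitute the only subtleties. First, the induction hypothesis is invoked for $\theta\xi$ rather than $\xi$; this is legitimate because $(\xi_n)_{n\ge1}$ is i.i.d., hence its law is shift-invariant, so a property holding for $\mathbf{P}$-a.e.\ $\xi$ also holds with $\theta\xi$ in place of $\xi$ for $\mathbf{P}$-a.e.\ $\xi$. Second, one must check that $y\mapsto\mathbb{E}_{\theta\xi,y}[e^{S'_n-y}f(y,S'_1,\dots,S'_n)]$ is measurable and that the conditioning and Tonelli steps are valid; both are routine since the starting point enters only through an additive shift. There is no substantive obstacle — this is the standard change-of-measure identity underlying the spine construction, recorded in Lemma~2.2 of Mallein \cite{Mal15}.
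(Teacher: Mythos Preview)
Your induction argument is correct and is the standard proof of the many-to-one formula. The paper itself does not supply a proof of this lemma at all: it simply records the statement and refers the reader to Lemma~2.2 of Mallein~\cite{Mal15}. So there is nothing to compare against beyond noting that your write-up fills in what the paper takes for granted.
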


\subsection{Quenched harmonic function}

In this subsection, we introduce the quenched harmonic function which will be used to construct the random walk conditioned to stay in a given interval.

It follows from $\left(\ref{ass1}\right)$, $\left(\ref{boundary condition}\right)$, $\left(\ref{2 more moment}\right)$ and $\left(\ref{many-to-one}\right)$ that
\begin{equation}\label{condition for quenched harmonic function}
	\begin{aligned}
		&\mathbb{P}_{\xi}\left(S_{1}>0\right)=\mathbb{E}_{\xi}\left[\sum_{|u|=1}\mathbf{1}_{\left\{V(u)>0\right\}}e^{-V(u)}\right]>0, ~~\mathbf{P}\text{-a.s.},\\
		&\mathbb{E}_{\xi}(S_{1})=\mathbb{E}_{\xi}\left[\sum_{|u|=1}V(u)e^{-V(u)}\right]=0,~~\mathbf{P}\text{-a.s.},\\
		&\mathbb{E}(S^{2+\delta}_{1})=\mathbb{E}\left[\sum_{|u|=1}V(u)^{2+\delta}e^{-V(u)}\right]<\infty.
	\end{aligned}
\end{equation}
Under $\left(\ref{condition for quenched harmonic function}\right)$, we can formulate the quenched harmonic function as below.

Let $y\geq0$, denote by $\tau_{y}$ the first moment when $\{S_n\}$ enters the interval $(-\infty, -y)$: $$\tau_{y}:=\inf\left\{n\geq1: y+S_n<0\right\}.$$
Define $$U_n(\xi,y):=\mathbb{E}_\xi\left((y+S_n)\mathbf{1}_{\left\{\tau_{y}>n\right\}}\right).$$ Let $\theta$ be the shift operator, i.e. $\theta\xi:=(\xi_2,\xi_3,\cdots)$. For $n\geq1$, $\theta^n\xi:=\theta(\theta^{n-1}\xi)$ with the
convention that $\theta^0\xi:=\xi$. The following proposition (see Proposition 1.1, Theorem 1.2 and Corollary 1.3 of Hong and Liang \cite{HL22}) proves the existence and asymptotic behaviour of a positive quenched harmonic function.

\begin{proposition}\label{harmonic function}
	For almost all $\xi$, we have the following statements.
	
	$\left(1\right)$ There exists a random variable $U(\xi,y)$ such that $$\lim\limits_{n\to \infty}U_{n}(\xi,y)=U(\xi,y):=-\mathbb{E}_\xi(S_{\tau_y})<\infty.$$
	
	$\left(2\right)$ $U(\xi,y)$ satisfies the quenched harmonic property:  $$U(\xi,y)=\mathbb{E}_{\xi}\left[U\left(\theta \xi,y+S_{1}\right)\mathbf{1}_{\left\{\tau_{y}>1\right\}}\right].$$
	
	$\left(3\right)$ $\left(U(\theta^n\xi,y+S_n)1_{\{\tau_{y}>n\}},n\geq1\right)$ is a martingale under $\mathbb{P}_\xi.$
	
	$\left(4\right)$ $U(\xi,y)$ is positive, non-decreasing in $y$, $U(\xi,y)\geq y$ and
		\begin{equation}\label{asymptotic behaviour of harmonic function 1}
			\lim\limits_{y\to\infty}\frac{U(\xi,y)}{y}=1.
		\end{equation}
	
	$\left(5\right)$ For any $y_n\geq0$ with $y_n\to \infty$ as $n\to \infty$, 
	    \begin{equation}\label{asymptotic behaviour of harmonic function 2}
	    	\lim\limits_{n\to\infty}\frac{U(\theta^n\xi,y_n)}{y_n}=1.
	    \end{equation}
\end{proposition}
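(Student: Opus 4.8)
\emph{Proof strategy.} I would build $U(\xi,y)$ as the increasing limit of the truncated means $U_n(\xi,y)=\mathbb{E}_\xi\big((y+S_n)\mathbf{1}_{\{\tau_y>n\}}\big)$. Monotonicity in $n$ is the starting point: since $\mathbb{E}_\xi X_k=0$ for every $k$ (by (\ref{condition for quenched harmonic function})) and $\{\tau_y>n\}\in\sigma(X_1,\dots,X_n)$, one has $\mathbb{E}_\xi\big((y+S_{n+1})\mathbf{1}_{\{\tau_y>n\}}\big)=U_n(\xi,y)$, and writing $\{\tau_y>n\}=\{\tau_y>n+1\}\sqcup\{\tau_y=n+1\}$ with $y+S_{n+1}<0$ on the last event yields
\[
U_{n+1}(\xi,y)=U_n(\xi,y)-\mathbb{E}_\xi\big((y+S_{\tau_y})\mathbf{1}_{\{\tau_y=n+1\}}\big)\ \ge\ U_n(\xi,y)\ \ge\ \cdots\ \ge\ U_0(\xi,y)=y\ \ge\ 0 ,
\]
and iterating gives $U_n(\xi,y)=y-\mathbb{E}_\xi\big((y+S_{\tau_y})\mathbf{1}_{\{\tau_y\le n\}}\big)$. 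For finiteness of the limit I would pass to the annealed level: as the $\xi_k$ are i.i.d., the increments $X_k$ are i.i.d.\ under $\mathbb{P}$, centred, with $\mathbb{E}(|X_1|^{2+\delta})<\infty$, so $(S_n)$ is a genuine finite-variance mean-zero walk; it oscillates, whence $\tau_y<\infty$ $\mathbb{P}$-a.s.\ (hence $\mathbb{P}_\xi(\tau_y<\infty)=1$ $\mathbf{P}$-a.s.), and classical renewal theory gives $\mathbb{E}\big(U_n(\xi,y)\big)=\mathbb{E}\big((y+S_n)\mathbf{1}_{\{\tau_y>n\}}\big)\uparrow -\mathbb{E}(S_{\tau_y})<\infty$. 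Monotone convergence then yields $\mathbb{E}\big(U(\xi,y)\big)=-\mathbb{E}(S_{\tau_y})<\infty$, so $U(\xi,y):=\lim_nU_n(\xi,y)<\infty$ $\mathbf{P}$-a.s., and letting $n\to\infty$ in the iterated identity (the integrand is nonnegative and increasing in $n$, and $\tau_y<\infty$ $\mathbb{P}_\xi$-a.s.) gives $U(\xi,y)=-\mathbb{E}_\xi(S_{\tau_y})$; this is $(1)$.

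For $(2)$, apply the Markov property at time $1$: on $\{y+X_1\ge0\}$ the shifted walk is driven by $\theta\xi$ from $y+X_1$, so $\mathbb{E}_\xi\big((y+S_n)\mathbf{1}_{\{\tau_y>n\}}\mid X_1\big)=\mathbf{1}_{\{y+X_1\ge0\}}U_{n-1}(\theta\xi,y+X_1)$, i.e.\ $U_n(\xi,y)=\mathbb{E}_\xi\big(U_{n-1}(\theta\xi,y+S_1)\mathbf{1}_{\{\tau_y>1\}}\big)$; sending $n\to\infty$ with monotone convergence gives the quenched harmonic identity. Property $(3)$ follows by the tower property: for $M_n:=U(\theta^n\xi,y+S_n)\mathbf{1}_{\{\tau_y>n\}}$, conditioning on $\mathscr{F}_n$ and applying the harmonic identity in the environment $\theta^n\xi$ from $y+S_n$ gives $\mathbb{E}_\xi(M_{n+1}\mid\mathscr{F}_n)=M_n$, integrability coming from $\mathbb{E}_\xi M_0=U(\xi,y)<\infty$. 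The elementary parts of $(4)$ are then immediate: $U(\xi,y)\ge U_0(\xi,y)=y$; $y\mapsto U(\xi,y)$ is non-decreasing because both $\tau_y$ and $(y+S_n)\mathbf{1}_{\{\tau_y>n\}}$ are; and $U(\xi,0)\ge U_1(\xi,0)=\mathbb{E}_\xi\big((S_1)_+\big)>0$ since $\mathbb{E}_\xi S_1=0$ and $\mathbb{P}_\xi(S_1>0)>0$.

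It remains to prove the asymptotics in $(4)$--$(5)$, equivalently to control the overshoot $U(\xi,y)-y=\mathbb{E}_\xi\big(-(y+S_{\tau_y})\big)$. For $(4)$ I would again compare with the annealed walk: classical ladder-height estimates for a mean-zero walk with $(2+\delta)$-moments give $-\mathbb{E}(S_{\tau_y})=y+O(y^{1-\delta'})$ for some $\delta'>0$, hence $\mathbb{E}\big(U(\xi,y)-y\big)=O(y^{1-\delta'})$; since $U(\xi,y)-y\ge0$, Markov's inequality gives $\mathbf{P}\big(U(\xi,y)-y>\varepsilon y\big)=O(y^{-\delta'})$, and taking the subsequence $y_j=j^{2/\delta'}$ (along which this probability is $O(j^{-2})$), Borel--Cantelli together with the monotonicity of $U(\xi,\cdot)$ and $y_{j+1}/y_j\to1$ upgrades this to $U(\xi,y)/y\to1$ $\mathbf{P}$-a.s. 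For $(5)$ the annealed bound no longer suffices, since for slowly growing $y_n$ the probabilities $\mathbf{P}(U(\theta^n\xi,y_n)-y_n>\varepsilon y_n)=\mathbf{P}(U(\xi,y_n)-y_n>\varepsilon y_n)$ need not be summable in $n$; one must instead estimate the quenched overshoot $\mathbb{E}_\xi\big(-(y+S_{\tau_y})\big)$ directly. I would decompose it according to the size of the increment $X_{\tau_y}$ crossing the level $-y$: the part with $|X_{\tau_y}|\le\varepsilon y$ contributes at most $\varepsilon y$, while the part with $|X_{\tau_y}|>\varepsilon y$ is handled by the $(2+\delta)$-moment (a crossing jump of size $>\varepsilon y$ costs a factor $(\varepsilon y)^{-1-\delta}$ via Markov) together with the fact that the conditioned walk is repelled from $-y$ (so that for all but the first $O(y^2)$ steps a crossing jump must be much larger than $\varepsilon y$), and by averaging the resulting environment-dependent bounds over the relevant window (of length of order $y^2$) via the ergodic theorem. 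Carrying this decomposition out at the shifted environments $\theta^n\xi$ uniformly in $n$, so that the ergodic averaging remains valid along the moving window $[n,n+O(y_n^2)]$, is the step I expect to be the main technical obstacle; it is precisely the surplus ``$\delta$'' in hypothesis (\ref{2 more moment}) that buys the decay $(\varepsilon y)^{-1-\delta}$ needed to run this argument.
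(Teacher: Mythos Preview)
The paper does not actually prove this proposition: it quotes it from the companion paper \cite{HL22} (see the sentence just before the statement and Remark~\ref{Rem1}), and the only methodological hint given is that in \cite{HL22} ``the quenched harmonic function is obtained by strong approximation''. So a direct comparison of proofs is with \cite{HL22}, not with the present paper.

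Your treatment of $(1)$--$(3)$ and of the elementary parts of $(4)$ is correct and essentially standard; two minor remarks. First, in $(1)$ you only conclude $U(\xi,y)<\infty$ for each fixed $y$ on a $\mathbf P$-full set depending on $y$; to get a single full set on which $U(\xi,y)<\infty$ for all $y\ge0$, use monotonicity in $y$ and take a countable dense set. Second, for the asymptotic in $(4)$ your annealed estimate is actually stronger than you state: under $\mathbb P$ the walk is i.i.d., mean zero, with finite variance, so by classical renewal theory the annealed mean overshoot $\mathbb E\big(-(y+S_{\tau_y})\big)$ is bounded uniformly in $y$ (it converges to a finite limit). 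Hence $\mathbf E\big(U(\xi,y)-y\big)=O(1)$, and your Borel--Cantelli/monotonicity argument goes through with $y_j=j^2$ already, without any appeal to the extra $\delta$.

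The genuine issue is $(5)$, and here your route diverges from \cite{HL22}. You correctly identify the obstacle: for arbitrarily slowly growing $y_n$ the annealed first-moment bound $\mathbf E\big(U(\xi,y)-y\big)=O(1)$ gives $\mathbf P\big(U(\theta^n\xi,y_n)-y_n>\varepsilon y_n\big)=O(y_n^{-1})$, which is not summable in $n$, so one must control the quenched overshoot $\mathbb E_{\theta^n\xi}\big(-(y_n+S_{\tau_{y_n}})\big)$ uniformly along the shifts. Your proposed fix---split according to the size of the crossing increment and average the environment-dependent bounds over a moving window of length $\asymp y_n^2$ via the ergodic theorem---is plausible in spirit but, as you acknowledge, not carried out; the delicate point is that the ergodic theorem gives almost-sure control of averages over windows $[0,N]$, not over arbitrary moving windows $[n,n+y_n^2]$ with $y_n$ possibly much smaller than $n$, and making this uniform is nontrivial. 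The approach of \cite{HL22} sidesteps this by a Sakhanenko-type strong approximation: with $(2+\delta)$-moments one couples the whole walk $(S_k)$ to a Brownian motion with error $o(k^{1/2})$ almost surely, and the same coupling, read from time $n$, controls the overshoot for the $\theta^n\xi$-walk at level $-y_n$ (the relevant time scale being $\asymp y_n^2$, on which the approximation error is $o(y_n)$). This is where the hypothesis $\mathbb E(|S_1|^{2+\delta})<\infty$ is really used, and it delivers the uniformity in $n$ that your sketch still owes.
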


\begin{remark}\label{Rem1}
	For the case of classical random walk, one important tool to analyse the behavior of this process conditioned to stay non-negative is the well-known Wiener-Hopf factorisation, we refer to the standard book of Feller \cite{Fel71}. For any oscillating random walk, the renewal function associated with ladder heights process is harmonic, see $\left(\ref{usual harmonic}\right)$. These methods essentially rely on the so-called duality principle which, unfortunately, failed in our setting, since the random walk is time-inhomogeneous given the environment. In  \cite{HL22}, the quenched harmonic function is obtained by strong approximation.
\end{remark}

\subsection{Random walk conditioned to stay above a line: quenched and annealed}

\subsubsection{Quenched probability $\mathbb{P}^{+,(\beta)}_{\xi}\left(x;dy\right)$ and annealed renewal measure $\mathcal{R}(dy)$  }

For any fixed $\beta\geq0$, we introduce the quenched random walk conditioned to stay above $-\beta$ for almost all $\xi$, which denote by $\zeta^{(\beta)}_{n}$, in the sense of Doob's \textit{h}-transform. By Proposition \ref{harmonic function} $\left(3\right)$, for any $n\geq1$ and $B\in\mathscr{B}(\mathbb{R})$, we can define the law of $\zeta^{(\beta)}_{n}$ by
\begin{equation}\label{Prob of conditioned rw}
	\mathbb{P}_{\xi,a}(\zeta^{(\beta)}_{0}=a):=1, ~~\mathbb{P}_{\xi,a}(\zeta^{(\beta)}_{n}\in B):=\frac{\mathbb{E}_{\xi,a}\left(U(\theta^n \xi, S_n+\beta)\mathbf{1}_{\{\tau_{\beta}>n\}}\mathbf{1}_{\left\{S_{n}\in B\right\}}\right)}{U(\xi,a+\beta)}.
\end{equation}
The process $\left(\zeta^{(\beta)}_{n},n\geq0\right)$ is called a \textit{random walk in time random environment conditioned to stay above $-\beta$}. Indeed, $\left(\zeta^{(\beta)}_{n},n\geq0\right)$ is a Markov chain with state space $[-\beta,\infty)$ and the transition kernel given by
$$\mathbb{P}^{+,(\beta)}_{\xi}\left(x;dy\right):=\frac{U(\theta\xi,y+\beta)\mathbf{1}_{\left\{y\geq -\beta\right\}}}{U(\xi,x+\beta)}\mathbb{P}_{\xi,x}\left(S_{1}\in dy\right).$$

On the other hand, $\left(S_{n},n\geq0\right)$ is a simple random walk under the annealed law $\mathbb{P}$ because the environment are i.i.d..  Recall that given environment $\xi$, $X_{n}$ has law $\mu_{n}$, let $\mathbf{E}\left(\mu_{n}\right)$ be the annealed probability of $\mu_{n}$ and $\mu^{\infty}:=\prod_{n=1}^{\infty}\mathbf{E}\left(\mu_{n}\right)$ be the product probability, denote by $\mathbb{E}_{\mu^{\infty}}$ the corresponding expectation, then $\left(S_{n},n\geq0\right)$ is an usual random walk under $\mu^{\infty}$. In the case we treat the annealed random walk (that is $\left(S_{n},n\geq0\right)$ under $\mathbb{P}_{a}$), we shall identify the law $\mathbb{P}_{a}$ with law $\mu^{\infty}_{a}$.

In what follows, we state the usual construction of classical random walk conditioned to stay above a given value, which is indicated in Remark \ref{Rem1}. Define the strict descending ladder epochs of random walk $(S_{n},n\geq0)$ as
$$\gamma_{0}:=0,~ \gamma_{k+1}:=\inf\left\{n>\gamma_{k}:S_{n}< S_{\gamma_{k}}\right\},~ k\geq0.$$
Let $R^{-}$ be the function associated with $(S_{n},n\geq0)$ that defined by
$$R^{-}(0):=1, ~R^{-}(x):=\sum_{k=0}^{\infty}\mu^{\infty}(S_{\gamma_{k}}\geq -x),~x>0.$$
$R^{-}(x)$ is a renewal function of the ladder heights $(-S_{\gamma_{k}})$. Let $\mathcal{R}^{-}(dx)$ be the corresponding renewal measure. By the renewal theorem (cf. \cite{Fel71}, Chapter XI, Section 1), in our setting, we have
\begin{equation}\nonumber
	\lim_{x\to \infty}\frac{R^{-}(x)}{x}=c_{0}\in(0,\infty).
\end{equation}
The function $R^{-}(x)$ satisfies (cf. Lemma 1 of \cite{Tan89})
\begin{equation}\label{usual harmonic}
	\mu^{\infty}\left[R^{-}(x+X_{1})1_{\left\{x+X_{1}\geq 0\right\}}\right]=R^{-}(x),~~\text{for}~ x\geq0.
\end{equation}
From $\left(\ref{usual harmonic}\right)$, it follows that $(R^{-}(S_{n}+\beta)1_{\left\{\underline{S}_{n}\geq -\beta\right\}},n\geq1)$ is a martingale under $\mu^{\infty}$, where $\underline{S}_{n}:=\min\left\{S_{0},\cdots,S_{n}\right\}$. Thus, we can construct the random walk conditioned to stay above $-\beta$, which denote by $\eta^{(\beta)}_{n}$, that is, for any $B\in\mathscr{B}(\mathbb{R})$,
\begin{equation}\nonumber
	\mu^{\infty}_{a}(\eta^{(\beta)}_{n}\in B):=\frac{\mu^{\infty}_{a}[R^{-}(S_{n}+\beta)\mathbf{1}_{\left\{\underline{S}_{n}\geq -\beta\right\}}\mathbf{1}_{\left\{S_{n}\in B\right\}}]}{R^{-}(a+\beta)}.
\end{equation}

Similarly, define the weak ascending ladder epochs of random walk $(S_{n},n\geq0)$ as
$$\Gamma_{0}:=0,~ \Gamma_{k+1}:=\inf\left\{n>\Gamma_{k}:S_{n}\geq S_{\Gamma_{k}}\right\},~ k\geq0.$$
Let $R(x)$ be the renewal function associated with the weak ascending ladder height process $\left(S_{\Gamma_{n}},n\geq1\right)$, i.e.,
$$R(0):=1, ~R(x):=\sum_{n=1}^{\infty}\mu^{\infty}(S_{\Gamma_{n}}< x), ~x>0,$$
and denote by $\mathcal{R}(dx)$ the corresponding renewal measure. Define $R^{(\beta)}(x):=R(x+\beta)$ with the corresponding measure $\mathcal{R}^{(\beta)}(dx)$. By the renewal theorem again, there exists $c_{1},c_{2}>0$ such that for any non-negative measurable function $f$,
\begin{equation}\label{relation of renewal measure with L measure}
	c_{1}\int_{0}^{\infty}f(x-\beta)\,dx\leq \int_{-\beta}^{\infty}f(x)\mathcal{R}^{(\beta)}(dx)\leq c_{2}\int_{0}^{\infty}f(x-\beta)\,dx.
\end{equation}

We give the following lemma, which allows us to express the expectation of the series of the form $\sum_{n=1}^{\infty}\frac{G\left(\theta^n\xi,\zeta^{(\beta)}_{n}\right)}{U(\theta^n\xi,\zeta^{(\beta)}_{n}+\beta)}$ (where $G\left(\theta^n\xi,x\right)$ is a non-negative measurable function depending on the $n$-step shifted environment $\theta^n\xi$) under annealed probability $\mathbb{P}$ in the form of the integral with respect to $\mathcal{R}^{(\beta)}(dx)$.
\begin{lemma}\label{lem the integral of series of zeta_{n}}
	Let $\zeta^{(\beta)}_{n}$ be defined as $\left(\ref{Prob of conditioned rw}\right)$, for almost all $\xi$, $G\left(\xi,\cdot\right):[-\beta,\infty)\to[0,\infty)$ be a measurable function, $G\left(x\right):=\mathbf{E}\left[G\left(\xi,x\right)\right]<\infty$ for all $x\geq -\beta$, then\footnote{Here we have not included the starting point $\zeta^{(0)}_{0}$ in the summation term, with regard  the definition of renewal function $R(x)$.}
	\begin{equation}\label{the integral of series of zeta_{n}}
		\mathbb{E}\left[\sum_{n=1}^{\infty}\frac{G\left(\theta^n\xi,\zeta^{(\beta)}_{n}\right)}{U(\theta^n\xi,\zeta^{(\beta)}_{n}+\beta)}U\left(\xi,\beta\right)\right]=\int_{-\beta}^{\infty}G(x)\mathcal{R}^{(\beta)}(dx).
	\end{equation}
\end{lemma}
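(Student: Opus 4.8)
The plan is to unfold the $h$-transform definition of $\zeta^{(\beta)}_n$ given in $(\ref{Prob of conditioned rw})$ and recognize that the left-hand side collapses into an unconditioned random-walk expectation with a ladder-epoch constraint. First I would fix a realization of the environment $\xi$ and compute, for each $n\geq1$,
$$\mathbb{E}_{\xi}\left[\frac{G\left(\theta^n\xi,\zeta^{(\beta)}_{n}\right)}{U(\theta^n\xi,\zeta^{(\beta)}_{n}+\beta)}U(\xi,\beta)\right]=\mathbb{E}_{\xi}\left[G\left(\theta^n\xi,S_n\right)\mathbf{1}_{\{\tau_{\beta}>n\}}\right],$$
which is immediate from $(\ref{Prob of conditioned rw})$: the harmonic weight $U(\theta^n\xi,S_n+\beta)$ and the normalizer $U(\xi,\beta)$ cancel against the denominator $U(\theta^n\xi,S_n+\beta)$ and the $h$-transform normalization, leaving exactly the event $\{\tau_\beta>n\}$ that $S$ stays above $-\beta$ up to time $n$. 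Summing over $n\geq1$ and using Tonelli (everything is non-negative), the left-hand side of $(\ref{the integral of series of zeta_{n}})$ becomes $\mathbb{E}\left[\sum_{n=1}^{\infty}G(\theta^n\xi,S_n)\mathbf{1}_{\{\tau_\beta>n\}}\right]$, where now the outer expectation is the full annealed $\mathbb{E}=\mathbb{E}_{\xi}\otimes\mathbf{P}$.

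Next I would exploit the i.i.d.\ structure of the environment to pass to the product measure $\mu^\infty$. Because $\theta^n\xi$ has the same law as $\xi$ under $\mathbf{P}$ and, crucially, is independent of $(X_1,\dots,X_n)$ (those depend only on $\xi_1,\dots,\xi_n$, equivalently $S_1,\dots,S_n$ are governed by $\mu_1,\dots,\mu_n$), the annealed average of $G(\theta^n\xi,S_n)$ given $(S_1,\dots,S_n)$ equals $G(S_n):=\mathbf{E}[G(\xi,S_n)]$. Hence
$$\mathbb{E}\left[\sum_{n=1}^{\infty}G(\theta^n\xi,S_n)\mathbf{1}_{\{\tau_\beta>n\}}\right]=\mathbb{E}_{\mu^\infty}\left[\sum_{n=1}^{\infty}G(S_n)\mathbf{1}_{\{\underline S_n\geq -\beta\}}\right],$$
using that under the annealed law $(S_n)$ is a genuine random walk with step law $\mu^\infty$, and that $\{\tau_\beta>n\}=\{\underline S_n\geq -\beta\}$ (with $S_0=0\geq -\beta$).

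Finally, the identity $\mathbb{E}_{\mu^\infty}\left[\sum_{n\geq1}G(S_n)\mathbf{1}_{\{\underline S_n\geq -\beta\}}\right]=\int_{-\beta}^{\infty}G(x)\,\mathcal{R}^{(\beta)}(dx)$ is the classical renewal-theoretic identity for the random walk conditioned (via the descending ladder structure) to stay above $-\beta$: decomposing a path staying above $-\beta$ into its weak ascending ladder epochs $\Gamma_k$ and applying the Markov property shows the occupation measure $\sum_{n\geq1}\mathbf{P}_{\mu^\infty}(S_n\in dx,\underline S_n\geq -\beta)$ coincides with $\mathcal{R}(x+\beta)$ shifted, i.e.\ with $\mathcal{R}^{(\beta)}(dx)$ on $[-\beta,\infty)$; this is the standard fact underlying the construction of $\eta^{(\beta)}_n$ recalled above (a Tanaka/Bertoin--Doney type identity, cf.\ \cite{Tan89}). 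One must take a little care with the convention flagged in the footnote — the sum starts at $n=1$, matching $R(0)=1$ being excluded from the renewal series $R(x)=\sum_{n\geq1}\mu^\infty(S_{\Gamma_n}<x)$.

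The main obstacle is the second step: justifying cleanly that averaging $G(\theta^n\xi,\cdot)$ over the environment and combining with the quenched random walk yields exactly $\mathbb{E}_{\mu^\infty}$ with step law $\mu^\infty$. One has to be careful that $\mu_n$ itself is $\xi_n$-measurable and random, so $X_n$ given $\xi$ has law $\mu_n$ but its annealed law is $\mathbf{E}(\mu_n)$, and $\theta^n\xi$ is independent of $\sigma(\xi_1,\dots,\xi_n)$ hence of $(X_1,\dots,X_n)$; spelling out this conditional independence rigorously, and confirming that $\mathbf{E}[G(\xi,x)]=G(x)$ is finite so Tonelli/Fubini applies, is where the real content lies. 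Everything else is a bookkeeping of the $h$-transform cancellation and an appeal to the standard renewal identity already invoked in the text for $\eta^{(\beta)}_n$.
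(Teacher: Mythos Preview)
Your proposal is correct and follows essentially the same approach as the paper's proof: unfold the $h$-transform to cancel the $U$ factors and reduce to $\mathbb{E}_{\xi}[G(\theta^n\xi,S_n)\mathbf{1}_{\{\tau_\beta>n\}}]$, then use the independence of $\theta^n\xi$ from $(\xi_1,\dots,\xi_n)$ under the i.i.d.\ environment to pass to the annealed occupation measure, and finally invoke the classical renewal identity $\sum_{n\geq1}\mu^\infty(S_n\in dx,\underline S_n\geq-\beta)=\mathcal{R}^{(\beta)}(dx)$ (the paper cites Biggins \cite{Big03} for this last step rather than Tanaka). Your identification of the independence step as the only place requiring care matches exactly how the paper handles it.
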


\begin{proof}
	By the duality principle for classical random walk, following the arguments of Section 2 and 6 of \cite{Big03}, for any non-negative measurable function $f$, we get
	\begin{equation}\label{intergral of R}
		\sum_{n=1}^{\infty}\mathbb{E}_{\mu^{\infty}}\left(f\left(S_{n}\right)\mathbf{1}_{\left\{\underline{S}_{n}\geq -\beta\right\}}\right)=\int_{-\beta}^{\infty}f(x)\mathcal{R}^{(\beta)}(dx).
	\end{equation}
	Then, by the definition of $\zeta^{(\beta)}_{n}$, we obtain
	\begin{equation}\nonumber
		\begin{aligned}
			\mathbb{E}\left[\sum_{n=1}^{\infty}\frac{G\left(\theta^n\xi,\zeta^{(\beta)}_{n}\right)}{U(\theta^n\xi,\zeta^{(\beta)}_{n}+\beta)}U\left(\xi,\beta\right)\right]=&\mathbf{E}\left[\sum_{n=1}^{\infty}\mathbb{E}_{\xi}\left(\frac{G\left(\theta^n\xi,\zeta^{(\beta)}_{n}\right)}{U(\theta^n\xi,\zeta^{(\beta)}_{n}+\beta)}\right)U\left(\xi,\beta\right)\right]\\
			=&\mathbf{E}\left[\sum_{n=1}^{\infty}\mathbb{E}_{\xi}\left(\frac{U(\theta^n\xi,S_n+\beta)\mathbf{1}_{\{\tau_{\beta}>n\}}G\left(\theta^n\xi,S_{n}\right)}{U(\theta^n\xi,S_{n}+\beta)U(\xi,\beta)}\right)U\left(\xi,\beta\right)\right]\\
			=&\mathbf{E}\left[\sum_{n=1}^{\infty}\mathbb{E}_{\xi}\left(\mathbf{1}_{\{\tau_{\beta}>n\}}G\left(\theta^n\xi,S_{n}\right)\right)\right]\\
			=&\mathbf{E}\left[\sum_{n=1}^{\infty}\int_{-\beta}^{\infty}G\left(\theta^n\xi,x\right)\mathbb{P}_{\xi}\left(S_{n}\in dx,\tau_{\beta}>n\right)\right].
		\end{aligned}
	\end{equation}
    Due to the i.i.d. random environment, for all fixed $x$, $G\left(\theta^n\xi,x\right)$ is a stationary and ergodic sequence (see, e.g. \cite{Kal02}, Lemmas 10.1 and 10.5), $\mathbf{E}\left[G\left(\theta^n\xi,x\right)\right]=G(x)$, and by the independence, we deduce
    \begin{equation}\nonumber
    	\begin{aligned}
    		\mathbf{E}\left[\sum_{n=1}^{\infty}\int_{-\beta}^{\infty}G\left(\theta^n\xi,x\right)\mathbb{P}_{\xi}\left(S_{n}\in dx,\tau_{\beta}>n\right)\right]=&\sum_{n=1}^{\infty}\int_{-\beta}^{\infty}\mathbf{E}\left[G\left(\theta^n\xi,x\right)\mathbb{P}_{\xi}\left(S_{n}\in dx,\tau_{\beta}>n\right)\right]\\
    		=&\int_{-\beta}^{\infty}G(x)\sum_{n=1}^{\infty}\mathbb{P}\left(S_{n}\in dx,\tau_{\beta}>n\right)\\
    		=&\int_{-\beta}^{\infty}G(x)\sum_{n=1}^{\infty}\mu^{\infty}\left(S_{n}\in dx,\underline{S}_{n}\geq -\beta\right)\\
    		=&\int_{-\beta}^{\infty}G(x)\mathcal{R}^{(\beta)}(dx),
    	\end{aligned}
    \end{equation}
    where the last equality follows from $\left(\ref{intergral of R}\right)$. Hence, this yields the lemma.
\end{proof}

\subsubsection{Quenched Tanaka's decomposition }

Tanaka's decomposition is an important tool for investigating the behavior of the random walk conditioned to stay non-negative, see \cite{Tan89,Big03,AGKV05} for example. In our context, with the 
preparations above, now we can specify a quenched version of Tanaka's decomposition for the RWRE conditioned to stay non-negative. And discuss the relationship between the two kind of  probability. For simplicity, we only consider $\beta=0$ and write $\zeta_{n}:=\zeta^{(0)}_{n}$. Let $\nu$ be the time of the first prospective minimal value of the process $(\zeta_{n},n\geqslant0)$, i.e.,
\begin{equation}\label{prospective minimal}
	\nu:=\inf\left\{m\geq1:\zeta_{m+n}\geq \zeta_{m}, ~\text{for all}~ n\geq0\right\}.
\end{equation}
Write $\zeta^{\nu}_{k}:=\zeta_{\nu+k}-\zeta_{\nu}$, $k\geq1$.
\begin{proposition}[Quenched Tanaka's decomposition]\label{Tanaka decomposition}
	For almost all $\xi$, we have
	
	$\left(1\right)$ $\zeta_{n}\to\infty$ $\mathbb{P}_{\xi}\text{-a.s.}$ as $n\to\infty$.
	
	$\left(2\right)$ $\left(\nu,\zeta_{1},\cdots,\zeta_{\nu}\right)$ and $\left(\zeta^{\nu}_{1},\zeta^{\nu}_{2},\cdots\right)$ are independent with respect to $\mathbb{P}_{\xi}$.
		
	$\left(3\right)$ $U(\xi,0)\mathbb{P}_{\xi}\left(\nu=k,\zeta_{\nu}\in dx\right)=U\left(\theta^k\xi,0\right)\mathbb{P}_{\xi}\left(S_{k}< S_{k-1},\cdots,S_{k}< S_{1},S_{k}\in dx\right)$ for all $k\geq1$.
\end{proposition}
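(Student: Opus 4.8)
The plan is to reduce everything, as the statement already does, to $\beta=0$ and to read off all three assertions from a single path decomposition of $(\zeta_n)_{n\ge 0}$ at the time $\nu$ of its first prospective minimum, mimicking the classical Tanaka decomposition (\cite{Tan89,Big03,AGKV05}) but carrying the environment shift $\theta^n\xi$ through every step. The logical order I would follow is: first (1) (transience, which is what makes $\nu$ finite and is also used in (3)), then the path decomposition, which yields (3), and finally (2) as a byproduct of that decomposition.

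\medskip
\noindent\textbf{Part (1).} Under $\mathbb{P}_\xi$ the increments $X_n$ of $S$ have mean zero (apply \eqref{condition for quenched harmonic function} to $\theta^{n-1}\xi$) and their quenched variances are non-degenerate with positive annealed mean, so for a.e.\ $\xi$ the walk $(S_n)$ is a mean-zero $\mathbb{P}_\xi$-martingale whose increment variances sum to $\infty$ at linear rate; a quenched CLT (Lindeberg from \eqref{2 more moment}) rules out convergence, so $(S_n)$ oscillates, $\limsup S_n=+\infty$, $\liminf S_n=-\infty$, $\mathbb{P}_\xi$-a.s. In particular $\tau_\beta<\infty$ a.s. For the conditioned walk I would first note that $\bigl(1/U(\theta^n\xi,\zeta_n)\bigr)_{n\ge 0}$ is a non-negative supermartingale under the conditioned law (a one-line computation with the transition kernel: it contributes a factor $\mathbb{P}_{\theta^{n-1}\xi,\zeta_{n-1}}(S_1\ge 0)\le 1$), hence converges a.s., and then rule out the event $\{\,\zeta_n\in[0,K]$ infinitely often$\,\}$ for each $K$. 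For this I would apply Lemma \ref{lem the integral of series of zeta_{n}} with the truncated test function $G(\xi,x)=\mathbf{1}_{\{x\in[0,K]\}}\mathbf{1}_{\{U(\xi,x)\le M\}}$, which forces $\sum_n\mathbf{1}_{\{\zeta_n\in[0,K],\,U(\theta^n\xi,\zeta_n)\le M\}}<\infty$ $\mathbb{P}_\xi$-a.s., and combine it with the observation (ergodic theorem for the stationary sequence $n\mapsto U(\theta^n\xi,K)$, which has finite mean by \eqref{2 more moment}) that $U(\theta^n\xi,K)\le M$ for a positive-density set of $n$ once $M$ is large. \emph{This is the step I expect to be the main obstacle}: $\zeta_n$ and the shifted environment $\theta^n\xi$ are strongly coupled under $\mathbb{P}_\xi$, so one has to show $(\zeta_n)$ cannot visit $[0,K]$ only at the (path-chosen, possibly sparse) times where $U(\theta^n\xi,\cdot)$ happens to be atypically large; closing this gap is where the quenched tail/regularity estimates for $U$ from \cite{HL22} are really needed.

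\medskip
\noindent\textbf{Part (3).} Since part (1) gives $\zeta_n\to\infty$, the overall minimum $\zeta_\nu=\min_{j\ge 1}\zeta_j$ is attained and $k=\nu$ is the first time, so
$$\{\nu=k,\ \zeta_\nu\in dx\}=\{\zeta_j>x,\ 1\le j\le k-1\}\cap\{\zeta_k\in dx\}\cap\{\zeta_{k+n}\ge x\ \forall n\ge 1\}.$$
Inserting the $h$-transform density \eqref{Prob of conditioned rw} for the first $k$ coordinates and using the Markov property of $\zeta$ at time $k$ (whose post-$k$ part is the conditioned walk started at $x$ in the shifted environment $\theta^k\xi$), the pre-$k$ factor collapses after integrating out $\zeta_1,\dots,\zeta_{k-1}$ to
$$\mathbb{P}_\xi(\nu=k,\zeta_\nu\in dx)=\frac{U(\theta^k\xi,x)}{U(\xi,0)}\,q(\theta^k\xi,x)\,\mathbb{P}_\xi\bigl(S_j>x,\ 1\le j\le k-1;\ S_k\in dx\bigr),$$
where $q(\eta,x):=\mathbb{P}^{+}_{\eta,x}(\zeta_n\ge x\ \forall n\ge 1)$. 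Two facts finish it. First, on $\{S_k\in dx\}$ the event $\{S_j>x\ \forall j<k\}$ is literally $\{S_k<S_j\ \forall j<k\}$, so the $\mathbb{P}_\xi$-factor is already the one in the claim. Second — the computational heart — $q(\eta,x)=U(\eta,0)/U(\eta,x)$: conditioning the $\{\ge 0\}$-conditioned walk started at $x$ to stay $\ge x$ is, after shifting coordinates down by $x$, again the $\{\ge 0\}$-conditioned walk started at $0$ (the relevant harmonic function being $y\mapsto U(\eta,y-x)/U(\eta,y)$), and passing to the limit $n\to\infty$ in $\mathbb{P}^{+}_{\eta,x}(\zeta_j\ge x,\,1\le j\le n)=\mathbb{E}_{\eta,x}\bigl(U(\theta^n\eta,S_n)\mathbf{1}_{\{S_j\ge x\,\forall j\le n\}}\bigr)/U(\eta,x)$ gives $U(\eta,0)/U(\eta,x)$, using $U(\theta^n\eta,S_n)/U(\theta^n\eta,S_n-x)\to 1$ on $\{S_n\to\infty\}$ (Proposition \ref{harmonic function}(5)), transience from part (1), and a uniform-integrability bound from the $U$-asymptotics. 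Substituting $q(\theta^k\xi,x)=U(\theta^k\xi,0)/U(\theta^k\xi,x)$ cancels $U(\theta^k\xi,x)$, and multiplying by $U(\xi,0)$ gives exactly (3).

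\medskip
\noindent\textbf{Part (2).} The same decomposition, written out \emph{before} integrating out $\zeta_1,\dots,\zeta_{k-1}$, exhibits the joint law of $(\nu=k,\zeta_1,\dots,\zeta_k)$ and the shifted future $(\zeta^\nu_j)_{j\ge 1}$ as a product: a factor depending only on $(k,\zeta_1,\dots,\zeta_k)$ — namely $\mathbf{1}_{\{\zeta_j>\zeta_k\,\forall j<k\}}\,\mathbb{P}_\xi(\zeta_1\in d\cdot,\dots,\zeta_k\in d\cdot)\,q(\theta^k\xi,\zeta_k)$ — times $\mathbb{P}^{+}_{\theta^k\xi,0}\bigl((\zeta^\nu_j)_j\in\cdot\bigr)$, the last factor coming from the $q$-computation (on $\{\nu=k,\zeta_\nu=x\}$ the process $(\zeta_{k+j}-x)_{j\ge1}$ is the $\{\ge 0\}$-conditioned walk started at $0$ in environment $\theta^k\xi$). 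This factorization is precisely the independence asserted in (2); I would note that the future's conditional law given $\nu=k$ is the conditioned walk in $\theta^k\xi$, so the cleanest reading of the statement is independence of $(\nu,\zeta_1,\dots,\zeta_\nu)$ and $(\zeta^\nu_1,\zeta^\nu_2,\dots)$ given $\nu$, which is what the displayed product delivers. Beyond bookkeeping, the only genuine work here is, again, the uniform integrability justifying the limit in the $q$-computation and the transience input from part (1).
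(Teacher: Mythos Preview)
Your arguments for parts (2) and (3) are essentially the paper's. The identity you call $q(\eta,x)=U(\eta,0)/U(\eta,x)$ is exactly what the paper proves, but the paper's route is cleaner and avoids your uniform-integrability worry: it observes that $H(\xi,x,z):=U(\xi,x-z)/U(\xi,x)$ (your $q$ with general $z$) is harmonic for the \emph{conditioned} transition kernel $\mathbb{P}^+_\xi$ and satisfies $0\le H\le 1$; hence $H(\theta^{n\wedge\hat\tau_z}\xi,\zeta_{n\wedge\hat\tau_z},z)$ is a \emph{bounded} martingale, and dominated convergence plus Proposition~\ref{harmonic function}(5) and part~(1) give $\mathbb{P}_{\xi,x}(\zeta_n\ge z\ \forall n)=H(\xi,x,z)$ directly. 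The density factorization for (2) and the calculation for (3) are then one line each, as you outline. (Your caveat that the post-$\nu$ law is the conditioned walk in the \emph{shifted} environment $\theta^k\xi$ is correct and is exactly how the paper writes the product; the ``independence'' is best read as a factorization on each $\{\nu=k\}$.)

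Part (1) is where your proposal has a genuine gap --- and you flag it yourself. The coupling problem you describe (ruling out that $\zeta_n$ visits $[0,K]$ only at times where $U(\theta^n\xi,\cdot)$ is atypically large) is real for your approach, and nothing in \cite{HL22} is cited in the paper to close it. The paper sidesteps the whole issue with a much simpler annealed Borel--Cantelli argument: by monotonicity of $U(\cdot,\cdot)$ in the second variable,
\[
\mathbf{E}\Bigl[U(\xi,0)\sum_{n\ge 1}\mathbb{P}_\xi(\zeta_n<y)\Bigr]
=\sum_{n\ge 1}\mathbf{E}\bigl[\mathbb{E}_\xi\bigl(U(\theta^n\xi,S_n)\mathbf{1}_{\{\tau_0>n,\,S_n<y\}}\bigr)\bigr]
\le \sum_{n\ge 1}\mathbf{E}\bigl[U(\theta^n\xi,y)\,\mathbb{P}_\xi(\tau_0>n,\,S_n<y)\bigr].
\]
Since $U(\theta^n\xi,y)$ depends only on $(\xi_{n+1},\xi_{n+2},\dots)$ while the $\mathbb{P}_\xi$-probability depends only on $(\xi_1,\dots,\xi_n)$, the expectation splits, giving $U(y)\sum_n\mu^\infty(S_n<y,\ \underline S_n\ge 0)=U(y)\,R(y)<\infty$ (the paper first checks $U(y):=\mathbf{E}[U(\xi,y)]<\infty$ via $U(0)=-\mathbb{E}_{\mu^\infty}(S_{\gamma_1})<\infty$ and the annealed harmonic equation). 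So $\sum_n\mathbb{P}_\xi(\zeta_n<y)<\infty$ for a.e.\ $\xi$, and Borel--Cantelli finishes. No supermartingale for $1/U$, no ergodic argument, no worry about the $\zeta_n$--$\theta^n\xi$ coupling: the monotonicity bound decouples them at the cost of one factor $U(y)$.
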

	
\begin{proof}
	$\left(1\right)$ We claim that $U(y):=\mathbf{E}\left[U\left(\xi,y\right)\right]<\infty$ for any $y\geq0$. In fact, by the definition of $U\left(\xi,y\right)$, we have
	\begin{equation}\nonumber
		\begin{aligned}
			U(0)=\mathbf{E}\left[\mathbb{E}_{\xi}\left(-S_{\tau_{0}}\right)\right]=&\mathbf{E}\left[\sum_{n=1}^{\infty}\mathbb{E}_{\xi}\left(-S_{n},\tau_{0}=n\right)\right]\\
			=&\mathbf{E}\left[\sum_{n=1}^{\infty}\mathbb{E}_{\xi}\left(-S_{n},S_{1}\geq 0,\cdots,S_{n-1}\geq 0,S_{n}<0\right)\right]\\
			=&\sum_{n=1}^{\infty}\mathbb{E}_{\mu^{\infty}}\left(-S_{n},S_{1}\geq 0,\cdots,S_{n-1}\geq 0,S_{n}<0\right)\\
			=&\sum_{n=1}^{\infty}\mathbb{E}_{\mu^{\infty}}\left(-S_{n},\gamma_{1}=n\right)\\
			=&-\mathbb{E}_{\mu^{\infty}}\left(S_{\gamma_{1}}\right)\\
			<&\infty,
		\end{aligned}
	\end{equation}
	where the finiteness of $\mathbb{E}_{\mu^{\infty}}\left(S_{\gamma_{1}}\right)$ is valid by Theorem 1 in Chapter XVIII.5 of \cite{Fel71}. And by Proposition \ref{harmonic function} $\left(2\right)$, $U(y)$ satisfies the annealed harmonic property:
	\begin{equation}\label{annealed harmonic}
		\mathbb{E}\left[U\left(y+S_{1}\right)\mathbf{1}_{\left\{y+S_{1}\geq 0\right\}}\right]=U(y), ~~y\geq0.
	\end{equation}
	Since $\mathbb{P}\left(S_{1}>y_{0}\right)>0$ for some $y_{0}>0$. Then, by $\left(\ref{annealed harmonic}\right)$ with $y=0$, we have $U(y_{1})<\infty$ for some $y_{1}>y_{0}$. By again $\left(\ref{annealed harmonic}\right)$ with $y=y_{1}$, $U(y_{2})<\infty$ for some $y_{2}>y_{1}+y_{0}$. Repeating this argument, it follows that there exists a sequence $\left(y_{n},n\geq1\right)$ such that $U\left(y_{n}\right)<\infty$ for all $n$. By the monotonicity of $U(y)$, we conclude that $U(y)<\infty$ for all $y\geq0$.
	
	Since $U(\xi,y)$ is positive, non-decreasing in $y$, for any $y>0$, by the definition of $\zeta_{n}$, we have
	\begin{equation}\nonumber
		\begin{aligned}
			\mathbf{E}\left[U\left(\xi,0\right)\sum_{n=1}^{\infty}\mathbb{P}_{\xi}\left(\zeta_{n}<y\right)\right]=&\mathbf{E}\left[\sum_{n=1}^{\infty}\frac{\mathbb{E}_{\xi}\left(U(\theta^n\xi,S_n)\mathbf{1}_{\{\tau_{0}>n\}}\mathbf{1}_{\left\{S_{n}<y\right\}}\right)}{U(\xi,0)}U\left(\xi,0\right)\right]\\
			\leq&\sum_{n=1}^{\infty}\mathbf{E}\left[U(\theta^n\xi,y)\mathbb{E}_{\xi}\left(\mathbf{1}_{\{\tau_{0}>n\}}\mathbf{1}_{\left\{S_{n}<y\right\}}\right)\right].
		\end{aligned}
	\end{equation}
	Following the same argument as the proof of Lemma \ref{lem the integral of series of zeta_{n}}, we have
	\begin{equation}\nonumber
		\begin{aligned}
			\sum_{n=1}^{\infty}\mathbf{E}\left[U(\theta^n\xi,y)\mathbb{E}_{\xi}\left(\mathbf{1}_{\{\tau_{0}>n\}}\mathbf{1}_{\left\{S_{n}<y\right\}}\right)\right]=&\sum_{n=1}^{\infty}U(y)\mathbb{P}\left(S_{n}<y,\tau_{0}>n\right)\\
			=&U(y)\sum_{n=1}^{\infty}\mu^{\infty}\left(S_{n}<y,\underline{S}_{n}\geq0\right)\\
			=&U(y)\int_{0}^{y}\mathcal{R}^{(0)}(dx)\\
			<&\infty.
		\end{aligned}
	\end{equation}
	Thus, for almost all $\xi$, $\sum_{n=1}^{\infty}\mathbb{P}_{\xi}\left(\zeta_{n}<y\right)<\infty$. The Borel-Cantelli lemma yields that, for almost all $\xi$, $\zeta_{n}\to\infty$ $\mathbb{P}_{\xi}\text{-a.s.}$ as $n\to\infty$.
	
	$\left(2\right)$ Let
	$$H\left(\xi,x,z\right):=\frac{U\left(\xi,x-z\right)}{U\left(\xi,x\right)}, ~~x\geq z\geq0, ~~~~H\left(\xi,x,z\right):=0, ~~x<z,$$
	and $$\mathbb{P}^+_{\theta^{j-1}\xi}\left(y_{j-1};dy_{j}\right):=\mathbb{P}_{\xi}\left.\left(\zeta_{j}\in dy_{j}~\right| \zeta_{j-1}=y_{j-1}\right)=\frac{U(\theta^j\xi,y_{j})\mathbf{1}_{\left\{y_{j}\geq 0\right\}}}{U(\theta^{j-1}\xi,y_{j-1})}\mathbb{P}_{\theta^{j-1}\xi,y_{j-1}}\left(S_{1}\in dy_{j}\right), j\geq1.$$ Then, by Proposition \ref{harmonic function} $\left(2\right)$, $H\left(\xi,\cdot,z\right)$ is quenched harmonic with respect to the transition kernel $\mathbb{P}^+_{\xi}$ in the following sense:
	\begin{equation}\label{quenched harmonic 2}
		\int H\left(\theta\xi,y,z\right)\mathbb{P}^+_{\xi}\left(x;dy\right)=H\left(\xi,x,z\right), ~~x\geq z\geq0.
	\end{equation}
	Define $\hat{\tau}_{z}:=\inf\left\{n>0:\zeta_{n}< z\right\}$. $\hat{\tau}_{z}$ is a stopping time, it follows from $\left(\ref{quenched harmonic 2}\right)$ that the process $\left(H\left(\theta^{n\wedge\hat{\tau}_{z}}\xi,\zeta_{n\wedge\hat{\tau}_{z}},z\right),n\geq 0\right)$ is a martingale under $\mathbb{P}_{\xi}$. Thus, for all $n\geq 0$, we have
	$$\mathbb{E}_{\xi,x}\left[H\left(\theta^{n\wedge\hat{\tau}_{z}}\xi,\zeta_{n\wedge\hat{\tau}_{z}},z\right) \right]=H\left(\xi,x,z\right).$$
	Note that for almost all $\xi$, $0\leq H\left(\xi,x,z\right)\leq1$, $H\left(\xi,x,z\right):=0$ for $x<z$, and $H\left(\theta^n\xi,\zeta_{n},z\right)\to1$ as $n\to\infty$ by $\left(\ref{asymptotic behaviour of harmonic function 2}\right)$ and $\left(1\right)$. It follows that $$\lim_{n\to\infty}H\left(\theta^{n\wedge\hat{\tau}_{z}}\xi,\zeta_{n\wedge\hat{\tau}_{z}},z\right)=\lim_{n\to\infty}H\left(\theta^{n}\xi,\zeta_{n},z\right)\mathbf{1}_{\left\{\hat{\tau}_{z}>n\right\}}=\mathbf{1}_{\left\{\zeta_{n}\geq z~\text{for all}~ n>0\right\}}.$$
	By the dominated convergence theorem, we get
	$$\mathbb{P}_{\xi,x}\left(\zeta_{n}\geq z ~\text{for all}~ n>0\right)=\mathbb{E}_{\xi,x}\left[\lim_{n\to\infty}H\left(\theta^{n\wedge\hat{\tau}_{z}}\xi,\zeta_{n\wedge\hat{\tau}_{z}},z\right) \right]=H\left(\xi,x,z\right).$$
	This tell us that $H\left(\xi,x,z\right)$ is the probability that, starting at $x$, the process $\left(\zeta_{n},n\geq0\right)$ never hits $\left(-\infty,z\right)$.
	
	For any $x_{1},\cdots,x_{k},y_{1},\cdots,y_{m}\geq 0$ with $x_{0}=y_{0}=0$, we have
	\begin{equation}\nonumber
		\begin{aligned}
			&\prod_{j=1}^{m}\mathbb{P}^+_{\theta^{k+j-1}\xi}\left(y_{j-1}+x_{k};dy_{j}+x_{k}\right)H\left(\theta^{k+m}\xi,y_{m}+x_{k},x_{k}\right)\\
			=&\prod_{j=1}^{m}\mathbb{P}^+_{\theta^{k+j-1}\xi}\left(y_{j-1}+x_{k};dy_{j}+x_{k}\right)\frac{U\left(\theta^{k+m}\xi,y_{m}\right)}{U\left(\theta^{k+m}\xi,y_{m}+x_{k}\right)}\\
			=&\prod_{j=1}^{m}\mathbb{P}_{\theta^{k+j-1}\xi}\left(y_{j-1}+x_{k};dy_{j}+x_{k}\right)\frac{U\left(\theta^{k+j}\xi,y_{j}+x_{k}\right)}{U\left(\theta^{k+j-1}\xi,y_{j-1}+x_{k}\right)}\frac{U\left(\theta^{k+m}\xi,y_{m}\right)}{U\left(\theta^{k+m}\xi,y_{m}+x_{k}\right)}\\
			=&\prod_{j=1}^{m}\mathbb{P}_{\theta^{k+j-1}\xi}\left(y_{j-1};dy_{j}\right)\frac{U\left(\theta^{k+m}\xi,y_{m}\right)}{U\left(\theta^{k}\xi,x_{k}\right)}\\
			=&\prod_{j=1}^{m}\mathbb{P}_{\theta^{k+j-1}\xi}\left(y_{j-1};dy_{j}\right)\frac{U\left(\theta^{k+j}\xi,y_{j}\right)}{U\left(\theta^{k+j-1}\xi,y_{j-1}\right)}\frac{U\left(\theta^{k}\xi,0\right)}{U\left(\theta^{k}\xi,x_{k}\right)}\\
			=&\prod_{j=1}^{m}\mathbb{P}^+_{\theta^{k+j-1}\xi}\left(y_{j-1};dy_{j}\right)H\left(\theta^{k}\xi,x_{k},x_{k}\right).
		\end{aligned}
	\end{equation}
	As a result,
	\begin{equation}\nonumber
		\begin{aligned}
			&\mathbb{P}_{\xi}\left(\nu=k,\zeta_{1}\in dx_{1},\cdots,\zeta_{k}\in dx_{k},\zeta^{\nu}_{1}\in dy_{1},\cdots,\zeta^{\nu}_{m}\in dy_{m}\right)\\
			=&\mathbf{1}_{\left\{x_{1},\cdots,x_{k-1}>x_{k}\right\}}\mathbf{1}_{\left\{y_{1},\cdots,y_{m}\geq0\right\}}\prod_{i=1}^{k}\mathbb{P}^+_{\theta^i\xi}\left(x_{i-1};dx_{i}\right)\\
			&\times\prod_{j=1}^{m}\mathbb{P}^+_{\theta^{k+j-1}\xi}\left(y_{j-1}+x_{k};dy_{j}+x_{k}\right)H\left(\theta^{k+m}\xi,y_{m}+x_{k},x_{k}\right)\\
			=&\mathbf{1}_{\left\{x_{1},\cdots,x_{k-1}>x_{k}\right\}}\prod_{i=1}^{k}\mathbb{P}^+_{\theta^i\xi}\left(x_{i-1};dx_{i}\right)H\left(\theta^{k}\xi,x_{k},x_{k}\right)\prod_{j=1}^{m}\mathbb{P}^+_{\theta^{k+j-1}\xi}\left(y_{j-1};dy_{j}\right)\\
			=&\mathbb{P}_{\xi}\left(\nu=k,\zeta_{1}\in dx_{1},\cdots,\zeta_{k}\in dx_{k}\right)\mathbb{P}_{\xi}\left(\zeta^{\nu}_{1}\in dy_{1},\cdots,\zeta^{\nu}_{m}\in dy_{m}\right),
		\end{aligned}
	\end{equation}
	which proves that $\left(\nu,\zeta_{1},\cdots,\zeta_{\nu}\right)$ and $\left(\zeta^{\nu}_{1},\zeta^{\nu}_{2},\cdots\right)$ are independent with respect to $\mathbb{P}_{\xi}$.
	
	$\left(3\right)$ For all $k\geq1$ and $x\geq 0$, we have
	\begin{equation}\nonumber
		\begin{aligned}
			\mathbb{P}_{\xi}\left(\nu=k,\zeta_{\nu}\in dx\right)=&\mathbb{P}_{\xi}\left(\zeta_{k}< \zeta_{k-1},\cdots,\zeta_{k}< \zeta_{1},\zeta_{k}\in dx\right)H\left(\theta^{k}\xi,x,x\right)\\
			=&\mathbb{P}_{\xi}\left(S_{k}< S_{k-1},\cdots,S_{k}< S_{1},S_{k}\in dx\right)\frac{U\left(\theta^k\xi,x\right)}{U(\xi,0)}\frac{U\left(\theta^k\xi,0\right)}{U(\theta^k\xi,x)}\\
			=&\mathbb{P}_{\xi}\left(S_{k}< S_{k-1},\cdots,S_{k}< S_{1},S_{k}\in dx\right)\frac{U\left(\theta^k\xi,0\right)}{U(\xi,0)}
		\end{aligned}
	\end{equation}
\end{proof}

Define $\tilde{\mathbb{P}}(\cdot):=\int_{\Omega}\frac{U(\xi,0)}{U(0)}\mathbb{P}_{\xi}(\cdot)\, d\mathbf{P}$ and denote by $\tilde{\mathbb{E}}$ the corresponding expectation, we show that the excursion $\left(\zeta_{1},\cdots,\zeta_{\nu}\right)$ under annealed probability $\tilde{\mathbb{P}}$ is distributed as $\left(S_{\Gamma_1}-S_{\Gamma_1-1},\cdots,S_{\Gamma_1}\right)$ under $\mu^{\infty}$.
\begin{proposition}[Annealed excursion distribution]\label{Annealed excursion distribution}
	$\nu$ is the time of the first prospective minimal value of the process $(\zeta_{n},n\geqslant0)$ defined as $\left(\ref{prospective minimal}\right)$, then we have
	
	$\left(1\right)$ $\tilde{\mathbb{P}}\left(\zeta_{\nu}\in dx\right)=\mu^{\infty}\left(S_{\Gamma_1}\in dx\right)$.
	
	$\left(2\right)$ $\tilde{\mathbb{E}}\left[f\left(\nu,\zeta_{1},\cdots,\zeta_{\nu}\right)\right]=\mathbb{E}_{\mu^{\infty}}\left[f\left(\Gamma_1,S_{\Gamma_1}-S_{\Gamma_1-1},\cdots,S_{\Gamma_1}\right)\right]$ for any measurable function $f$.
\end{proposition}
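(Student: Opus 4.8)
The plan is to derive a pathwise refinement of Proposition~\ref{Tanaka decomposition}(3), average it over the environment against the density $U(\xi,0)/U(0)$ that defines $\tilde{\mathbb{P}}$, and then recognize the resulting annealed formula by means of the classical duality (time reversal) for the i.i.d.-increment random walk $(S_n)$ under $\mu^{\infty}$.

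First I would record the pathwise analogue of Proposition~\ref{Tanaka decomposition}(3), obtained exactly as in the proof of that proposition but retaining the whole excursion $(\zeta_1,\dots,\zeta_\nu)$. For $k\geq1$ and $x_1,\dots,x_k\geq0$, combining the Markov property at time $k$, the identity $H(\xi,x,z)=\mathbb{P}_{\xi,x}(\zeta_n\geq z\ \text{for all}\ n>0)$ established in the proof of Proposition~\ref{Tanaka decomposition}(2), and the $h$-transform defining $(\zeta_n)$, one obtains
\[
U(\xi,0)\,\mathbb{P}_\xi(\nu=k,\zeta_1\in dx_1,\dots,\zeta_k\in dx_k)=U(\theta^k\xi,0)\,\mathbf{1}_{\{x_1,\dots,x_{k-1}>x_k\geq0\}}\,\mathbb{P}_\xi(S_1\in dx_1,\dots,S_k\in dx_k);
\]
integrating out $x_1,\dots,x_{k-1}$ recovers Proposition~\ref{Tanaka decomposition}(3).

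Next I would integrate this identity against $U(0)^{-1}\,d\mathbf{P}$. The key point is that $U(\theta^k\xi,0)$ is a measurable function of $(\xi_{k+1},\xi_{k+2},\dots)$ alone — immediate from the definition of $U$ through the $k$-step shifted walk — whereas $\mathbb{P}_\xi(S_1\in dx_1,\dots,S_k\in dx_k)$ depends only on $(\xi_1,\dots,\xi_k)$. Since the environment is i.i.d., these two factors are $\mathbf{P}$-independent, $\mathbf{E}[U(\theta^k\xi,0)]=\mathbf{E}[U(\xi,0)]=U(0)$, and $\mathbf{E}[\mathbb{P}_\xi(\cdot)]=\mathbb{P}(\cdot)=\mu^{\infty}(\cdot)$; hence
\[
\tilde{\mathbb{P}}(\nu=k,\zeta_1\in dx_1,\dots,\zeta_k\in dx_k)=\mathbf{1}_{\{x_1,\dots,x_{k-1}>x_k\geq0\}}\,\mu^{\infty}(S_1\in dx_1,\dots,S_k\in dx_k).
\]

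Finally I would invoke duality under $\mu^{\infty}$: as $X_1,\dots,X_k$ are i.i.d., $(X_k,\dots,X_1)\overset{d}{=}(X_1,\dots,X_k)$, so the reversed path $\widehat S_j:=S_k-S_{k-j}$, $0\leq j\leq k$, has the same law as $(S_0,\dots,S_k)$. Under this reversal the constraint $\{S_i>S_k\ \text{for}\ i<k,\ S_k\geq0\}$ becomes $\{\widehat S_1<0,\dots,\widehat S_{k-1}<0,\ \widehat S_k\geq0\}=\{\Gamma_1=k\}$ and $(S_k-S_{k-1},\dots,S_k-S_1,S_k)=(\widehat S_1,\dots,\widehat S_{k-1},\widehat S_k)$; substituting this change of variables into the previous display gives
\[
\tilde{\mathbb{P}}(\nu=k,\zeta_1\in dx_1,\dots,\zeta_k\in dx_k)=\mu^{\infty}(\Gamma_1=k,\ S_{\Gamma_1}-S_{\Gamma_1-1}\in dx_1,\dots,S_{\Gamma_1}-S_0\in dx_k),
\]
which is part~(2); part~(1) follows by integrating out $x_1,\dots,x_{k-1}$ and summing over $k$. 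The step I expect to require the most care is the bookkeeping of strict versus weak inequalities: one must secure the indicator $\{x_1,\dots,x_{k-1}>x_k\geq0\}$ in the pathwise identity — in particular $S_0=0$ must be kept out of the strict inequalities — and then follow these inequalities through the reversal so that the reversed constraint is precisely the \emph{weak} ascending ladder epoch $\Gamma_1=\inf\{n>0:S_n\geq0\}$ and not a strict one; the environment factorization is routine but rests on the observation that $U(\theta^k\xi,0)$ depends on $(\xi_j)_{j>k}$ only.
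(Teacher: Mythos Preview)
Your proposal is correct and follows essentially the same approach as the paper: establish the pathwise quenched identity (the full-excursion refinement of Proposition~\ref{Tanaka decomposition}(3)), average over the environment using the independence of $U(\theta^k\xi,0)$ from $(\xi_1,\dots,\xi_k)$, and then apply duality under $\mu^{\infty}$ to identify the resulting constraint as $\{\Gamma_1=k\}$. The paper proceeds identically, proving part~(1) and part~(2) separately rather than deducing~(1) from~(2), but the substance and the handling of the strict/weak inequalities are the same.
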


\begin{proof}
	$\left(1\right)$ Proposition \ref{Tanaka decomposition} (3) yields
	\begin{equation}\nonumber
		\begin{aligned}
			\mathbf{E}\left[U(\xi,0)\mathbb{P}_{\xi}\left(\nu=k,\zeta_{\nu}\in dx\right)\right]=&\mathbf{E}\left[\mathbb{P}_{\xi}\left(S_{k}< S_{k-1},\cdots,S_{k}< S_{1},S_{k}\in dx\right)U\left(\theta^k\xi,0\right)\right]\\
			=&U\left(0\right)\mu^{\infty}\left(S_{k}-S_{k-1}<0,\cdots,S_{k}-S_{1}<0,S_{k}\in dx\right)\\
			=&U\left(0\right)\mu^{\infty}\left(S_{1}<0,\cdots,S_{k-1}<0,S_{k}\in dx\right)\\
			=&U\left(0\right)\mu^{\infty}\left(\Gamma_1=k,S_{\Gamma_1}\in dx\right).
		\end{aligned}
	\end{equation}
    Dividing by $U(0)$ and summing over $k$, we have $\tilde{\mathbb{P}}\left(\zeta_{\nu}\in dx\right)=\mu^{\infty}\left(S_{\Gamma_1}\in dx\right)$.
    
    $\left(2\right)$ Similar to the proof of Proposition \ref{Tanaka decomposition} (3), we get
    \begin{equation}\nonumber
    	\begin{aligned}
    		&\tilde{\mathbb{E}}\left[f\left(\nu,\zeta_{1},\cdots,\zeta_{\nu}\right)\right]\\
    		=&\mathbf{E}\left[\int f\left(k,x_{1},\cdots,x_{k}\right)\frac{U(\xi,0)}{U(0)}\mathbb{P}_{\xi}\left(\nu=k,\zeta_{1}\in dx_{1},\cdots,\zeta_{k}\in dx_{k}\right)\right]\\
    		=&\mathbf{E}\left[\int f\left(k,x_{1},\cdots,x_{k}\right)\frac{U\left(\theta^k\xi,0\right)}{U(0)}\mathbb{P}_{\xi}\left(S_{k}< S_{k-1},\cdots,S_{k}< S_{1}, S_{1}\in dx_{1},\cdots,S_{k}\in dx_{k}\right)\right]\\
    		=&\int f\left(k,x_{1},\cdots,x_{k}\right)\mu^{\infty}\left(S_{k}< S_{k-1},\cdots,S_{k}< S_{1}, S_{1}\in dx_{1},\cdots,S_{k}\in dx_{k}\right)\\
    		=&\int f\left(k,x_{1},\cdots,x_{k}\right)\mu^{\infty}\left(\tilde{S}_{1}<0,\cdots,\tilde{S}_{k-1}<0, \tilde{S}_{k}-\tilde{S}_{k-1}\in dx_{1},\cdots,\tilde{S}_{k}\in dx_{k}\right)\\
    		=&\mathbb{E}_{\mu^{\infty}}\left[f\left(\Gamma_1,S_{\Gamma_1}-S_{\Gamma_1-1},\cdots,S_{\Gamma_1}\right)\right],
    	\end{aligned}
    \end{equation}
    where $\tilde{S}_{j}:=S_{k}-S_{k-j},j\leq k$ and the last equality follows from the duality property for the random walk under $\mu^{\infty}$.
\end{proof}

\subsubsection{Application of Tanaka's decomposition }

The following proposition gives a necessary and sufficient condition for the a.s. divergence of some series associated with $\zeta^{(\beta)}_{n}$.
\begin{proposition}\label{prop 0-1 law}
	Let $\zeta^{(\beta)}_{n}$ be defined as $\left(\ref{Prob of conditioned rw}\right)$, $F:[-\beta,\infty)\to[0,\infty)$ be a non-increasing measurable function, for all $x\geq -\beta$, then
	\begin{equation}\nonumber
		\begin{aligned}
			\int_{-\beta}^{\infty}F(x)(x+\beta)\, dx=\infty ~~&\Longleftrightarrow~~\sum_{n=1}^{\infty}\frac{U\left(\xi,\beta\right)F\left(\zeta^{(\beta)}_{n}\right)\left(\zeta^{(\beta)}_{n}+\beta\right)}{U\left(\theta^{n}\xi,\zeta^{(\beta)}_{n}+\beta\right)}=\infty,~~\mathbb{P}\text{-a.s.}\\
			~~&\Longleftrightarrow~~\sum_{n=1}^{\infty}U\left(\xi,\beta\right)F\left(\zeta^{(\beta)}_{n}\right)=\infty,~~\mathbb{P}\text{-a.s.}.
		\end{aligned}
	\end{equation}
\end{proposition}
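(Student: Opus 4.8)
The plan is to establish the two equivalences by reducing everything to an integral test against the renewal measure $\mathcal{R}^{(\beta)}(dx)$, for which we already have the two-sided comparison \eqref{relation of renewal measure with L measure} and the expectation formula of Lemma \ref{lem the integral of series of zeta_{n}}. I will treat the two displayed series separately, but the mechanism is the same: a first-moment computation (using Lemma \ref{lem the integral of series of zeta_{n}}) shows that if the integral $\int_{-\beta}^\infty F(x)(x+\beta)\,dx$ is \emph{finite} then each series has finite annealed expectation, hence converges $\mathbb{P}$-a.s.; and conversely, when the integral \emph{diverges}, a zero--one argument via Tanaka's decomposition forces the series to diverge $\mathbb{P}$-a.s. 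Since $F$ is non-increasing, $U(\theta^n\xi,\zeta^{(\beta)}_n+\beta)\asymp \zeta^{(\beta)}_n+\beta$ for large $\zeta^{(\beta)}_n$ by Proposition \ref{harmonic function}(4)--(5) and $\zeta^{(\beta)}_n\to\infty$ by Proposition \ref{Tanaka decomposition}(1), the two series differ only by bounded factors except on an a.s.-finite set of indices, so the middle and bottom series are a.s.\ simultaneously finite or infinite; this handles the second equivalence once the first is in hand. For the first moment direction, apply Lemma \ref{lem the integral of series of zeta_{n}} with $G(\xi,x):=F(x)(x+\beta)$, which is deterministic in $\xi$ (so $G(x)=F(x)(x+\beta)$), giving $\tilde{\mathbb{E}}$-type control; then use \eqref{relation of renewal measure with L measure} to bound $\int_{-\beta}^\infty F(x)(x+\beta)\,\mathcal{R}^{(\beta)}(dx)$ above and below by constant multiples of $\int_0^\infty F(x-\beta)\cdot\big(\text{linear}\big)\,dx$, i.e.\ of $\int_{-\beta}^\infty F(x)(x+\beta)\,dx$. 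Hence finiteness of the integral gives finiteness of the expectation and thus a.s.\ convergence of the series.

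The substantive part is the converse, i.e.\ the a.s.\ \emph{divergence} of $\sum_n U(\xi,\beta)F(\zeta^{(\beta)}_n)$ when $\int_{-\beta}^\infty F(x)(x+\beta)\,dx=\infty$. Here I reduce $\beta>0$ to $\beta=0$ (shifting by $\beta$ only changes $\zeta^{(0)}_n$ and $\zeta^{(\beta)}_n-\beta$ by comparable deterministic factors via $U(\xi,\cdot)$ monotone and asymptotically linear, so it suffices to treat $\zeta_n=\zeta^{(0)}_n$). The idea, following the classical Tanaka-decomposition argument of \cite{Tan89,Big03}, is a renewal/ergodic zero-one law: by Proposition \ref{Tanaka decomposition}(1), $(\zeta_n)$ is transient to $+\infty$, and by Proposition \ref{Tanaka decomposition}(2)--(3) together with Proposition \ref{Annealed excursion distribution}, under $\tilde{\mathbb{P}}$ the path $(\zeta_n)$ decomposes, at its successive prospective minima, into i.i.d.\ (in the annealed sense) excursion blocks whose base points $0=M_0<M_1<M_2<\cdots$ have increments distributed as the strict ascending ladder heights $S_{\Gamma_1}$ under $\mu^\infty$. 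Within the $j$-th block the process stays $\ge M_j$, so $\sum_n F(\zeta_n)\ge \sum_j F(\text{something in block }j)\ge \sum_j F(M_j+(\text{block max}))$ and, going the other way, $\sum_n F(\zeta_n)\le \sum_j (\text{block length})_j\cdot F(M_j)$. The divergence of $\sum_j F(M_j)$ is then a statement about a renewal sequence with linear renewal rate, hence equivalent to $\int_0^\infty F(x)\,dx\cdot(\text{linear density})=\int_0^\infty F(x)\,x\,dx=\infty$ by a standard Borel--Cantelli / renewal-theorem argument; the full annealed-to-quenched transfer (an a.s.\ statement under $\mathbb{P}$) comes from the $\tilde{\mathbb{P}}\sim\mathbb{P}$ equivalence of measures ($d\tilde{\mathbb{P}}/d\mathbb{P}=U(\xi,0)/U(0)$ with $0<U(\xi,0)<\infty$ a.s.) and the fact that divergence of a series is a tail event, so a $0$--$1$ law applies.

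The main obstacle I expect is making the block/excursion comparison quantitatively honest: the lower bound $\sum_n F(\zeta_n)\ge \sum_j F(\text{block max in block }j)$ requires controlling the overshoot/maximum within each excursion, and one must check that these maxima do not grow so fast that $\sum_j F(\text{block max}_j)$ converges even while $\sum_j F(M_j)$ diverges — this needs an integrability input on the excursion maximum, which should follow from the $2+\delta$ moment assumption \eqref{2 more moment} (equivalently \eqref{condition for quenched harmonic function}) and known tail bounds for ladder structures of oscillating random walks with finite variance. A secondary technical point is the $\beta>0$ to $\beta=0$ reduction for the divergence direction, which I would carry out by bounding $F(\zeta^{(\beta)}_n)$ between constant multiples of $\tilde F(\zeta^{(0)}_n)$ for suitable monotone $\tilde F$ with the same integral behavior, using that $U(\xi,\cdot)$ is sandwiched between linear functions uniformly in the shifted environment by Proposition \ref{harmonic function}(4)--(5); once that is done, the zero-one argument for $\beta=0$ transfers verbatim.
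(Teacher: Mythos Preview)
Your overall framework matches the paper's: the second equivalence via Proposition~\ref{harmonic function}(5) and Proposition~\ref{Tanaka decomposition}(1), the ``finite integral $\Rightarrow$ finite expectation $\Rightarrow$ a.s.\ convergence'' direction via Lemma~\ref{lem the integral of series of zeta_{n}} and \eqref{relation of renewal measure with L measure}, the reduction to $\beta=0$, the passage to $\tilde{\mathbb{P}}$, and the use of the Tanaka excursion decomposition (Propositions~\ref{Tanaka decomposition}--\ref{Annealed excursion distribution}) for the hard direction are all exactly what the paper does.

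However, your hard direction contains a genuine gap. You propose the lower bound $\sum_n F(\zeta_n)\ge \sum_j F(M_j)$ (one term per excursion block, at the prospective minimum $M_j$) and then assert that ``divergence of $\sum_j F(M_j)$ is \ldots\ equivalent to $\int_0^\infty F(x)\,x\,dx=\infty$ by a standard Borel--Cantelli / renewal-theorem argument''. This is false: since $M_j/j\to C=\mathbb{E}_{\mu^\infty}(S_{\Gamma_1})$ by the SLLN, one has $\sum_j F(M_j)\asymp \int_0^\infty F(x)\,dx$, \emph{not} $\int_0^\infty F(x)\,x\,dx$. Take $F(x)=(1+x)^{-2}$: then $\int F(x)\,x\,dx=\infty$ but $\sum_j F(M_j)<\infty$, so your lower bound cannot force divergence. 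The missing factor of $x$ is an occupation-time effect: the conditioned walk spends order~$x$ steps near level $x$, and you lose this by keeping only one term per block. A second issue is that your proposed zero--one law does not apply directly: the contribution of block $k$ to $\sum_n F(\zeta_n)$ depends on the accumulated height $M_{k-1}=\sum_{i<k}\omega_i(\nu_i)$, so the block contributions are \emph{not} independent under $\tilde{\mathbb{P}}$.

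The paper resolves both problems simultaneously by a decoupling trick: using the SLLN to replace $\sum_{i\le k}\omega_i(\nu_i)$ by $Ak$ (for $A>C$) inside the argument of $F$, which yields genuinely independent summands $\chi_k(\nu,\omega,F):=\sum_{j=1}^{\nu_k}F\big(Ak-(\omega_k(\nu_k)-\omega_k(j))\big)$. Kolmogorov's $0$--$1$ law then applies to $\sum_k\chi_k$, and a two-case argument (either $\sum_k\chi_k$ is a.s.\ bounded, or not) recovers the correct integral $\int_0^\infty F(x)\,x\,dx$; in the unbounded case this goes through a further level-set decomposition $\Lambda_{k,l}$ and a first/second moment estimate on $Y_m:=m^{-1}\sum_{k=1}^m\Lambda_{k,m+1-k}$, following Chen~\cite{Che15}. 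The point is that retaining \emph{all} $\nu_k$ terms within each excursion, together with the decoupling, is what produces the extra linear weight; your block-extremum bounds and the excursion-maximum integrability you flag as the main obstacle are a red herring.
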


\begin{proof}
	For the second equivalence relation, note that for almost all $\xi$, $\zeta^{(\beta)}_{n}+\beta\to\infty$ $\mathbb{P}_{\xi}\text{-a.s.}$ as $n\to\infty$ (Proposition \ref{Tanaka decomposition} $\left(1\right)$) and by $\left(\ref{asymptotic behaviour of harmonic function 2}\right)$, we have $U\left(\theta^n\xi,\zeta^{(\beta)}_{n}+\beta\right)\sim\zeta^{(\beta)}_{n}+\beta$ as $n\to\infty$, hence,
	$$\sum_{n=1}^{\infty}\frac{U\left(\xi,\beta\right)F\left(\zeta^{(\beta)}_{n}\right)\left(\zeta^{(\beta)}_{n}+\beta\right)}{U\left(\theta^{n}\xi,\zeta^{(\beta)}_{n}+\beta\right)}=\infty~~\Longleftrightarrow~~\sum_{n=1}^{\infty}U\left(\xi,\beta\right)F\left(\zeta^{(\beta)}_{n}\right)=\infty, ~~\mathbb{P}\text{-a.s.}.$$
	By Lemma \ref{lem the integral of series of zeta_{n}}, we have
	$$\mathbb{E}\left[\sum_{n=1}^{\infty}\frac{U\left(\xi,\beta\right)F\left(\zeta^{(\beta)}_{n}\right)\left(\zeta^{(\beta)}_{n}+\beta\right)}{U\left(\theta^{n}\xi,\zeta^{(\beta)}_{n}+\beta\right)}\right]=\int_{-\beta}^{\infty}F(x)(x+\beta)\mathcal{R}^{(\beta)}(dx).$$
	It follows from (\ref{relation of renewal measure with L measure}) that
	$$\sum_{n=1}^{\infty}\frac{U\left(\xi,\beta\right)F\left(\zeta^{(\beta)}_{n}\right)\left(\zeta^{(\beta)}_{n}+\beta\right)}{U\left(\theta^{n}\xi,\zeta^{(\beta)}_{n}+\beta\right)}=\infty,~~\mathbb{P}\text{-a.s.} ~~\Longrightarrow~~\int_{-\beta}^{\infty}F(x)(x+\beta)\, dx=\infty.$$
	We only need to prove that
	$$\int_{-\beta}^{\infty}F(x)(x+\beta)\, dx=\infty ~~\Longrightarrow~~\sum_{n=1}^{\infty}U\left(\xi,\beta\right)F\left(\zeta^{(\beta)}_{n}\right)=\infty,~~\mathbb{P}\text{-a.s.}.$$
	
	For simplicity, we only consider $\beta=0$ and write $\zeta_{n}:=\zeta^{(0)}_{n}$, since $\beta>0$ is similar with this case. Note that
	$$\sum_{n=1}^{\infty}U\left(\xi,0\right)F\left(\zeta_{n}\right)=\infty,~~\mathbb{P}\text{-a.s.}~~\Longleftrightarrow~~\sum_{n=1}^{\infty}F\left(\zeta_{n}\right)=\infty,~~\tilde{\mathbb{P}}\text{-a.s.}.$$
	We are left to show that
	\begin{equation}\label{0-1 law}
		\int_{0}^{\infty}F(x)x\, dx=\infty 
		~~\Longrightarrow~~\sum_{n=1}^{\infty}F\left(\zeta_{n}\right)=\infty,~~\tilde{\mathbb{P}}\text{-a.s.}.
	\end{equation}
	To prove (\ref{0-1 law}), it suffices to check that
	$$\tilde{\mathbb{P}}\left(\sum_{n=1}^{\infty}F\left(\zeta_n\right)=\infty\right)<1 ~~\Longrightarrow~~\int_0^{\infty} F(x)x \,dx<\infty.$$
	We assume that $\tilde{\mathbb{P}}\left(\sum_{n=1}^{\infty}F\left(\zeta_n\right)=\infty\right)<1$, that is $\tilde{\mathbb{P}}\left(\sum_{n=1}^{\infty}F\left(\zeta_n\right)<\infty\right)>0$.
	
	We first use Tanaka's decomposition (Proposition \ref{Tanaka decomposition}, \ref{Annealed excursion distribution}) to reconstruct the process $\left(\zeta_{n},n\geq0\right)$. Recall that $$\nu:=\inf\left\{m\geq1:\zeta_{m+n}\geq \zeta_{m}, ~\text{for all}~ n\geq0\right\}.$$
	We have an excursion $\left(\zeta_j, 0\leq j \leq \nu\right)$, which is denoted by $\omega=(\omega(j), 0\leq j \leq \nu)$. Let $\left\{\omega_k=\left(\omega_k(j), 0 \leq j \leq \nu_k\right), k \geq 1\right\}$ be a sequence of independent copies of $\omega$ under $\tilde{\mathbb{P}}$. Define
	$$V_0:=0,~~V_k:=\nu_1+\cdots+\nu_k,~\text{for all}~ k \geq 1.$$
	The process
	$$\zeta_0=0,~~\zeta_n=\sum_{i=1}^{k}\omega_i\left(\nu_i\right)+\omega_{k+1}\left(n-V_{k}\right), ~\text{for}~ V_k<n \leq V_{k+1},$$ is what we need. Then,
	\begin{equation}\nonumber
		\begin{aligned}
			\sum_{n=1}^{\infty}F\left(\zeta_{n}\right)&=\sum_{k=1}^{\infty}\sum_{n=V_{k-1}+1}^{V_k}F\left(\sum_{i=1}^{k-1}\omega_i\left(\nu_i\right)+\omega_{k}\left(n-V_{k-1}\right)\right)\\
			&=\sum_{k=1}^{\infty}\sum_{j=1}^{\nu_k}F\left(\sum_{i=1}^{k}\omega_i\left(\nu_i\right)-\left(\omega_{k}\left(\nu_k\right)-\omega_{k}\left(j\right)\right)\right).
		\end{aligned}
	\end{equation}
	Hence, by hypothesis,
	$$\tilde{\mathbb{P}}\left(\sum_{n=1}^{\infty}F\left(\zeta_{n}\right)<\infty\right)=\tilde{\mathbb{P}}\left(\sum_{k=1}^{\infty}\sum_{j=1}^{\nu_k}F\left(\sum_{i=1}^{k}\omega_i\left(\nu_i\right)-\left(\omega_{k}\left(\nu_k\right)-\omega_{k}\left(j\right)\right)\right)<\infty\right)>0.$$
	
	On the other hand, by Proposition \ref{Annealed excursion distribution} (1), it follows from the strong law of large number that
	$$\lim _{k \rightarrow \infty} \frac{\sum_{i=1}^{k}\omega_i\left(\nu_i\right)}{k}=C,~~\tilde{\mathbb{P}}\text{-a.s.}.$$
	where $C:=\mathbb{E}_{\mu^\infty}\left(S_{\Gamma_1}\right)$, the finiteness of $C$ is due to \cite{Fel71} (Chapter XVIII, Section 5, Theorem 1).	Let $\epsilon>0$ and $A:=(C+\epsilon)\vee 1$, thus, for all sufficiently large $k$, $\sum_{i=1}^{k}\omega_i\left(\nu_i\right)\leq Ak$. Since $F$ is non-increasing, we obtain
	\begin{equation}\nonumber
		\begin{aligned}
			&\tilde{\mathbb{P}}\left(\sum_{k=1}^{\infty}\sum_{j=1}^{\nu_k}F\left(Ak-\left(\omega_{k}\left(\nu_k\right)-\omega_{k}\left(j\right)\right)\right)<\infty\right)\\
			\geq~&\tilde{\mathbb{P}}\left(\sum_{k=1}^{\infty}\sum_{j=1}^{\nu_k}F\left(\sum_{i=1}^{k}\omega_i\left(\nu_i\right)-\left(\omega_{k}\left(\nu_k\right)-\omega_{k}\left(j\right)\right)\right)<\infty\right)>0.
		\end{aligned}
	\end{equation}
	Let
	$$\chi_k(\nu,\omega,F):=\sum_{j=1}^{\nu_k} F\left(Ak-\left(\omega_k(\nu_{k})-\omega_k(j)\right)\right),$$
	hence, $\tilde{\mathbb{P}}\left(\sum_{k=1}^{\infty}\chi_k(\nu,\omega,F)<\infty\right)>0$. Note that the independence of the sequence $\left\{\omega_k, k \geq 1\right\}$ yields the independence of the sequence $\left\{\chi_k\left(\nu,\omega,F\right), k \geq 1\right\}$. By Kolmogorov's 0-1 law, it follows that
	\begin{equation}\label{a.s. finite}
		\tilde{\mathbb{P}}\left(\sum_{k=1}^{\infty}\chi_k\left(\nu,\omega,F\right)<\infty\right)=1.
	\end{equation}
	
	From now on, we are proceeding in the same way as \cite{Che15}. Let $E_M:=\left\{\sum_{k=1}^{\infty}\chi_k\left(\nu,\omega,F\right)<M\right\}$ for any $M>0$. Either $\mathbb{P}\left(E_{M_0}\right)=1$ for some $M_0<\infty$, or $\tilde{\mathbb{P}}\left(E_M\right)<1$ for all $M \in(0, \infty)$. For the first case, that is there exists some $M_0<\infty$ such that $\mathbb{P}\left(E_{M_0}\right)=1$, then
	$$
	\begin{aligned}
		M_0 &\geq \tilde{\mathbb{E}}\left(\sum_{k=1}^{\infty}\chi_k\left(\nu,\omega,F\right)\right)\\
		&=\tilde{\mathbb{E}}\left(\sum_{k=1}^{\infty}\sum_{j=1}^{\nu_k}F\left(Ak-\left(\omega_k(\nu_{k})-\omega_k(j)\right)\right)\right)\\
		&=\sum_{k=1}^{\infty}\tilde{\mathbb{E}}\left(\sum_{j=1}^{\nu} F\left(Ak-\left(\omega(\nu)-\omega(j)\right)\right)\right)\\
		&=\sum_{k=1}^{\infty}\mu^{\infty}\left(\sum_{j=0}^{\Gamma_1-1} F\left(Ak-S_{j}\right)\right),
	\end{aligned}
	$$
	where the last equality follows from Proposition \ref{Annealed excursion distribution} (2). Similar to $\left(\ref{intergral of R}\right)$, we have
	$$
	\begin{aligned}
		\mathbb{E}_{\mu^{\infty}}\left(\sum_{j=0}^{\Gamma_1-1} F\left(Ak-S_{j}\right)\right)=&\mathbb{E}_{\mu^{\infty}}\left(\sum_{n=0}^{\infty}F\left(Ak-S_{n}\right)\mathbf{1}_{\left\{n<\Gamma_1\right\}}\right)\\
		=&\mathbb{E}_{\mu^{\infty}}\left(F\left(Ak-S_{n}\right)\mathbf{1}_{\left\{\max_{j\leq n}S_{j}<0\right\}}\right)\\
		=&\int_0^{\infty}F(Ak+x)\mathcal{R}^{-}(dx),
	\end{aligned}
	$$
	where $\mathcal{R}^{-}(dx)$ is the renewal measure of $R^{-}(x)$, i.e., the renewal measure associated with the strict descending ladder height process. Thus, $\sum_{k=1}^{\infty}\int_0^{\infty}F(Ak+x)\mathcal{R}^{-}(dx)<\infty$,
	which implies that
	$$\int_0^{\infty}F(x)x \,dx<\infty.$$
	For the second case, $\tilde{\mathbb{P}}\left(E_M\right)<1$ for all $M \in(0, \infty)$, so $\lim _{M \to \infty} \tilde{\mathbb{P}}\left(E_M\right)=1$ by (\ref{a.s. finite}). Let
	$$\Lambda_{k,l}(\nu,\omega):=\sum_{j=1}^{\nu_k}\mathbf{1}_{\left\{A(l-1)\leq-\left(\omega_k(\nu_{k})-\omega_k(j)\right)<Al\right\}}, ~~\text{for all}~~k\geq1, ~~l\geq1.$$
	Note that, for any $k \geq 1$,
	$$
	\begin{aligned}
		\chi_k\left(\nu,\omega,F\right)&=\sum_{j=1}^{\nu_k} F\left(Ak-\left(\omega_k(\nu_{k})-\omega_k(j)\right)\right) \sum_{l=1}^{\infty} \mathbf{1}_{\left\{A(l-1) \leq-\left(\omega_k(\nu_{k})-\omega_k(j)\right)<Al\right\}}\\
		&=\sum_{l=1}^{\infty} \sum_{j=1}^{\nu_k} F\left(Ak-\left(\omega_k(\nu_{k})-\omega_k(j)\right)\right) \mathbf{1}_{\left\{A(l-1) \leq-\left(\omega_k(\nu_{k})-\omega_k(j)\right)<Al\right\}}\\
		&\geq \sum_{l=1}^{\infty} F(Ak+Al) \Lambda_{k,l}(\nu,\omega),
	\end{aligned}
	$$
	where the last inequality holds because $F$ is non-increasing. Thus, we have
	$$
	\begin{aligned}
		\sum_{k=1}^{\infty} \chi_k\left(\nu,\omega,F\right) &\geq \sum_{k=1}^{\infty}\sum_{l=1}^{\infty}F(Ak+Al) \Lambda_{k,l}(\nu,\omega)\\
		&=\sum_{m=1}^{\infty} F(Am+A) \sum_{k=1}^{m} \Lambda_{k,m+1-k}(\nu,\omega)\\
		&=\sum_{m=1}^{\infty} F(Am+A) m Y_m,
	\end{aligned}
	$$
	where $Y_m:=\sum_{k=1}^m \Lambda_{k,m+1-k}(\nu,\omega) / m$ for all $m \geq 1$. Note that, $\left(\Lambda_{k,\cdot}(\nu,\omega), k\geq1\right)$ are i.i.d. under $\tilde{\mathbb{P}}$, and for all $l\geq1$, $\Lambda_{1,l}(\nu,\omega)$ has the same law as $\sum_{j=0}^{\Gamma_{1}-1}\mathbf{1}_{\left\{A(l-1)\leq -S_j< Al\right\}}$ under $\mu^{\infty}$. Following the same first and second moments argument for $Y_m$ as \cite{Che15}, we obtain that there exists a sufficiently large number $M>0$ such that, for any $m \geq 1$,
	$$
	C_2 \geq \tilde{\mathbb{E}}\left(Y_m \mathbf{1}_{E_M}\right) \geq C_1>0,
	$$
	where $C_1, C_2$ are positive constants. Therefore, we have
	$$
	\begin{aligned}
		M & \geq \tilde{\mathbb{E}}\left(\sum_{k=1}^{\infty} \chi_k\left(\nu,\omega,F\right) \mathbf{1}_{E_M}\right)\\
		& \geq \tilde{\mathbb{E}}\left(\sum_{m=1}^{\infty} F(Am+A) m Y_m \mathbf{1}_{E_M}\right)\\
		& = \sum_{m=1}^{\infty} F(Am+A) m \tilde{\mathbb{E}}\left(Y_m \mathbf{1}_{E_M}\right)\\
		& \geq \sum_{m=1}^{\infty} F(Am+A) m C_1.
	\end{aligned}
	$$
	This yields
	$$\sum_{m=1}^{\infty} F(A m+A) m \leq \frac{M}{C_1}<\infty,$$
	which implies that $\int_0^{\infty} F(y) y \mathrm{~d} y<\infty$, and completes the proof of (\ref{0-1 law}).
	
\end{proof}


\section{Change of measure and spinal decomposition}\label{section 3}

In this section, we use the quenched harmonic function of the associated random walk to introduce the truncated derivative martingale. We then show that the limit of the derivative martingale exists and the non-triviality of the limit is equivalent to the mean convergence of the truncated derivative martingales. Finally, we give a proof of the spinal decomposition for the time-inhomogeneous branching random walk. The main idea is similar as that of the constant environment situation, the proof is postpone to the appendix.

\subsection{Truncated derivative martingale and change of probabilities}

To study the limit of the derivative martingale, we introduce the non-negative process with a barrier.

Let $\beta\geq0$, $V\left(\varnothing\right)=a\geq0$, we define
$$D^{(\beta)}_{n}:=\sum_{|u|=n}U\left(\theta^{n}\xi,V(u)+\beta\right)e^{-V(u)}\mathbf{1}_{\left\{\min_{0\leq k\leq n}V(u_{k})\geq-\beta\right\}}, ~~n\geq1,$$
and $D^{(\beta)}_{0}:=U\left(\xi,a+\beta\right)e^{-a}$.

\begin{lemma}[Truncated martingale]\label{truncated}
	For any $\beta\geq0$ and $a\geq0$, the process $\left(D^{(\beta)}_{n},n\geq0\right)$ is a non-negative martingale with respect to the filtration $\left(\mathscr{F}_{n},n\geq0\right)$ under both laws $\mathbb{P}_{\xi,a}$ and $\mathbb{P}_{a}$. Therefore, for almost all $\xi$, $D^{(\beta)}_{n}$ converges $\mathbb{P}_{\xi,a}\text{-a.s.}$ to a non-negative finite limit, which we denote by $D^{(\beta)}_{\infty}$.
\end{lemma}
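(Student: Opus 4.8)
The plan is to verify the martingale identity $\mathbb{E}_{\xi,a}[D^{(\beta)}_{n+1}\mid\mathscr{F}_n]=D^{(\beta)}_n$ by a one-step computation that, via the many-to-one structure of the branching random walk, reduces to the quenched harmonic property of $U$, and then to invoke the martingale convergence theorem. Non-negativity is immediate: $U(\xi,\cdot)$ is positive by Proposition~\ref{harmonic function}~$(4)$, so every summand defining $D^{(\beta)}_n$ is non-negative, while $D^{(\beta)}_0=U(\xi,a+\beta)e^{-a}$ is finite (and positive) for almost all $\xi$ by Proposition~\ref{harmonic function}~$(1)$, since $a+\beta\ge0$.

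For the martingale property, fix $n$ and group the particles $v$ of generation $n+1$ by their parent $u=v_n$. Because $v_k=u_k$ for $k\le n$, the barrier indicator factorises as $\mathbf{1}_{\{\min_{0\le k\le n+1}V(v_k)\ge-\beta\}}=\mathbf{1}_{\{\min_{0\le k\le n}V(u_k)\ge-\beta\}}\,\mathbf{1}_{\{V(v)\ge-\beta\}}$. Conditionally on $\mathscr{F}_n$ the children of $u$ are displaced from $V(u)$ according to an independent copy of the point process $L_{n+1}$ with law $\mathcal{L}_{n+1}$, so the conditional expectation under $\mathbb{P}_{\xi,a}$ of the part of $D^{(\beta)}_{n+1}$ coming from the offspring of $u$ equals $\mathbf{1}_{\{\min_{0\le k\le n}V(u_k)\ge-\beta\}}\,e^{-V(u)}\,\mathbb{E}_\xi\big[\sum_{x\in L_{n+1}}U(\theta^{n+1}\xi,V(u)+x+\beta)\,\mathbf{1}_{\{V(u)+x\ge-\beta\}}\,e^{-x}\big]$. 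Now $e^{-x}L_{n+1}(dx)$ has quenched mean $\mu_{n+1}(dx)$, which is the law of the first RWRE increment $S_1$ (started from $0$) in the shifted environment $\theta^n\xi$; hence the bracket equals $\mathbb{E}_{\theta^n\xi}[U(\theta^{n+1}\xi,(V(u)+\beta)+S_1)\mathbf{1}_{\{(V(u)+\beta)+S_1\ge0\}}]$, and since $\{\tau_y>1\}=\{y+S_1\ge0\}$ this is precisely the quenched harmonic identity of Proposition~\ref{harmonic function}~$(2)$ in the environment $\theta^n\xi$ at $y=V(u)+\beta$, giving $U(\theta^n\xi,V(u)+\beta)$. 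Summing over $|u|=n$ recovers $D^{(\beta)}_n$. Taking $n=0$ in the same identity shows $\mathbb{E}_{\xi,a}[D^{(\beta)}_n]=D^{(\beta)}_0<\infty$, so the martingale is genuinely integrable.

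Since $\mathscr{F}_n$ contains the environment, the $\mathbb{P}_a$-martingale property follows by integrating the quenched identity against $\mathbf{P}$ over $\mathscr{F}_n$-events, the annealed integrability being $\mathbb{E}_a[D^{(\beta)}_n]=e^{-a}\,\mathbb{E}[U(\xi,a+\beta)]=e^{-a}U(a+\beta)<\infty$, where $U(a+\beta)<\infty$ is the finiteness established in the proof of Proposition~\ref{Tanaka decomposition}~$(1)$. The martingale convergence theorem then yields, for almost all $\xi$, that $D^{(\beta)}_n$ converges $\mathbb{P}_{\xi,a}$-a.s.\ to a finite non-negative limit $D^{(\beta)}_\infty$. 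The only delicate point is the bookkeeping with the shift operator: one must recognise that the environment governing the offspring-displacement law $\mathcal{L}_{n+1}$ together with the tail environment $\theta^{n+1}\xi$ entering $U(\theta^{n+1}\xi,\cdot)$ is exactly the environment $\theta^n\xi$ for which Proposition~\ref{harmonic function}~$(2)$ applies; once this is lined up, the rest of the computation is routine.
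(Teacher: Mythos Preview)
Your proof is correct and follows essentially the same route as the paper's: group the $(n{+}1)$-st generation by parent, use the branching property and the many-to-one formula to reduce the one-step conditional expectation to $\mathbb{E}_{\theta^n\xi}[U(\theta^{n+1}\xi,(V(u)+\beta)+S_1)\mathbf{1}_{\{(V(u)+\beta)+S_1\ge0\}}]$, apply the quenched harmonic identity of Proposition~\ref{harmonic function}~$(2)$, and conclude with the martingale convergence theorem. You are in fact slightly more explicit than the paper on integrability (the quenched case via $D^{(\beta)}_0<\infty$ and the annealed case via $U(a+\beta)<\infty$ from the proof of Proposition~\ref{Tanaka decomposition}~$(1)$) and on the shift-operator bookkeeping, but there is no substantive difference in method.
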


The proof see the appendix.

Thanks to the following lemma, the truncated martingales $\left(D^{(\beta)}_{n},n\geq0\right)$ do approach the derivative martingale $\left(D_{n},n\geq0\right)$ and we can study the non-triviality of the limit of the derivative martingale by the mean convergence of the truncated derivative martingales.

\begin{lemma}\label{connection}
	$\left(1\right)$ Assume that $\left(\ref{ass1}\right)$, $\left(\ref{boundary condition}\right)$ and $\left(\ref{2 more moment}\right)$ hold, then $\lim_{n\to\infty}D_{n}=D_{\infty}\geq0, ~\mathbb{P}\text{-a.s.}$.
		
	$\left(2\right)$ If there exists $\beta\geq0$ such that for almost all $\xi$, $D^{(\beta)}_{n}$ converges in $L^{1}\left(\mathbb{P}_{\xi}\right)$, then $D_{\infty}$ is non-degenerate for almost all $\xi$, i.e. $\mathbb{P}_{\xi}\left(D_{\infty}>0\right)>0, ~\mathbf{P}\text{-a.s.}$.
	
	$\left(3\right)$ If for almost all $\xi$, $D^{(\beta)}_{\infty}=0, ~\mathbb{P}_{\xi}\text{-a.s.}$ for all $\beta\geq0$, then $D_{\infty}$ is degenerate for almost all $\xi$, i.e. $\mathbb{P}_{\xi}\left(D_{\infty}=0\right)=1, ~\mathbf{P}\text{-a.s.}$.
\end{lemma}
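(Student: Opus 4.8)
The plan is to derive all three parts from one monotonicity property of the truncated martingales and their almost sure convergence (Lemma~\ref{truncated}); part~(1) is the real work. The monotonicity is that for $0\le\beta\le\beta'$ and every $n$ one has $D^{(\beta)}_n\le D^{(\beta')}_n$ pointwise, because $\{\min_{0\le k\le n}V(u_k)\ge-\beta\}\subseteq\{\min_{0\le k\le n}V(u_k)\ge-\beta'\}$ and $U(\theta^n\xi,\cdot)$ is non-decreasing (Proposition~\ref{harmonic function}(4)); letting $n\to\infty$, $D^{(\beta)}_\infty\le D^{(\beta')}_\infty$ $\mathbb{P}_\xi$-a.s. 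I would prove part~(1) in the sharp form that on the event $\Lambda_\beta:=\{V(u)\ge-\beta\text{ for all }u\in\mathbb{T}\}$ one has $D_n\to D^{(\beta)}_\infty$ $\mathbb{P}_\xi$-a.s.; with the monotonicity this gives that $D_\infty:=\lim_n D_n$ exists, equals the increasing limit $\lim_{\beta\to\infty}D^{(\beta)}_\infty$, and is $\ge0$.

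For part~(1), fix an integer $\beta\ge1$. Applying \eqref{many-to-one} with $\mathbf{1}_{\{\tau_\beta=n\}}$ and summing in $n$, the $\mathbb{E}_\xi$-expected number of vertices $u\in\mathbb{T}$ that are first along their ancestral line to enter $(-\infty,-\beta)$ equals $\mathbb{E}_\xi\bigl[e^{S_{\tau_\beta}}\mathbf{1}_{\{\tau_\beta<\infty\}}\bigr]\le e^{-\beta}$; since $\Lambda_\beta^c$ lies in the event that this number is positive, $\mathbb{P}_\xi(\Lambda_\beta^c)\le e^{-\beta}$ and $\bigcup_\beta\Lambda_\beta$ has full $\mathbb{P}_\xi$-probability for almost every $\xi$. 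On $\Lambda_\beta$ every indicator in $D^{(\beta)}_n$ equals one, so with $W_n:=\sum_{|u|=n}e^{-V(u)}$ (the critical additive martingale) and $h(\xi,y):=U(\xi,y)-y\ge0$,
$$D^{(\beta)}_n-D_n-\beta W_n=\sum_{|u|=n}h\bigl(\theta^n\xi,V(u)+\beta\bigr)e^{-V(u)}\ge0\qquad\text{on }\Lambda_\beta .$$
It is classical that in the boundary case $W_n\to0$ $\mathbb{P}_\xi$-a.s.; this already forces $\min_{|u|=n}V(u)\to\infty$ $\mathbb{P}_\xi$-a.s.\ (a particle at height $\le C$ would make $W_n\ge e^{-C}$). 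Cutting the displayed sum at a level $K$: the part over $V(u)+\beta\le K$ is the empty sum for all large $n$; for the part over $V(u)+\beta>K$ one first strengthens \eqref{asymptotic behaviour of harmonic function 2}, a.s., to the uniform bound $U(\theta^n\xi,y)\le(1+\varepsilon)y$ for all large $n$ and all $y\ge K(\varepsilon)$ (a routine subsequence argument using the monotonicity of $U(\xi,\cdot)$), so this part is $\le\varepsilon(D_n+\beta W_n)\le\varepsilon D^{(\beta)}_n\le\varepsilon\sup_m D^{(\beta)}_m<\infty$. Letting $n\to\infty$ and then $\varepsilon\downarrow0$ shows the displayed sum tends to $0$, hence $D_n\to D^{(\beta)}_\infty\ge0$ $\mathbb{P}_\xi$-a.s.\ on $\Lambda_\beta$. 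As $\bigcup_\beta\Lambda_\beta$ is a.s.\ everything, $D_n$ converges $\mathbb{P}_\xi$-a.s.\ to $D_\infty\ge0$, and integrating over $\xi$ gives the $\mathbb{P}$-a.s.\ statement.

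Parts~(2) and (3) are then immediate. By part~(1) and the monotonicity, $D_\infty=\lim_{\beta\to\infty}D^{(\beta)}_\infty$ increasingly, so $D_\infty\ge D^{(\beta)}_\infty$ $\mathbb{P}_\xi$-a.s.\ for every fixed $\beta$. If $D^{(\beta)}_n$ converges in $L^1(\mathbb{P}_\xi)$ for almost every $\xi$, then (recall $a=0$) $\mathbb{E}_\xi[D^{(\beta)}_\infty]=\mathbb{E}_\xi[D^{(\beta)}_0]=U(\xi,\beta)>0$, so $\mathbb{P}_\xi(D^{(\beta)}_\infty>0)>0$ and hence $\mathbb{P}_\xi(D_\infty>0)\ge\mathbb{P}_\xi(D^{(\beta)}_\infty>0)>0$, $\mathbf{P}$-a.s.; this is~(2). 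If instead $D^{(\beta)}_\infty=0$ $\mathbb{P}_\xi$-a.s.\ for every $\beta\ge0$, then $D_\infty=\lim_{\beta\to\infty}D^{(\beta)}_\infty=0$ $\mathbb{P}_\xi$-a.s.; this is~(3).

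The main obstacle is the sharp form of part~(1): showing that $D^{(\beta)}_n-D_n-\beta W_n$ genuinely vanishes, not merely stays bounded. In the constant-environment case this is painless since the renewal function is deterministic, so the low-level part is absorbed by $W_n\to0$ alone; in the random environment the factor $U(\theta^n\xi,K)$ fluctuates, and one leans on the quenched vanishing of the critical martingale $W_n$ (which also yields $\min_{|u|=n}V(u)\to\infty$) together with the uniform strengthening of Proposition~\ref{harmonic function}(5). Everything else is routine bookkeeping.
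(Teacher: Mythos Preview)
Your proposal is correct and follows essentially the same route as the paper's proof: use $W_n\to0$ (the paper cites Biggins--Kyprianou \cite{BK04}, Theorem~7.1, for this in the random-environment boundary case), deduce $\min_{|u|=n}V(u)\to\infty$ and $\mathbb{P}_\xi(\Lambda_\beta)\uparrow1$, and on $\Lambda_\beta$ identify $D^{(\beta)}_n$ with $D_n+\beta W_n$ asymptotically via the behaviour of $U(\theta^n\xi,\cdot)$ at infinity; parts~(2) and~(3) then follow from the monotonicity $D^{(\beta)}_\infty\le D^{(\beta')}_\infty$ exactly as you do. The one difference is that the paper is terse at the asymptotic step---it simply writes ``by~\eqref{asymptotic behaviour of harmonic function 2}, $D^{(\beta)}_n\sim D_n+\beta W_n$''---whereas you unpack this by splitting at a level $K$ and invoking a uniform version of Proposition~\ref{harmonic function}(5); your subsequence justification for the uniform bound is valid (if it failed one could build a single sequence $y_n\to\infty$ violating~\eqref{asymptotic behaviour of harmonic function 2}), so your treatment is in fact a bit more careful than the paper's on this point.
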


The proof see the appendix.

\begin{remark}
	In proving our main Theorem, indeed, we also show that the following two statements are equivalent:
	
	$\left(i\right)$ There exists $\beta\geq0$ such that $D^{(\beta)}_{n}$ is $L^{1}\left(\mathbb{P}_{\xi}\right)$ convergence for almost all $\xi$.
	
	$\left(ii\right)$ For any $\beta\geq0$, $D^{(\beta)}_{n}$ is $L^{1}\left(\mathbb{P}_{\xi}\right)$ convergence for almost all $\xi$.
\end{remark}

Since $\left(D^{(\beta)}_{n},n\geq0\right)$ is a non-negative martingale with $\mathbb{E}_{\xi,a}\left(D^{(\beta)}_{n}\right)=U\left(\xi,a+\beta\right)e^{-a}$, it follows from Kolmogorov’s extension theorem that there exists a unique probability measure $\mathbb{Q}^{(\beta)}_{\xi,a}$ on $\mathscr{F}_{\infty}:=\vee_{n\geq0}\mathscr{F}_{n}$ such that for all $n\geq1$,
\begin{equation}\label{definition of quenched Q}
	\frac{d\mathbb{Q}^{(\beta)}_{\xi,a}}{d\mathbb{P}_{\xi,a}}\bigg|_{\mathscr{F}_{n}}:=\frac{D^{(\beta)}_{n}}{U\left(\xi,a+\beta\right)e^{-a}}.
\end{equation}
An intuitive description of the new probability is presented in the next subsection.

\subsection{Spinal decomposition of the time-inhomogeneous branching random walk}

This subsection is devoted to the proof of a time-inhomogeneous version of the spinal decomposition of the branching random walk. The spinal decomposition has been introduced by Lyons, Pemantle and Peres to study Galton-Watson processes in \cite{LPP95}. This result was then adapted by Lyons \cite{Lyo97} to studying the additive martingale for the branching random walk. Later, Biggins and Kyprianou \cite{BK04} extended this approach to treat general martingales based on additive functional of multitype branching.

The spinal decomposition gives an alternative description of the law of a branching random walk biased by a non-negative martingale as a branching random walk with a special individual called the ``spine''. Now, we introduce the time-inhomogeneous branching random walk with a spine $w^{(\beta)}=\left(w^{(\beta)}_{n},n\geq0\right)$ as follows. The process starts with one particle $w^{(\beta)}_{0}$ at position $V\left(w^{(\beta)}_{0}\right)=a$. It dies at time $1$ and gives birth to children distributed as $\hat{L}^{(\beta)}_{1,a}$ whose distribution is the law of $L_{1}$ under $\mathbb{Q}^{(\beta)}_{\xi,a}$. The particle $u$ is chosen as the spine element $w^{(\beta)}_{1}$ among the children of $w^{(\beta)}_{0}$ with probability proportional to $U\left(\theta\xi,V(u)\right)e^{-V(u)}\mathbf{1}_{\left\{V(u)\geq-\beta\right\}}$, while all other children are normal particles. For any $n\geq1$, each particle alive at generation $n$ dies at time $n+1$ and gives birth independently to children. The children of normal particle $z$ is distributed as $L_{n+1,V(z)}$ (i.e. $L_{n+1}$ under $\mathbb{P}_{\xi,V(z)}$), while the spine element $w^{(\beta)}_{n}$ produces according to the point process $\hat{L}^{(\beta)}_{n+1,V(w^{(\beta)}_{n})}$ which distributed as $L_{n+1}$ under $\mathbb{Q}^{(\beta)}_{\xi,V(w^{(\beta)}_{n})}$, and the particle $v$ is chosen as the spine element $w^{(\beta)}_{n+1}$ among the children of $w^{(\beta)}_{n}$ with proportional to $U\left(\theta^{n+1}\xi,V(v)+\beta\right)e^{-V(v)}\mathbf{1}_{\left\{\min_{0\leq k\leq n+1}V(v_{k})\geq-\beta\right\}}$, all other children are normal particles. The process goes on as described as above. We still denote by $\mathbb{T}$ the genealogical tree. Let us denote by $\hat{\mathbb{P}}^{(\beta)}_{\xi,a}$ the law of the new process, it is a probability on the product space between the space of all marked trees and the space of all infinite spine.

The following spinal decomposition consists of an alternative construction of the law $\mathbb{Q}^{(\beta)}_{\xi,a}$ as the projection of the law $\hat{\mathbb{P}}^{(\beta)}_{\xi,a}$ on the space of all marked trees. By an abuse of notation, the marginal law of $\hat{\mathbb{P}}^{(\beta)}_{\xi,a}$ on the space of marked trees is also denoted by $\hat{\mathbb{P}}^{(\beta)}_{\xi,a}$. This alternative construction allows us to study the mean convergence of the corresponding martingale in Section \ref{section 4}.

\begin{proposition}[Spinal decomposition]\label{spinal decomposition}
	The branching random walk umder $\mathbb{Q}^{(\beta)}_{\xi,a}$ is distributed as $\hat{\mathbb{P}}^{(\beta)}_{\xi,a}$.
\end{proposition}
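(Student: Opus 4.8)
The plan is to prove the spinal decomposition by induction on the generation $n$, showing that for every $n$ the joint law of the marked tree truncated at level $n$ (together with the spine up to level $n$) is the same under $\mathbb{Q}^{(\beta)}_{\xi,a}$ (with a spine chosen a posteriori) and under $\hat{\mathbb{P}}^{(\beta)}_{\xi,a}$. The key observation is that under $\hat{\mathbb{P}}^{(\beta)}_{\xi,a}$ the spine $w^{(\beta)}_n$ is, by construction, a size-biased-type pick: given the marked tree up to generation $n$, the conditional probability that a particle $u$ with $|u|=n$ is the spine is proportional to $U(\theta^n\xi, V(u)+\beta)e^{-V(u)}\mathbf{1}_{\{\min_{0\le k\le n}V(u_k)\ge-\beta\}}$. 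So the natural strategy is to define, on the space of marked trees with a distinguished spine ray, the reference measure obtained from $\mathbb{P}_{\xi,a}$ by choosing the spine uniformly-at-random according to these weights, and to compute its Radon--Nikodym derivative against $\hat{\mathbb{P}}^{(\beta)}_{\xi,a}$.

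First I would set up notation for the ``spine-decorated'' space: a point is a pair $(\mathbf{t}, w)$ where $\mathbf{t}$ is a marked tree and $w=(w_n)$ an infinite ray. On $\mathscr{F}_n$ I would define $\hat{\mathbb{P}}^{(\beta)}_{\xi,a}$ explicitly by the branching rules in the statement: at each step the spine reproduces according to the $\mathbb{Q}^{(\beta)}$-biased point process, the other particles reproduce according to $\mathbb{P}_\xi$, and the next spine vertex is chosen with the stated weights. The first real computation is to check that the $\mathbb{Q}^{(\beta)}_{\xi,a}$-law of $L_1$, call it $\hat L^{(\beta)}_{1,a}$, has density with respect to the $\mathbb{P}_{\xi,a}$-law of $L_1$ equal to $\frac{1}{U(\xi,a+\beta)e^{-a}}\sum_{|u|=1}U(\theta\xi,V(u)+\beta)e^{-V(u)}\mathbf{1}_{\{V(u)\ge-\beta\}}\mathbf{1}_{\{a\ge-\beta\}}$; this is just unwinding the definition \eqref{definition of quenched Q} at $n=1$ together with $D^{(\beta)}_0=U(\xi,a+\beta)e^{-a}$. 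Combining this with the spine-selection weights, one sees that summing the one-step $\hat{\mathbb{P}}^{(\beta)}_{\xi,a}$-density over the possible positions of the spine vertex $w_1$ telescopes and produces exactly the density $D^{(\beta)}_1/\big(U(\xi,a+\beta)e^{-a}\big)$ of $\mathbb{Q}^{(\beta)}_{\xi,a}$ on $\mathscr{F}_1$. This is the base case; moreover it shows that, conditionally on $\mathscr{F}_1$ under $\hat{\mathbb{P}}^{(\beta)}_{\xi,a}$, the spine vertex $w_1$ is distributed proportionally to the stated weights.

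The inductive step is the crux. Assume that on $\mathscr{F}_n$ the marginal of $\hat{\mathbb{P}}^{(\beta)}_{\xi,a}$ on marked trees equals $\mathbb{Q}^{(\beta)}_{\xi,a}$, and that conditionally on $\mathscr{F}_n$ the spine $w_n$ is chosen with probability proportional to the generation-$n$ weights, whose normalising constant is precisely $D^{(\beta)}_n$. Then, applying the one-step analysis along the spine and using the branching property (normal subtrees are independent $\mathbb{P}_\xi$-branching random walks, hence contribute no change of measure) for the non-spine children, the $\hat{\mathbb{P}}^{(\beta)}_{\xi,a}$-density on $\mathscr{F}_{n+1}$ relative to $\mathbb{P}_{\xi,a}$, after summing over the position of $w_{n+1}$, becomes $D^{(\beta)}_n \cdot \frac{U(\theta^{n+1}\xi, V(w_n)+\beta)e^{-\cdots}}{U(\theta^n\xi,V(w_n)+\beta)e^{-V(w_n)}}\cdot(\text{one-step }\mathbb{Q}^{(\beta)}\text{ bias at }w_n)$ which telescopes to $D^{(\beta)}_{n+1}/\big(U(\xi,a+\beta)e^{-a}\big)$; simultaneously one reads off that the conditional law of $w_{n+1}$ given $\mathscr{F}_{n+1}$ is again proportional to the generation-$(n+1)$ weights. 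Since $\mathscr{F}_\infty=\vee_n\mathscr{F}_n$, a standard $\pi$-system / monotone class argument upgrades the agreement on each $\mathscr{F}_n$ to agreement of the two measures on $\mathscr{F}_\infty$, which is exactly the assertion.

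I expect the main obstacle to be purely bookkeeping: carefully writing the one-step transition of $\hat{\mathbb{P}}^{(\beta)}_{\xi,a}$ as a product over \emph{all} children (spine child biased by the $\mathbb{Q}^{(\beta)}$-density, non-spine children contributing $1$), and verifying that when one sums this product over which child is designated the new spine, the weights reorganise into the ratio $D^{(\beta)}_{n+1}/D^{(\beta)}_n$ — i.e.\ that the local spine-selection weights are exactly the ``right'' ones to make the telescoping work. The one genuinely substantive input, beyond definitions, is Proposition \ref{harmonic function}(2)--(3): the quenched harmonic identity $U(\xi,y)=\mathbb{E}_\xi[U(\theta\xi,y+S_1)\mathbf{1}_{\{\tau_y>1\}}]$ (equivalently the martingale property) is what guarantees that $\hat L^{(\beta)}_{n+1,\cdot}$ is a genuine probability measure, i.e.\ that the $\mathbb{Q}^{(\beta)}$-density along the spine integrates to $1$; this is where the boundary condition \eqref{boundary condition} and the many-to-one lemma enter implicitly. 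Because this is precisely the structure treated by Biggins and Kyprianou \cite{BK04} (additive functionals of multitype branching), the argument is a time-inhomogeneous transcription of theirs, and I would organise it so that the environment shift $\theta^n\xi$ plays the role of the ``type'' at generation $n$.
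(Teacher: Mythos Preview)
Your proposal is correct and follows essentially the same approach as the paper: both proceed by induction on $n$, computing the Radon--Nikodym derivative of $\hat{\mathbb{P}}^{(\beta)}_{\xi,a}$ with respect to $\mathbb{P}_{\xi,a}$ on $\mathscr{F}_n$ and identifying it with $D^{(\beta)}_n/(U(\xi,a+\beta)e^{-a})$, using the spine-selection weights and the quenched harmonic property to make the one-step ratios telescope. The only organisational difference is that the paper packages the induction hypothesis as the single identity $\mathbb{E}_{\hat{\mathbb{P}}^{(\beta)}_{\xi,a}}\bigl[\mathbf{1}_{\{w^{(\beta)}_n=v\}}\prod_{|u|\le n}g_u\bigr]=\mathbb{E}_{\xi,a}\bigl[\frac{D^{(\beta)}_n(v)}{U(\xi,a+\beta)e^{-a}}\prod_{|u|\le n}g_u\bigr]$ for each fixed $v$ with $|v|=n$ (decomposing the product along the ancestral path of $v$ and conditioning on the spine-and-siblings filtration $\mathscr{G}^{(\beta)}_{n-1}$), whereas you phrase it as two coupled statements (marginal agreement on $\mathscr{F}_n$ plus the conditional law of $w^{(\beta)}_n$ given $\mathscr{F}_n$); these are equivalent formulations of the same joint-law claim.
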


The proof see the appendix.

\

Thanks to Proposition \ref{spinal decomposition}, we will identify the branching random walk under $\mathbb{Q}^{(\beta)}_{\xi,a}$ with $\hat{\mathbb{P}}^{(\beta)}_{\xi,a}$ in the following.

\begin{proposition}[Law of the spine]\label{law of the spine}
	Let the spine $w^{(\beta)}=\left(w^{(\beta)}_{n}\right)$ and probability $\mathbb{Q}^{(\beta)}_{\xi,a}$ be defined as above, we have
	
	$\left(1\right)$ For any $n$ and any vertex $v\in\mathbb{T}$ with $|v|=n$, we have
	$$\mathbb{Q}^{(\beta)}_{\xi,a}\left(w^{(\beta)}_{n}=v\mid \mathscr{F}_{n}\right)=\frac{U\left(\theta^{n}\xi,V(v)+\beta\right)e^{-V(v)}\mathbf{1}_{\left\{\min_{0\leq k\leq n}V(v_{k})\geq-\beta\right\}}}{D^{(\beta)}_{n}}.$$
	
	$\left(2\right)$ The process $\left(V(w^{(\beta)}_{n}),n\geq0\right)$ under $\mathbb{Q}^{(\beta)}_{\xi,a}$ is distributed as the random walk $\left(S_{n},n\geq0\right)$ under $\mathbb{P}_{\xi,a}$ conditioned to stay in $[-\beta,\infty)$. Or equivalently, for all $n$ and any measurable function $f:\mathbb{R}^{n+1}\to\mathbb{R}_{+}$, we have $\mathbf{P}\text{-a.s.}$
	\begin{equation}\label{law of the spine 1}
		\mathbb{E}_{\mathbb{Q}^{(\beta)}_{\xi,a}}\left[f\left(V(w^{(\beta)}_{0}), \cdots, V(w^{(\beta)}_{n})\right)\right]=\mathbb{E}_{\xi, a}\left[f\left(S_{0}, \cdots, S_{n}\right)\frac{U(\theta^{n}\xi,S_{n}+\beta)}{U(\xi,a+\beta)}\mathbf{1}_{\left\{\min_{0\leq k\leq n}S_{k}\geq-\beta\right\}}\right].
	\end{equation}
\end{proposition}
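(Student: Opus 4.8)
The plan is to derive both parts of Proposition \ref{law of the spine} from the spinal decomposition (Proposition \ref{spinal decomposition}), which identifies the branching random walk under $\mathbb{Q}^{(\beta)}_{\xi,a}$ with $\hat{\mathbb{P}}^{(\beta)}_{\xi,a}$, together with the change-of-measure formula \eqref{definition of quenched Q} and the quenched harmonic property of $U$ from Proposition \ref{harmonic function}.

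For part (1), I would first observe that by definition of $\hat{\mathbb{P}}^{(\beta)}_{\xi,a}$, at each step the spine child is selected among the offspring of the current spine with probability proportional to the weight $U(\theta^{k}\xi, V(\cdot)+\beta)e^{-V(\cdot)}\mathbf{1}_{\{\cdot\geq -\beta\}}$. Iterating this selection rule along a path from the root to a vertex $v$ with $|v|=n$, the conditional probability that $w^{(\beta)}_n = v$, given the whole marked tree $\mathscr{F}_n$, telescopes: the normalizing constants at successive generations cancel against the numerators, leaving exactly the ratio of $U(\theta^{n}\xi,V(v)+\beta)e^{-V(v)}\mathbf{1}_{\{\min_{0\le k\le n}V(v_k)\ge -\beta\}}$ to the sum of such weights over all $|v|=n$, which is precisely $D^{(\beta)}_n$. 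The barrier indicator appears because a vertex can only be on the spine if all its ancestors stayed above $-\beta$. This telescoping should be stated carefully but is essentially bookkeeping.

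For part (2), I would compute $\mathbb{E}_{\mathbb{Q}^{(\beta)}_{\xi,a}}[f(V(w^{(\beta)}_0),\dots,V(w^{(\beta)}_n))]$ by first conditioning on $\mathscr{F}_n$ and using part (1):
\begin{equation}\nonumber
\mathbb{E}_{\mathbb{Q}^{(\beta)}_{\xi,a}}\!\left[f\big(V(w^{(\beta)}_0),\dots,V(w^{(\beta)}_n)\big)\right]
=\mathbb{E}_{\mathbb{Q}^{(\beta)}_{\xi,a}}\!\left[\sum_{|v|=n}f\big(V(v_0),\dots,V(v_n)\big)\frac{U(\theta^{n}\xi,V(v)+\beta)e^{-V(v)}\mathbf{1}_{\{\min_{0\le k\le n}V(v_k)\ge -\beta\}}}{D^{(\beta)}_n}\right].
\end{equation}
Then I would undo the change of measure via \eqref{definition of quenched Q}, turning $\mathbb{E}_{\mathbb{Q}^{(\beta)}_{\xi,a}}[\,\cdot\,]$ into $\frac{1}{U(\xi,a+\beta)e^{-a}}\mathbb{E}_{\xi,a}[D^{(\beta)}_n\,\cdot\,]$; the two occurrences of $D^{(\beta)}_n$ cancel, leaving
\begin{equation}\nonumber
\frac{1}{U(\xi,a+\beta)e^{-a}}\,\mathbb{E}_{\xi,a}\!\left[\sum_{|v|=n}f\big(V(v_0),\dots,V(v_n)\big)U(\theta^{n}\xi,V(v)+\beta)e^{-V(v)}\mathbf{1}_{\{\min_{0\le k\le n}V(v_k)\ge -\beta\}}\right].
\end{equation}
Applying the many-to-one formula \eqref{many-to-one} with the test function $(x_1,\dots,x_n)\mapsto f(a,x_1,\dots,x_n)U(\theta^{n}\xi,x_n+\beta)\mathbf{1}_{\{\min x_k\ge -\beta\}}e^{-x_n}$ replaces the branching sum with $\mathbb{E}_{\xi,a}[e^{S_n-a}\cdot(\cdots)]$; the $e^{S_n-a}$ cancels the $e^{-V(u)}$ weight and the $e^{-a}$ in the denominator, yielding \eqref{law of the spine 1}. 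Comparing the right-hand side of \eqref{law of the spine 1} with the definition \eqref{Prob of conditioned rw} of $\zeta^{(\beta)}_n$ identifies the law of $(V(w^{(\beta)}_n))$ with that of the random walk conditioned to stay in $[-\beta,\infty)$.

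The main obstacle, and the only place requiring genuine care rather than bookkeeping, is part (1): making the telescoping selection-probability argument rigorous in the time-inhomogeneous setting, i.e. checking that the product of conditional spine-selection probabilities along a path, when summed over competing paths, collapses to the stated ratio, and that the environment shifts $\theta^k\xi$ are tracked correctly at each generation. One must also be attentive that the identity \eqref{law of the spine 1} is an $\mathbf{P}$-a.s. statement in $\xi$, so all the manipulations (the many-to-one formula, Proposition \ref{harmonic function}, the definition of $\mathbb{Q}^{(\beta)}_{\xi,a}$) are invoked on the common full-measure set of environments on which they hold. Everything else reduces to the change of measure and the many-to-one lemma already available in the excerpt.
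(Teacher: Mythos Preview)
Your argument for part~(2) is exactly what the paper does: condition on $\mathscr{F}_n$, apply part~(1), undo the change of measure \eqref{definition of quenched Q} so that the two copies of $D^{(\beta)}_n$ cancel, and then invoke the many-to-one lemma. Nothing to add there.

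For part~(1) your heuristic has a gap. You describe the computation as ``iterating the selection rule'' and claim the conditional probability given $\mathscr{F}_n$ telescopes. But the forward selection probabilities alone,
\[
\prod_{k=1}^n \frac{D^{(\beta)}_k(v_k)}{\sum_{u:\,\overleftarrow{u}=v_{k-1}} D^{(\beta)}_k(u)},
\]
do \emph{not} telescope: the denominator is a random sum over the actual offspring of $v_{k-1}$, not $D^{(\beta)}_{k-1}(v_{k-1})$. What makes the telescoping work is that under $\hat{\mathbb{P}}^{(\beta)}_{\xi,a}$ the spine vertex also \emph{reproduces} according to the tilted law with Radon--Nikodym derivative $\big(\sum_{u} D^{(\beta)}_k(u)\big)/D^{(\beta)}_{k-1}(v_{k-1})$; this tilt contributes precisely the factor that cancels the selection denominator, leaving the telescoping product $\prod_k D^{(\beta)}_k(v_k)/D^{(\beta)}_{k-1}(v_{k-1}) = D^{(\beta)}_n(v)/D^{(\beta)}_0$. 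Your write-up mentions only the selection step, not the reproduction tilt, so as stated the argument is incomplete.

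The paper avoids redoing this computation: inside the proof of Proposition~\ref{spinal decomposition} it establishes by induction the identity
\[
\mathbb{E}_{\hat{\mathbb{P}}^{(\beta)}_{\xi,a}}\Big[\mathbf{1}_{\{w^{(\beta)}_n=v\}}\prod_{|u|\le n}g_u(\xi,V(u))\Big]
=\mathbb{E}_{\xi,a}\Big[\frac{D^{(\beta)}_n(v)}{U(\xi,a+\beta)e^{-a}}\prod_{|u|\le n}g_u(\xi,V(u))\Big],
\]
which already encodes the combined effect of tilt and selection. Part~(1) then follows in one line by combining this with \eqref{definition of quenched Q}. So once you repair your telescoping (i.e.\ include the offspring tilt), you are effectively reproving that identity, and the two approaches coincide.
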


The proof see the appendix.

Note that $\mathbb{Q}^{(\beta)}_{\xi,a}\left(D^{(\beta)}_{n}>0\right)=\mathbb{E}_{\xi,a}\left[\frac{D^{(\beta)}_{n}}{U\left(\xi,a+\beta\right)e^{-a}}\right]=1$, the right-hand side of identity in Proposition \ref{law of the spine} (1) makes sense $\mathbb{Q}^{(\beta)}_{\xi,a}\text{-a.s.}$. For (2), by (\ref{Prob of conditioned rw}) and $\left(\ref{law of the spine 1}\right)$, we have the identity:
\begin{equation}\label{law of the spine 2}
	\mathbb{E}_{\mathbb{Q}^{(\beta)}_{\xi,a}}\left[f\left(V(w^{(\beta)}_{0}), \cdots, V(w^{(\beta)}_{n})\right)\right]=\mathbb{E}_{\xi, a}\left[f\left(\zeta^{(\beta)}_{0}, \cdots, \zeta^{(\beta)}_{n}\right)\right].
\end{equation}


\section{Proof of Theorem \ref{main result}}\label{section 4}

Lemma \ref{connection} $\left(1\right)$ yields Theorem \ref{main result} $\left(1\right)$. In this section, under the assumptions $\left(\ref{ass1}\right)$, $\left(\ref{boundary condition}\right)$ and $\left(\ref{2 more moment}\right)$, we prove that (\ref{iff}) is a necessary and sufficient condition for $L^{1}\left(\mathbb{P}_{\xi}\right)$ convergence of the truncated derivative martingale $\left(D^{(\beta)}_{n},n\geq0\right)$ for almost all $\xi$. Then by Lemma \ref{connection} $\left(2\right)$ and $\left(3\right)$, this condition is equivalent to the non-degeneracy of the limit $D_{\infty}$ of the derivative martingale, which proves Theorem \ref{main result} $\left(2\right)$.

Following the general treatment of Biggins and Kyprianou \cite{BK04} for multitype branching processes, which they use to obtain the mean convergence of the martingales produced by the mean-harmonic function, we can prove the following result (Proposition \ref{BK th 2.1}) considering the mean convergence of the truncated derivative martingale.

Recalling that $\overleftarrow{u}$ is the parent of $u$, for any $u\in\mathbb{T} \backslash \left\{\varnothing\right\}$, we define its relative position by
$$\Delta V(u):=V(u)-V(\overleftarrow{u}).$$
Under $\mathbb{P}_{\xi,\zeta^{(\beta)}_{n}}$, we define
\begin{equation}\label{tilde{X}}
	\begin{aligned}
		\tilde{X}:=&\frac{\sum_{|u|=1}U\left(\theta^{n+1}\xi,\zeta^{(\beta)}_{n}+\Delta V(u)+\beta\right)e^{-\zeta^{(\beta)}_{n}-\Delta V(u)}\mathbf{1}_{\left\{\zeta^{(\beta)}_{n}+\Delta V(u)\geq-\beta\right\}}}{U\left(\theta^{n}\xi,\zeta^{(\beta)}_{n}+\beta\right)e^{-\zeta^{(\beta)}_{n}}}\\
		=&\frac{\sum_{|u|=1}U\left(\theta^{n+1}\xi,\zeta^{(\beta)}_{n}+\Delta V(u)+\beta\right)e^{-\Delta V(u)}\mathbf{1}_{\left\{\zeta^{(\beta)}_{n}+\Delta V(u)\geq-\beta\right\}}}{U\left(\theta^{n}\xi,\zeta^{(\beta)}_{n}+\beta\right)},
	\end{aligned}
\end{equation}
where $\left(\Delta V(u),|u|=1\right)$ is independent of $\zeta^{(\beta)}_{n}$ under $\mathbb{P}_{\xi}$.

\begin{proposition}\label{BK th 2.1}
	Let $\zeta^{(\beta)}_{n}$ be defined as $\left(\ref{Prob of conditioned rw}\right)$, for all $\beta\geq0$, we have
	
	$\left(1\right)$ If for almost all $\xi$,
	\begin{equation}\nonumber
		\sum_{n=1}^{\infty}\mathbb{E}_{\xi,\zeta^{(\beta)}_{n}}\left[\tilde{X}\left(\left(U(\theta^{n}\xi,\zeta^{(\beta)}_{n}+\beta)e^{-\zeta^{(\beta)}_{n}}\tilde{X}\right) \wedge 1\right)\right]U(\xi,\beta)<\infty, ~~\mathbb{P}_{\xi}\text{-a.s.},
	\end{equation}
	then $\mathbb{E}_{\xi}\left[D^{(\beta)}_{\infty}\right]=U(\xi,\beta), ~\mathbf{P}\text{-a.s.}$.
	
	$\left(2\right)$ If for any $c\geq1$,
	\begin{equation}\nonumber
		\sum_{n=1}^{\infty}\mathbb{E}_{\zeta^{(\beta)}_{n}}\left[\tilde{X}\mathbf{1}_{\left\{U(\theta^{n}\xi,\zeta^{(\beta)}_{n}+\beta)e^{-\zeta^{(\beta)}_{n}}\tilde{X}\geq c\right\}}\right]U(\xi,\beta)=\infty, ~~\mathbb{P}\text{-a.s.},
	\end{equation}
	then $\mathbb{E}\left[D^{(\beta)}_{\infty}\right]=0$.
\end{proposition}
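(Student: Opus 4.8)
\textbf{Proof proposal for Proposition \ref{BK th 2.1}.}

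The plan is to follow the general truncated second-moment / spinal decomposition argument of Biggins and Kyprianou \cite{BK04}, adapted to the quenched setting via the change of measure $\mathbb{Q}^{(\beta)}_{\xi,a}$ defined in (\ref{definition of quenched Q}) and the spine description from Propositions \ref{spinal decomposition} and \ref{law of the spine}. Recall that $D^{(\beta)}_n$ converges $L^1(\mathbb{P}_\xi)$ if and only if $\mathbb{E}_\xi[D^{(\beta)}_\infty]=U(\xi,\beta)$, and that by standard martingale theory this is equivalent to the event $\{\limsup_n D^{(\beta)}_n<\infty\}$ having full measure under $\mathbb{Q}^{(\beta)}_\xi$ (the Biggins/Durrett--Liggett $0$--$1$ type dichotomy); conversely $\mathbb{E}_\xi[D^{(\beta)}_\infty]=0$ is equivalent to $D^{(\beta)}_n\to\infty$ $\mathbb{Q}^{(\beta)}_\xi$-a.s. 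So the proof reduces to controlling the growth of $D^{(\beta)}_n$ along the spine.

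For part (1), under $\mathbb{Q}^{(\beta)}_{\xi,a}$ the spine $\left(V(w^{(\beta)}_n)\right)$ is distributed as $\zeta^{(\beta)}_n$ by (\ref{law of the spine 2}), and at each step $n$ the spine's siblings contribute to $D^{(\beta)}_{n+1}$ an amount whose $\mathscr{F}_n$-conditional structure is governed precisely by $\tilde X$ as defined in (\ref{tilde{X}}): more precisely, writing $\Delta_{n+1}^{(\beta)}$ for the increment of the martingale ratio $D^{(\beta)}_{n+1}/D^{(\beta)}_n$ coming off the spine, its $\mathbb{Q}^{(\beta)}_\xi$-conditional law given $\mathscr{F}_n$ is a size-biased version of $\tilde X$ evaluated at $\zeta^{(\beta)}_n$, scaled by $U(\theta^n\xi,\zeta^{(\beta)}_n+\beta)e^{-\zeta^{(\beta)}_n}$. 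The standard device (as in \cite{BK04}, \cite{Aid13}) is then: $D^{(\beta)}_n$ stays bounded under $\mathbb{Q}^{(\beta)}_\xi$ if $\sum_n \mathbb{Q}^{(\beta)}_\xi\!\left(\text{the spine-sibling increment at step }n \text{ exceeds }1 \mid \mathscr{F}_n\right)<\infty$, which by the size-biasing identity is exactly the series $\sum_n \mathbb{E}_{\xi,\zeta^{(\beta)}_n}\!\left[\tilde X\bigl((U(\theta^n\xi,\zeta^{(\beta)}_n+\beta)e^{-\zeta^{(\beta)}_n}\tilde X)\wedge 1\bigr)\right]U(\xi,\beta)$ up to the normalization $U(\xi,\beta)$; finiteness of this $\mathbb{P}_\xi$-a.s. forces, via Borel--Cantelli under $\mathbb{Q}^{(\beta)}_\xi$ together with a martingale truncation argument on the ``small'' increments, that $\limsup_n D^{(\beta)}_n<\infty$ $\mathbb{Q}^{(\beta)}_\xi$-a.s., hence $\mathbb{E}_\xi[D^{(\beta)}_\infty]=U(\xi,\beta)$. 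One has to take a little care: the hypothesis is stated for $\mathbb{P}_\xi$-a.s. realizations of $\zeta^{(\beta)}_n$, but since under $\mathbb{Q}^{(\beta)}_\xi$ the spine has exactly the law of $\zeta^{(\beta)}$, the $\mathbb{P}_\xi$-a.s. statement transfers to a $\mathbb{Q}^{(\beta)}_\xi$-a.s. statement on the spine trajectory.

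For part (2), the hypothesis is annealed ($\mathbb{P}$-a.s., for all $c\ge1$), so one works under the annealed biased measure $\tilde{\mathbb{Q}}^{(\beta)}(\cdot):=\int \frac{U(\xi,\beta)}{U(\beta)}\mathbb{Q}^{(\beta)}_\xi(\cdot)\,d\mathbf{P}$ (analogous to $\tilde{\mathbb{P}}$), under which the spine again has the annealed conditioned-random-walk law. The divergence $\sum_n \mathbb{E}_{\zeta^{(\beta)}_n}[\tilde X \mathbf{1}_{\{U(\theta^n\xi,\zeta^{(\beta)}_n+\beta)e^{-\zeta^{(\beta)}_n}\tilde X\ge c\}}]U(\xi,\beta)=\infty$ for every $c$ means, via the size-biasing identity again, that $\sum_n \tilde{\mathbb{Q}}^{(\beta)}(\Delta_{n+1}^{(\beta)}\ge c\mid \mathscr{F}_n)=\infty$; a conditional Borel--Cantelli argument (Lévy's extension) then gives that infinitely often the spine-sibling increment exceeds $c$, and letting $c\to\infty$ along a subsequence shows $D^{(\beta)}_n\to\infty$ $\tilde{\mathbb{Q}}^{(\beta)}$-a.s., equivalently $\mathbb{Q}^{(\beta)}_\xi$-a.s. for $\mathbf{P}$-a.e. $\xi$; by the dichotomy this forces $\mathbb{E}[D^{(\beta)}_\infty]=0$. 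I expect the main obstacle to be the bookkeeping that converts the abstract ``spine-sibling increment'' into the explicit functional $\tilde X$ with the correct weight $U(\theta^n\xi,\zeta^{(\beta)}_n+\beta)e^{-\zeta^{(\beta)}_n}$ — this is where the time-inhomogeneity of the environment enters and where the shift operators $\theta^n$ must be tracked carefully — together with justifying the passage between quenched and annealed biased measures and the measurability/integrability needed to apply the conditional Borel--Cantelli lemma in both directions.
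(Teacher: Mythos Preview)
Your proposal is correct and follows essentially the same route as the paper. For part (1) the paper in fact gives even less detail than you do: it simply invokes the proof of Theorem~2.1 in \cite{BK04}, relying on the quenched harmonic function and the spinal decomposition already established, which is exactly the truncated second-moment / Borel--Cantelli scheme you outline. For part (2) the paper also passes to an annealed biased measure and applies the conditional Borel--Cantelli lemma to the spine increments, citing Athreya \cite{Ath00} for the dichotomy $\mathbb{E}[D^{(\beta)}_\infty]=0 \Leftrightarrow \mathbb{Q}(D^{(\beta)}_\infty=\infty)=1$; the only cosmetic difference is that the paper uses the unweighted average $\mathbb{Q}^{(\beta)}_a(\cdot):=\mathbf{E}[\mathbb{Q}^{(\beta)}_{\xi,a}(\cdot)]$ rather than your $U(\xi,\beta)/U(\beta)$-tilted version $\tilde{\mathbb{Q}}^{(\beta)}$, and conditions on the spine filtration $\mathscr{G}^{(\beta)}_n$ rather than $\mathscr{F}_n$ --- both choices lead to the same series via the size-biasing identity you describe.
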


\begin{proof}
	$\left(1\right)$ Thanks to the harmonic function and the spinal decomposition that we have established in the former sections, this result follows by the same argument as the proof of Theorem 2.1 in \cite{BK04}.
	
	$\left(2\right)$ For the degeneracy case, it is slightly different from the proof of $\left(1\right)$ since we use the annealed probability instead of quenched probability in the expression. We only point out the changes that should be made in the proof because the main idea is the sane as \cite{BK04}. Let
	$$\mathbb{Q}^{(\beta)}_{a}(\cdot):=\mathbf{E}\left[\mathbb{Q}^{(\beta)}_{\xi,a}(\cdot)\right],$$
	then we have
	\begin{equation}\nonumber
		\frac{d\mathbb{Q}^{(\beta)}_{a}}{d\mathbb{P}_{a}}\bigg|_{\mathscr{F}_{n}}=\frac{D^{(\beta)}_{n}}{U\left(\xi,a+\beta\right)e^{-a}}.
	\end{equation}
    It follows from the Corollary 1 of Athreya \cite{Ath00} that
    $$\mathbb{E}_{\mathbb{Q}}\left[D^{(\beta)}_{\infty}\right]=0\Longleftrightarrow \mathbb{Q}\left(D^{(\beta)}_{\infty}=\infty\right)=1.$$
    Note that, by the spinal decomposition,
    $$\mathbb{Q}\left(D^{(\beta)}_{\infty}=\infty\right)\geq \mathbb{Q}\left(\limsup_{n\to\infty}U\left(\theta^{n}\xi,V\left(w^{(\beta)}_{n}\right)+\beta\right)e^{-V\left(w^{(\beta)}_{n}\right)}\tilde{X}\left(V\left(w^{(\beta)}_{n}\right)\right)=\infty\right).$$
    Thus, it suffices to prove that, for any $c\geq1$,
    $$\limsup_{n\to\infty}U\left(\theta^{n}\xi,V\left(w^{(\beta)}_{n}\right)+\beta\right)e^{-V\left(w^{(\beta)}_{n}\right)}\tilde{X}\left(V\left(w^{(\beta)}_{n}\right)\right)\geq c, ~~\mathbb{Q}\text{-a.s.}.$$
    Using the conditional Borel-Cantelli lemma, this is equivalent to show that, for any $c\geq1$,
    $$\sum_{n=1}^{\infty}\mathbb{Q}\left.\left(U\left(\theta^{n}\xi,V\left(w^{(\beta)}_{n}\right)+\beta\right)e^{-V\left(w^{(\beta)}_{n}\right)}\tilde{X}\left(V\left(w^{(\beta)}_{n}\right)\right)\geq c~\right| \mathscr{G}^{(\beta)}_{n}\right)=\infty,~~\mathbb{Q}\text{-a.s.},$$
    where $\left\{\mathscr{G}^{(\beta)}_{n}\right\}$ is the filtration containing all the information of the spine and its siblings. By the spinal decomposition and the definition of $\mathbb{Q}$, we get, for any $c\geq1$,
    $$\sum_{n=1}^{\infty}\mathbb{E}\left.\left(\tilde{X}\left(\zeta^{(\beta)}_{n}\right)\mathbf{1}_{\left\{U\left(\theta^{n}\xi,\zeta^{(\beta)}_{n}+\beta\right)e^{-\zeta^{(\beta)}_{n}}\tilde{X}\left(\zeta^{(\beta)}_{n}\right)\geq c\right\}}~\right| \zeta^{(\beta)}_{n}\right)=\infty,~~\mathbb{P}\text{-a.s.}.$$
\end{proof}

\subsection{The sufficient condition}

In this subsection, we show that for all $\beta\geq0$, $D^{(\beta)}_{n}$ converges in $L^{1}\left(\mathbb{P}_{\xi}\right)$ to $D^{(\beta)}_{\infty}$ for almost all $\xi$ under the assumption (\ref{iff}).

\begin{lemma}\label{sufficiency}
	If $\left(\ref{iff}\right)$ holds, then for all $\beta\geq0$, $\mathbb{E}_{\xi}\left[D^{(\beta)}_{\infty}\right]=U(\xi,\beta), ~\mathbf{P}\text{-a.s.}$.
\end{lemma}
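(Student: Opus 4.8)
The plan is to verify the hypothesis of Proposition \ref{BK th 2.1}(1) under assumption (\ref{iff}), namely that for almost all $\xi$ the random series
$$\sum_{n=1}^{\infty}\mathbb{E}_{\xi,\zeta^{(\beta)}_{n}}\left[\tilde{X}\left(\left(U(\theta^{n}\xi,\zeta^{(\beta)}_{n}+\beta)e^{-\zeta^{(\beta)}_{n}}\tilde{X}\right)\wedge 1\right)\right]U(\xi,\beta)$$
is finite $\mathbb{P}_{\xi}$-a.s. Since the summands are non-negative, it suffices to show the series has finite expectation under the annealed law $\mathbb{P}$, which then forces almost-sure finiteness under $\mathbb{P}_\xi$ for $\mathbf{P}$-a.e. $\xi$. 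So the first step is to compute (or bound) the annealed expectation of the series and reduce everything to an integral against the renewal measure $\mathcal{R}^{(\beta)}(dx)$ via Lemma \ref{lem the integral of series of zeta_{n}}.

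Concretely, write $G(\xi,x):=\mathbb{E}_{\xi,x}\bigl[\tilde{X}\bigl((U(\xi,x+\beta)e^{-x}\tilde{X})\wedge 1\bigr)\bigr]$ (note that after taking the annealed expectation the shift $\theta^n\xi$ becomes irrelevant by stationarity, exactly as in the proof of Lemma \ref{lem the integral of series of zeta_{n}}), and apply (\ref{the integral of series of zeta_{n}}) to get that the annealed expectation of the series equals $\int_{-\beta}^{\infty}\mathbf{E}[G(\xi,x)]\,\mathcal{R}^{(\beta)}(dx)$. By (\ref{relation of renewal measure with L measure}) this is comparable to $\int_{0}^{\infty}\mathbf{E}[G(\xi,x-\beta)]\,dx$, so it remains to show $\int_{0}^{\infty}\mathbf{E}[G(\xi,x-\beta)]\,dx<\infty$. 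Using the definition (\ref{tilde{X}}) of $\tilde{X}$, the bound $U(\xi,y)\ge y$ and the asymptotics $U(\xi,y)\sim y$, together with the elementary inequality $t(t\wedge 1)\le t\wedge t^2$, one splits $G$ into a "small" part (where $U(\xi,x+\beta)e^{-x}\tilde{X}$ is bounded, contributing a second-moment-type term) and a "large" part (contributing a first-moment-type term with an indicator). Carrying this through via the many-to-one identity translates the $x$-integral of $\mathbf{E}[G(\xi,x-\beta)]$ into annealed expectations of sums over $|u|=1$ of quantities of the shape $V(u)e^{-V(u)}$ weighted by factors that, after integrating in $x$, produce logarithmic weights; the upshot is that finiteness is controlled precisely by $\mathbb{E}[Y\log_+^2 Y + Z\log_+ Z]$.

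The main obstacle I anticipate is the bookkeeping in the second step: one must carefully track how the conditioned-walk position $\zeta^{(\beta)}_n$ interacts with the increment $\Delta V(u)$ inside $\tilde{X}$, and in particular handle the regime where $\zeta^{(\beta)}_n$ is small (of order $1$) versus large, since the harmonic function $U(\xi,\cdot)$ is only asymptotically linear and is genuinely random for bounded arguments. Getting the two logarithmic powers to emerge correctly — the $\log_+^2$ from the contribution of $Y=\sum_{|u|=1}e^{-V(u)}$ and the single $\log_+$ from $Z=\sum_{|u|=1}V(u)e^{-V(u)}\mathbf{1}_{\{V(u)\ge 0\}}$ — requires splitting the truncation threshold dyadically and summing, much as in A\"{i}d\'{e}kon \cite{Aid13} and Chen \cite{Che15}, but now with the extra annealed average over $\xi$ and the moment assumption (\ref{2 more moment}) invoked to control the random prefactors $U(\theta^{n+1}\xi,\cdot)$ uniformly. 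Once the integral bound is established, Proposition \ref{BK th 2.1}(1) applies verbatim and yields $\mathbb{E}_\xi[D^{(\beta)}_\infty]=U(\xi,\beta)$, $\mathbf{P}$-a.s.
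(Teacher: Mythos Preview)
Your proposal is correct and follows essentially the same route as the paper: reduce to Proposition~\ref{BK th 2.1}(1), bound the annealed expectation of the series via Lemma~\ref{lem the integral of series of zeta_{n}} and (\ref{relation of renewal measure with L measure}), and show the resulting integral is controlled by $\mathbb{E}[Y\log_+^2 Y+Z\log_+ Z]$. The paper's execution is slightly more direct than what you sketch: rather than a dyadic decomposition, it first replaces $U(\theta^{n+1}\xi,\zeta^{(\beta)}_n+\Delta V(u)+\beta)$ by $\zeta^{(\beta)}_n+\Delta V(u)+\beta$ using (\ref{asymptotic behaviour of harmonic function 2}), then bounds $\tilde X$ by $2\max\{(\zeta^{(\beta)}_n+\beta)\tilde Y/U,\ \tilde Z/U\}$ and treats the two pieces separately, each reducing to an elementary one-variable integral after applying Lemma~\ref{lem the integral of series of zeta_{n}}.
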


\begin{proof}
	According to Proposition \ref{BK th 2.1} $\left(1\right)$, it suffices to prove that
	\begin{equation}\label{suff 1}
		\begin{aligned}
			&\mathbb{E}\left[Y\log^2_{+}Y+Z\log_{+}Z\right]<\infty\\
			\Longrightarrow~~ &\sum_{n=1}^{\infty}\mathbb{E}_{\xi,\zeta^{(\beta)}_{n}}\left[\tilde{X}\left(\left(U(\theta^{n}\xi,\zeta^{(\beta)}_{n}+\beta)e^{-\zeta^{(\beta)}_{n}}\tilde{X}\right) \wedge 1\right)\right]U(\xi,\beta)<\infty, ~~\mathbb{P}\text{-a.s.}.
		\end{aligned}		
	\end{equation}
    By $\left(\ref{asymptotic behaviour of harmonic function 2}\right)$ and for almost all $\xi$, $\zeta^{(\beta)}_{n}+\beta\to\infty$ $\mathbb{P}_{\xi}\text{-a.s.}$ as $n\to\infty$, we have, as $n\to\infty$,
    \begin{equation}\nonumber
    	\begin{aligned}    			&\frac{\sum_{|u|=1}U\left(\theta^{n+1}\xi,\zeta^{(\beta)}_{n}+\Delta V(u)+\beta\right)e^{-\Delta V(u)}\mathbf{1}_{\left\{\zeta^{(\beta)}_{n}+\Delta V(u)\geq -\beta\right\}}}{U\left(\theta^{n}\xi,\zeta^{(\beta)}_{n}+\beta\right)}\\
    	\sim&\frac{\sum_{|u|=1}\left(\zeta^{(\beta)}_{n}+\Delta V(u)+\beta\right)e^{-\Delta V(u)}\mathbf{1}_{\left\{\zeta^{(\beta)}_{n}+\Delta V(u)\geq -\beta\right\}}}{U\left(\theta^{n}\xi,\zeta^{(\beta)}_{n}+\beta\right)}.
    	\end{aligned}
    \end{equation}
    Since $\left(\zeta^{(\beta)}_{n}+\Delta V(u)+\beta\right)\mathbf{1}_{\left\{\zeta^{(\beta)}_{n}+\Delta V(u)\geq -\beta\right\}}\leq \zeta^{(\beta)}_{n}+\beta+\Delta V(u)\mathbf{1}_{\left\{\Delta V(u)\geq 0\right\}}$, we obtain
    \begin{equation}\nonumber
    	\begin{aligned}
    		&\frac{\sum_{|u|=1}\left(\zeta^{(\beta)}_{n}+\Delta V(u)+\beta\right)e^{-\Delta V(u)}\mathbf{1}_{\left\{\zeta^{(\beta)}_{n}+\Delta V(u)\geq -\beta\right\}}}{U\left(\theta^{n}\xi,\zeta^{(\beta)}_{n}+\beta\right)}\\
    		\leq&\frac{\sum_{|u|=1}\left(\zeta^{(\beta)}_{n}+\beta\right)e^{-\Delta V(u)}}{U\left(\theta^{n}\xi,\zeta^{(\beta)}_{n}+\beta\right)}+\frac{\sum_{|u|=1}\Delta V(u)e^{-\Delta V(u)}\mathbf{1}_{\left\{\Delta V(u)\geq 0\right\}}}{U\left(\theta^{n}\xi,\zeta^{(\beta)}_{n}+\beta\right)}\\
    		=&:\frac{\left(\zeta^{(\beta)}_{n}+\beta\right)\tilde{Y}(\xi_{n+1})}{U\left(\theta^{n}\xi,\zeta^{(\beta)}_{n}+\beta\right)}+\frac{\tilde{Z}(\xi_{n+1})}{U\left(\theta^{n}\xi,\zeta^{(\beta)}_{n}+\beta\right)}\\
    		\leq&2\max\left\{\frac{\left(\zeta^{(\beta)}_{n}+\beta\right)\tilde{Y}(\xi_{n+1})}{U\left(\theta^{n}\xi,\zeta^{(\beta)}_{n}+\beta\right)},\frac{\tilde{Z}(\xi_{n+1})}{U\left(\theta^{n}\xi,\zeta^{(\beta)}_{n}+\beta\right)}\right\},
    	\end{aligned}
    \end{equation}
    where $\left(\tilde{Y}(\xi_{n+1}),\tilde{Z}(\xi_{n+1})\right)$ is independent of $\zeta^{(\beta)}_{n}$ under $\mathbb{P}_{\xi}$.
    
    Therefore, we only need to show that
    $$\mathbb{E}\left[Y\log^2_{+}Y+Z\log_{+}Z\right]<\infty$$
    \begin{equation}\label{suff 2}
    	\Longrightarrow~~
    	\begin{cases}
    		\mathbb{E}\left\{\sum_{n=1}^{\infty}\mathbb{E}_{\xi,\zeta^{(\beta)}_{n}}\left[\frac{\left(\zeta^{(\beta)}_{n}+\beta\right)\tilde{Y}(\xi_{n+1})}{U\left(\theta^{n}\xi,\zeta^{(\beta)}_{n}+\beta\right)}\left(\left(e^{-\zeta^{(\beta)}_{n}}\left(\zeta^{(\beta)}_{n}+\beta\right)\tilde{Y}(\xi_{n+1})\right) \wedge 1\right)\right]U(\xi,\beta)\right\}<\infty,\\
    		\\    		 	\mathbb{E}\left\{\sum_{n=1}^{\infty}\mathbb{E}_{\xi,\zeta^{(\beta)}_{n}}\left[\frac{\tilde{Z}(\xi_{n+1})}{U\left(\theta^{n}\xi,\zeta^{(\beta)}_{n}+\beta\right)}\left(\left(e^{-\zeta^{(\beta)}_{n}}\tilde{Z}(\xi_{n+1})\right) \wedge 1\right)\right]U(\xi,\beta)\right\}<\infty,
    	\end{cases}
    \end{equation}
    which implies $\left(\ref{suff 1}\right)$.
    
    To prove the first term on the right hand side of $\left(\ref{suff 2}\right)$, by the inequality $e^{x/2}\geq x$ for all $x>0$, we have
    \begin{equation}\nonumber
    	\begin{aligned}
    		&\mathbb{E}\left\{\sum_{n=1}^{\infty}\mathbb{E}_{\xi,\zeta^{(\beta)}_{n}}\left[\frac{\left(\zeta^{(\beta)}_{n}+\beta\right)\tilde{Y}(\xi_{n+1})}{U\left(\theta^{n}\xi,\zeta^{(\beta)}_{n}+\beta\right)}\left(\left(e^{-\zeta^{(\beta)}_{n}}\left(\zeta^{(\beta)}_{n}+\beta\right)\tilde{Y}(\xi_{n+1})\right) \wedge1\right)\right]U(\xi,\beta)\right\}\\
    		\leq&\mathbb{E}\left\{\sum_{n=1}^{\infty}\mathbb{E}_{\xi,\zeta^{(\beta)}_{n}}\left[\frac{\left(\zeta^{(\beta)}_{n}+\beta\right)\tilde{Y}(\xi_{n+1})}{U\left(\theta^{n}\xi,\zeta^{(\beta)}_{n}+\beta\right)}\left(\left(e^{-\zeta^{(\beta)}_{n}/2+\beta/2}\tilde{Y}(\xi_{n+1})\right) \wedge1\right)\right]U(\xi,\beta)\right\}\\
    		\leq&\mathbb{E}\left\{\sum_{n=1}^{\infty}\mathbb{E}_{\xi}\left[\frac{\left(\zeta^{(\beta)}_{n}+\beta\right)e^{-\zeta^{(\beta)}_{n}/2+\beta/2}\tilde{Y}^{2}(\xi_{n+1})}{U\left(\theta^{n}\xi,\zeta^{(\beta)}_{n}+\beta\right)}\mathbf{1}_{\left\{\zeta^{(\beta)}_{n}\geq2\log\tilde{Y}(\xi_{n+1})+\beta\right\}}\bigg | \zeta^{(\beta)}_{n}\right]U(\xi,\beta)\right\}\\
    		&+\mathbb{E}\left\{\sum_{n=1}^{\infty}\mathbb{E}_{\xi}\left[\frac{\left(\zeta^{(\beta)}_{n}+\beta\right)\tilde{Y}(\xi_{n+1})}{U\left(\theta^{n}\xi,\zeta^{(\beta)}_{n}+\beta\right)}\mathbf{1}_{\left\{\zeta^{(\beta)}_{n}<2\log\tilde{Y}(\xi_{n+1})+\beta\right\}}\bigg | \zeta^{(\beta)}_{n}\right]U(\xi,\beta)\right\}.
    	\end{aligned}
    \end{equation}
    Hence, it follows by Lemma \ref{lem the integral of series of zeta_{n}} and $\left(\ref{relation of renewal measure with L measure}\right)$ that
    \begin{equation}\nonumber
    	\begin{aligned}
    		&\mathbb{E}\left\{\sum_{n=1}^{\infty}\mathbb{E}_{\xi,\zeta^{(\beta)}_{n}}\left[\frac{\left(\zeta^{(\beta)}_{n}+\beta\right)\tilde{Y}(\xi_{n+1})}{U\left(\theta^{n}\xi,\zeta^{(\beta)}_{n}+\beta\right)}\left(\left(e^{-\zeta^{(\beta)}_{n}}\left(\zeta^{(\beta)}_{n}+\beta\right)\tilde{Y}(\xi_{n+1})\right) \wedge1\right)\right]U(\xi,\beta)\right\}\\
    		\leq&\int_{-\beta}^{\infty}\mathbb{E}\left[\left(x+\beta\right)e^{-x/2+\beta/2}\tilde{Y}^{2}(\xi_{n+1})\mathbf{1}_{\left\{x\geq2\log\tilde{Y}(\xi_{n+1})+\beta\right\}}\right]\mathcal{R}^{(\beta)}(dx)\\
    		&+\int_{-\beta}^{\infty}\mathbb{E}\left[\left(x+\beta\right)\tilde{Y}(\xi_{n+1})\mathbf{1}_{\left\{x<2\log\tilde{Y}(\xi_{n+1})+\beta\right\}}\right]\mathcal{R}^{(\beta)}(dx)\\
    		\leq&c_{2}\int_{0}^{\infty}\mathbb{E}\left[xe^{-x/2+\beta}\tilde{Y}^{2}(\xi_{n+1})\mathbf{1}_{\left\{x\geq2\log\tilde{Y}(\xi_{n+1})+2\beta\right\}}\right]dx\\
    		&+c_{2}\int_{0}^{\infty}\mathbb{E}\left[x\tilde{Y}(\xi_{n+1})\mathbf{1}_{\left\{x<2\log\tilde{Y}(\xi_{n+1})+2\beta\right\}}\right]dx\\
    		=&c_{2}e^{\beta}\mathbb{E}\left[\tilde{Y}^{2}(\xi_{n+1})\int_{2\left(\log\tilde{Y}(\xi_{n+1})+\beta\right)_{+}}^{\infty}xe^{-x/2}\,dx\right]+c_{2}\mathbb{E}\left[\tilde{Y}(\xi_{n+1})\int_{0}^{2\left(\log\tilde{Y}(\xi_{n+1})+\beta\right)_{+}}x\,dx\right]\\
    		\leq&c_{1}(\beta)\mathbb{E}\left(\tilde{Y}(\xi_{n+1})\log_{+}\tilde{Y}(\xi_{n+1})\right)+c_{2}(\beta)\mathbb{E}\left(\tilde{Y}(\xi_{n+1})\log^2_{+}\tilde{Y}(\xi_{n+1})\right)\\
    		=&c_{1}(\beta)\mathbb{E}\left(Y\log_{+}Y\right)+c_{2}(\beta)\mathbb{E}\left(Y\log^2_{+}Y\right)\\
    		<&\infty.
    	\end{aligned}
    \end{equation}
    
    For the second term on the right hand side of $\left(\ref{suff 2}\right)$, by the same argument, we have
    \begin{equation}\nonumber
    	\begin{aligned}
    		&\mathbb{E}\left\{\sum_{n=1}^{\infty}\mathbb{E}_{\xi,\zeta^{(\beta)}_{n}}\left[\frac{\tilde{Z}(\xi_{n+1})}{U\left(\theta^{n}\xi,\zeta^{(\beta)}_{n}+\beta\right)}\left(\left(e^{-\zeta^{(\beta)}_{n}}\tilde{Z}(\xi_{n+1})\right) \wedge 1\right)\right]U(\xi,\beta)\right\}\\
    		\leq&c_{3}(\beta)\mathbb{E}\left(\tilde{Z}(\xi_{n+1})\log_{+}\tilde{Z}(\xi_{n+1})\right)\\
    		=&c_{3}(\beta)\mathbb{E}\left(Z\log_{+}Z\right)\\
    		<&\infty.
    	\end{aligned}
    \end{equation}
    This completes the proof of $\left(\ref{suff 2}\right)$ and hence Lemma \ref{sufficiency} is now proved.
\end{proof}

\begin{proof}[Proof of the sufficient condition of Theorem \ref{main result} $\left(2\right)$]
	Assume that (\ref{iff}) holds, by Lemma \ref{sufficiency}, for all $\beta\geq0$, $D^{(\beta)}_{n}$ is $L^{1}\left(\mathbb{P}_{\xi}\right)$ convergence for almost all $\xi$. Therefore, in view of Lemma \ref{connection} $\left(2\right)$, we prove that $D_{\infty}$ is non-degenerate for almost all $\xi$, which completes the proof of sufficiency.
\end{proof}

\subsection{The necessary condition}

In this subsection, we show that $D^{(\beta)}_{\infty}=0, ~\mathbb{P}\text{-a.s.}$ for all $\beta\geq0$ when (\ref{iff}) does not hold.

\begin{lemma}\label{necessity}
	If $\left(\ref{iff}\right)$ does not hold, then for all $\beta\geq0$, $\mathbb{E}\left[D^{(\beta)}_{\infty}\right]=0$, which is equivalent to $D^{(\beta)}_{\infty}=0, ~\mathbb{P}_{\xi}\text{-a.s.}$ for almost all $\xi$.
\end{lemma}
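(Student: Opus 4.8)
The plan is to show that when $\left(\ref{iff}\right)$ fails, the hypothesis of Proposition~\ref{BK th 2.1}~$(2)$ holds, so that $\mathbb{E}\left[D^{(\beta)}_{\infty}\right]=0$; the equivalence with $D^{(\beta)}_{\infty}=0$ a.s. then follows exactly as in the annealed version of the Athreya dichotomy already invoked in the proof of Proposition~\ref{BK th 2.1}~$(2)$ (a non-negative martingale limit with zero mean is a.s. zero). So the whole task reduces to verifying, for each $\beta\geq0$ and each $c\geq1$,
$$\sum_{n=1}^{\infty}\mathbb{E}_{\zeta^{(\beta)}_{n}}\left[\tilde{X}\,\mathbf{1}_{\left\{U(\theta^{n}\xi,\zeta^{(\beta)}_{n}+\beta)e^{-\zeta^{(\beta)}_{n}}\tilde{X}\geq c\right\}}\right]U(\xi,\beta)=\infty, \quad \mathbb{P}\text{-a.s.},$$
under the assumption $\mathbb{E}\left[Y\log^2_{+}Y+Z\log_{+}Z\right]=\infty$.

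First I would reduce $\tilde{X}$ to the two simpler building blocks used in Lemma~\ref{sufficiency}. Using $\left(\ref{asymptotic behaviour of harmonic function 2}\right)$ and $\zeta^{(\beta)}_{n}+\beta\to\infty$ $\mathbb{P}_{\xi}$-a.s., one has $U(\theta^{n+1}\xi,\zeta^{(\beta)}_{n}+\Delta V(u)+\beta)\sim \zeta^{(\beta)}_{n}+\Delta V(u)+\beta$ and $U(\theta^{n}\xi,\zeta^{(\beta)}_{n}+\beta)\sim\zeta^{(\beta)}_{n}+\beta$, so for large $n$ one can bound $\tilde{X}$ below by a constant multiple of $\tilde{Y}(\xi_{n+1})+\tilde{Z}(\xi_{n+1})/(\zeta^{(\beta)}_{n}+\beta)$ on a suitable event; keeping only whichever of the $Y$- or $Z$-term carries the divergence of $\mathbb{E}\left[Y\log^2_{+}Y+Z\log_{+}Z\right]$. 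The event $\{U(\theta^{n}\xi,\zeta^{(\beta)}_{n}+\beta)e^{-\zeta^{(\beta)}_{n}}\tilde{X}\geq c\}$ should then, after the asymptotics, translate (on the $Y$-branch) into roughly $\{(\zeta^{(\beta)}_{n}+\beta)e^{-\zeta^{(\beta)}_{n}}\tilde{Y}(\xi_{n+1})\geq c'\}$, i.e. $\zeta^{(\beta)}_{n}\lesssim \log\tilde{Y}(\xi_{n+1})$ together with $\tilde{Y}$ large; and similarly on the $Z$-branch into $\{e^{-\zeta^{(\beta)}_{n}}\tilde{Z}(\xi_{n+1})\geq c'\}$.

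Next I would compute the annealed expectation of the corresponding series using Lemma~\ref{lem the integral of series of zeta_{n}} and the renewal comparison $\left(\ref{relation of renewal measure with L measure}\right)$: the sum over $n$ of $\mathbb{E}_{\zeta^{(\beta)}_{n}}[\cdots]U(\xi,\beta)$ integrates against $\mathcal{R}^{(\beta)}(dx)$ and is bounded below by $c_1\int_0^\infty\mathbb{E}[\tilde{Y}(\xi)\,\mathbf{1}_{\{x\leq 2\log\tilde{Y}(\xi)\}}]\,dx$ (and an analogous $Z$-term), which equals $c_1\mathbb{E}[\tilde{Y}\log_+\tilde{Y}]=c_1\mathbb{E}[Y\log_+Y]$ up to constants — but this only gives divergence of the \emph{expectation}, which is weaker than the required a.s. divergence. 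To upgrade to almost sure divergence I would invoke the equivalence furnished by Proposition~\ref{prop 0-1 law}: passing (by Fubini over the offspring randomness) to a series of the form $\sum_n U(\xi,\beta) F(\zeta^{(\beta)}_{n})$ for a non-increasing $F$, its a.s. divergence is equivalent to $\int_{-\beta}^{\infty}F(x)(x+\beta)\,dx=\infty$, and the failure of $\left(\ref{iff}\right)$ is precisely designed to force that integral condition for the relevant $F$ (built from the tail of $Y\log_+Y$, resp. of $Z$). This last step — setting up the right non-increasing truncation $F$ so that Proposition~\ref{prop 0-1 law} applies cleanly while controlling the remaining offspring expectation, and handling the $Z$-term in parallel — is where I expect the real work to lie; the asymptotic reductions and the expectation computation are routine adaptations of Lemma~\ref{sufficiency} and of Chen~\cite{Che15}.
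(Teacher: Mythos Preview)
Your overall strategy matches the paper's: reduce to Proposition~\ref{BK th 2.1}~$(2)$, lower-bound $\tilde{X}$, and then invoke Proposition~\ref{prop 0-1 law} to convert an integral divergence into an almost-sure divergence. However, two points in your plan need sharpening, and the second is a genuine gap.

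First, you cannot bound $\tilde{X}$ below by a constant multiple of $\tilde{Y}(\xi_{n+1})$. The indicator $\mathbf{1}_{\{\zeta^{(\beta)}_{n}+\Delta V(u)\geq-\beta\}}$ in $\tilde{X}$ kills the contribution of particles with $\Delta V(u)<-(\zeta^{(\beta)}_{n}+\beta)$, and those may carry most of the mass of $\tilde{Y}$. What one does get, via $U(\theta^{n+1}\xi,\cdot)\geq\cdot$ from Proposition~\ref{harmonic function}~$(4)$, is a lower bound in terms of a \emph{truncated} quantity
\[
Y_{+}(\xi_{n+1},x):=\sum_{|u|=1}e^{-\Delta V(u)}\mathbf{1}_{\{\Delta V(u)\geq -x\}},
\]
with $x$ a suitable fraction of $\zeta^{(\beta)}_{n}+\beta$. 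The function of $\zeta^{(\beta)}_{n}$ that results, $F(x,y)=\mathbb{E}\bigl[Y_{+}(\xi,x)\mathbf{1}_{\{\log Y_{+}(\xi,x)\geq y\}}\bigr]$, is not non-increasing in any single variable, so Proposition~\ref{prop 0-1 law} does not apply to it directly. The paper therefore applies Proposition~\ref{prop 0-1 law} to the \emph{untruncated} $F_{1}(y)=\mathbb{E}\bigl[\tilde{Y}\mathbf{1}_{\{\log\tilde{Y}\geq y\}}\bigr]$ and then proves separately that $\sum_{n}\bigl(F_{1}(\zeta^{(\beta)}_{n})-F(\cdot,\zeta^{(\beta)}_{n})\bigr)U(\xi,\beta)<\infty$ a.s.\ by bounding its annealed expectation via Lemma~\ref{lem the integral of series of zeta_{n}}.

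Second, and this is the missing idea: that difference argument does \emph{not} go through under the bare hypothesis $\mathbb{E}[Y\log^{2}_{+}Y]=\infty$. One of the error terms in $F_{1}-F$ is controlled by $\mathbb{E}[Y\log_{+}Y]$, which may itself be infinite. This forces the three-way case split used in the paper (following Chen~\cite{Che15}):
\[
(i)~\mathbb{E}[Y\log^{2}_{+}Y]=\infty,\ \mathbb{E}[Y\log_{+}Y]<\infty;\qquad
(ii)~\mathbb{E}[Y\log_{+}Y]=\infty;\qquad
(iii)~\mathbb{E}[Z\log_{+}Z]=\infty.
\]
In case~$(i)$ the truncation level is $(\zeta^{(\beta)}_{n}+\beta)/2$ and the difference is finite precisely because $\mathbb{E}[Y\log_{+}Y]<\infty$; in case~$(ii)$ one uses a different lower bound on $\tilde{X}$ (truncation at $\zeta^{(\beta)}_{n}+\beta-1$, yielding a factor $1/(\zeta^{(\beta)}_{n}+\beta)$ rather than $1$) so that the relevant non-increasing function is $F_{2}(y)=F_{1}(\log c+y)/(y+\beta+1)$ and the integral test becomes $\int F_{1}\,dy=\infty$, which needs only $\mathbb{E}[Y\log_{+}Y]=\infty$; case~$(iii)$ is handled with $\tilde{Z}$ directly and no truncation is needed. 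Your plan treats the $Y$- and $Z$-branches symmetrically and does not anticipate that the $Y$-branch itself bifurcates; without this split the difference argument fails.
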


\begin{proof}
	According to Proposition \ref{BK th 2.1} $\left(2\right)$, it suffices to prove that, for any $c\geq1$,
	\begin{equation}\label{nece}
		\begin{aligned}
			&\mathbb{E}\left[Y\log^2_{+}Y+Z\log_{+}Z\right]=\infty\\
			\Longrightarrow~&\sum_{n=1}^{\infty}\mathbb{E}_{\zeta^{(\beta)}_{n}}\left[\tilde{X}\mathbf{1}_{\left\{U(\theta^{n}\xi,\zeta^{(\beta)}_{n}+\beta)e^{-\zeta^{(\beta)}_{n}}\tilde{X}\geq c\right\}}\right]U(\xi,\beta)=\infty, ~\mathbb{P}\text{-a.s.}.
		\end{aligned}
	\end{equation}

    Following the idea of Chen \cite{Che15}, we divide the assumption on the left hand side of $\left(\ref{nece}\right)$ into three cases as follows:
    \begin{equation}\label{three cases}
    	\begin{cases}
    		(i)~\mathbb{E}\left[Y\log^2_{+}Y\right]=\infty,~~\mathbb{E}\left[Y\log_{+}Y\right]<\infty,\\
    		(ii)~\mathbb{E}\left[Y\log_{+}Y\right]=\infty,\\
    		(iii)~\mathbb{E}\left[Z\log_{+}Z\right]=\infty.
    	\end{cases}
    \end{equation}

    Note that under $\mathbb{P}_{\xi,\zeta^{(\beta)}_{n}}$, $\left(\Delta V(u):|u|=1\right)$ is distributed as $L_{n+1}$. For any $x\in\mathbb{R}$, we define
    $$Y_{+}\left(\xi_{n+1},x\right):=\sum_{|u|=1}e^{-\Delta V(u)}\mathbf{1}_{\left\{\Delta V(u)\geq -x\right\}},$$
    $$Y_{-}\left(\xi_{n+1},x\right):=\sum_{|u|=1}e^{-\Delta V(u)}\mathbf{1}_{\left\{\Delta V(u)<-x\right\}}.$$
    Observe that $\tilde{Y}\left(\xi_{n+1}\right)=Y_{+}\left(\xi_{n+1},x\right)+Y_{-}\left(\xi_{n+1},x\right)$ and $\left(Y_{+}\left(\xi_{n+1},x\right),Y_{-}\left(\xi_{n+1},x\right),x\in\mathbb{R}\right)$ is independent of $\zeta^{(\beta)}_{n}$ under $\mathbb{P}_{\xi}$.
    
    Firstly, we give the proof of $\left(\ref{nece}\right)$ under the assumption $(i)$ of $\left(\ref{three cases}\right)$. By $\left(\ref{tilde{X}}\right)$ and Proposition \ref{harmonic function} $(4)$, under $\mathbb{P}_{\xi,\zeta^{(\beta)}_{n}}$, we have
    \begin{equation}\nonumber
    	\begin{aligned}
    		\tilde{X}=&\frac{\sum_{|u|=1}U\left(\theta^{n+1}\xi,\zeta^{(\beta)}_{n}+\Delta V(u)+\beta\right)e^{-\Delta V(u)}\mathbf{1}_{\left\{\zeta^{(\beta)}_{n}+\Delta V(u)+\beta\geq 0\right\}}}{U\left(\theta^{n}\xi,\zeta^{(\beta)}_{n}+\beta\right)}\\
    		\geq&\frac{\sum_{|u|=1}\left(\zeta^{(\beta)}_{n}+\Delta V(u)+\beta\right)e^{-\Delta V(u)}\mathbf{1}_{\left\{\Delta V(u)\geq -\left(\zeta^{(\beta)}_{n}+\beta\right)/2\right\}}}{U\left(\theta^{n}\xi,\zeta^{(\beta)}_{n}+\beta\right)}\\
    		\geq&\frac{\sum_{|u|=1}\left(\zeta^{(\beta)}_{n}+\beta\right)e^{-\Delta V(u)}\mathbf{1}_{\left\{\Delta V(u)\geq -\left(\zeta^{(\beta)}_{n}+\beta\right)/2\right\}}}{2U\left(\theta^{n}\xi,\zeta^{(\beta)}_{n}+\beta\right)}\\
    		=&\frac{\zeta^{(\beta)}_{n}+\beta}{2U\left(\theta^{n}\xi,\zeta^{(\beta)}_{n}+\beta\right)}Y_{+}\left(\xi_{n+1},\frac{\zeta^{(\beta)}_{n}+\beta}{2}\right).
    	\end{aligned}
    \end{equation}
    Thus, we only need to show that, for any fixed $c\geq1$, $\mathbb{P}\text{-a.s.}$,
    \begin{equation}\nonumber
    	\sum_{n=1}^{\infty}\mathbb{E}_{\zeta^{(\beta)}_{n}}\left[\frac{\left(\zeta^{(\beta)}_{n}+\beta\right)Y_{+}\left(\xi_{n+1},(\zeta^{(\beta)}_{n}+\beta)/2\right)}{U\left(\theta^{n}\xi,\zeta^{(\beta)}_{n}+\beta\right)}\mathbf{1}_{\left\{e^{-\zeta^{(\beta)}_{n}}(\zeta^{(\beta)}_{n}+\beta)Y_{+}\left(\xi_{n+1},\frac{\zeta^{(\beta)}_{n}+\beta}{2}\right)\geq c\right\}}\right]U(\xi,\beta)=\infty.
    \end{equation}
    Since for almost all $\xi$, $\zeta^{(\beta)}_{n}+\beta\to\infty$ $\mathbb{P}_{\xi}\text{-a.s.}$ as $n\to\infty$ and $\left(\ref{asymptotic behaviour of harmonic function 2}\right)$, it suffices to prove that
    \begin{equation}\nonumber
    	\sum_{n=1}^{\infty}\mathbb{E}\left[Y_{+}\left(\xi_{n+1},\frac{\zeta^{(\beta)}_{n}+\beta}{2}\right)\mathbf{1}_{\left\{\log Y_{+}\left(\xi_{n+1},\frac{\zeta^{(\beta)}_{n}+\beta}{2}\right)\geq\zeta^{(\beta)}_{n}\right\}}\bigg| \zeta^{(\beta)}_{n}\right]U(\xi,\beta)=\infty,~~\mathbb{P}\text{-a.s.},
    \end{equation}
    which is
    \begin{equation}\label{nece 1}
    	\sum_{n=1}^{\infty}F\left(\frac{\zeta^{(\beta)}_{n}+\beta}{2},\zeta^{(\beta)}_{n}\right)U(\xi,\beta)=\infty, ~~\mathbb{P}\text{-a.s.},
    \end{equation}
    where
    $$F\left(x,y\right):=\mathbf{E}\left[F\left(\xi_{n+1},x,y\right)\right]~~\text{and}~~F\left(\xi_{n+1},x,y\right):=\mathbb{E}_{\xi}\left[Y_{+}\left(\xi_{n+1},x\right)\mathbf{1}_{\left\{\log Y_{+}\left(\xi_{n+1},x\right)\geq y\right\}}\right], ~~x,y\in\mathbb{R}.$$
    
    Let $$F_{1}\left(\xi_{n+1},y\right):=\mathbb{E}_{\xi}\left[\tilde{Y}\left(\xi_{n+1}\right)\mathbf{1}_{\left\{\log \tilde{Y}\left(\xi_{n+1}\right)\geq y\right\}}\right], ~~y\in\mathbb{R}$$ and $F_{1}\left(y\right):=\mathbf{E}\left[F_{1}\left(\xi_{n+1},y\right)\right]$. Note that, by the assumption $\left(\ref{boundary condition}\right)$,
    $$0\leq F\left(\xi_{n+1},x,y\right)\leq F_{1}\left(\xi_{n+1},y\right)\leq \mathbb{E}_{\xi}\left(\tilde{Y}\left(\xi_{n+1}\right)\right)=1, ~~\mathbf{P}\text{-a.s.}.$$
    It follows that $F_{1}\left(y\right)$ is a non-negative, non-increasing function. By the assumption $(i)$ of $\left(\ref{three cases}\right)$, we obtain
    \begin{equation}\nonumber
    	\begin{aligned}
    		\int_{-\beta}^{\infty}F_{1}\left(y\right)(y+\beta)\,dy&=\int_{-\beta}^{\infty}\mathbb{E}\left[\tilde{Y}\left(\xi_{n+1}\right)\mathbf{1}_{\left\{\log \tilde{Y}\left(\xi_{n+1}\right)\geq y\right\}}\right](y+\beta)\,dy\\
    		&=\mathbb{E}\left[\tilde{Y}\left(\xi_{n+1}\right)\int_{-\beta}^{\log_{+}\tilde{Y}\left(\xi_{n+1}\right)}(y+\beta)\,dy\right]\\
    		&=\frac{\mathbb{E}\left[\tilde{Y}\left(\xi_{n+1}\right)\left(\log_{+}\tilde{Y}\left(\xi_{n+1}\right)+\beta\right)^2\right]}{2}\\
    		&=\frac{\mathbb{E}\left[Y\left(\log_{+}Y+\beta\right)^2\right]}{2}\\
    		&=\infty.
    	\end{aligned}
    \end{equation}
    By Proposition \ref{prop 0-1 law}, we have
    \begin{equation}\label{nece 1.1}
    	\sum_{n=1}^{\infty}F_{1}\left(\zeta^{(\beta)}_{n}\right)U(\xi,\beta)=\infty, ~~\mathbb{P}\text{-a.s.}.
    \end{equation}

    We are left to prove that
    \begin{equation}\nonumber
    	\sum_{n=1}^{\infty}\left[F_{1}\left(\zeta^{(\beta)}_{n}\right)-F\left(\frac{\zeta^{(\beta)}_{n}+\beta}{2},\zeta^{(\beta)}_{n}\right)\right]U(\xi,\beta)<\infty, ~~\mathbb{P}\text{-a.s.},
    \end{equation}
    which, together with $\left(\ref{nece 1.1}\right)$, implies $\left(\ref{nece 1}\right)$. Equivalently, we only need to prove that
    \begin{equation}\label{nece 1.2}
    	\sum_{n=1}^{\infty}\frac{\zeta^{(\beta)}_{n}+\beta}{U\left(\theta^{n}\xi,\zeta^{(\beta)}_{n}+\beta\right)}\left[F_{1}\left(\zeta^{(\beta)}_{n}\right)-F\left(\frac{\zeta^{(\beta)}_{n}+\beta}{2},\zeta^{(\beta)}_{n}\right)\right]U(\xi,\beta)<\infty, ~~\mathbb{P}\text{-a.s.},
    \end{equation}
    We begin our proof by giving an upper bound for $F_{1}\left(\xi_{n+1},y\right)-F\left(\xi_{n+1},x,y\right)$. For any $x,y\in\mathbb{R}$, we obtain
    \begin{equation}\nonumber
    	\begin{aligned}
    		&F_{1}\left(\xi_{n+1},y\right)-F\left(\xi_{n+1},x,y\right)\\
    		=&\mathbb{E}_{\xi}\left[\tilde{Y}\left(\xi_{n+1}\right)\mathbf{1}_{\left\{\log \tilde{Y}\left(\xi_{n+1}\right)\geq y\right\}}\right]-\mathbb{E}_{\xi}\left[Y_{+}\left(\xi_{n+1},x\right)\mathbf{1}_{\left\{\log Y_{+}\left(\xi_{n+1},x\right)\geq y\right\}}\right]\\
    		=&\mathbb{E}_{\xi}\left[\tilde{Y}\left(\xi_{n+1}\right)\mathbf{1}_{\left\{\log \tilde{Y}\left(\xi_{n+1}\right)\geq y>\log Y_{+}\left(\xi_{n+1},x\right)\right\}}\right]+\mathbb{E}_{\xi}\left[\tilde{Y}\left(\xi_{n+1}\right)\mathbf{1}_{\left\{\log Y_{+}\left(\xi_{n+1},x\right)\geq y\right\}}\right]\\
    		&-\mathbb{E}_{\xi}\left[Y_{+}\left(\xi_{n+1},x\right)\mathbf{1}_{\left\{\log Y_{+}\left(\xi_{n+1},x\right)\geq y\right\}}\right]\\
    		=&\mathbb{E}_{\xi}\left[\tilde{Y}\left(\xi_{n+1}\right)\mathbf{1}_{\left\{\log \tilde{Y}\left(\xi_{n+1}\right)\geq y>\log Y_{+}\left(\xi_{n+1},x\right)\right\}}\right]+\mathbb{E}_{\xi}\left[Y_{-}\left(\xi_{n+1},x\right)\mathbf{1}_{\left\{\log Y_{+}\left(\xi_{n+1},x\right)\geq y\right\}}\right]\\
    		\leq&\mathbb{E}_{\xi}\left[2Y_{-}\left(\xi_{n+1},x\right)\mathbf{1}_{\left\{\log \tilde{Y}\left(\xi_{n+1}\right)\geq y>\log Y_{+}\left(\xi_{n+1},x\right),~Y_{-}\left(\xi_{n+1},x\right)\geq Y_{+}\left(\xi_{n+1},x\right)\right\}}\right]\\
    		&+\mathbb{E}_{\xi}\left[\tilde{Y}\left(\xi_{n+1}\right)\mathbf{1}_{\left\{\log \tilde{Y}\left(\xi_{n+1}\right)\geq y>\log Y_{+}\left(\xi_{n+1},x\right),~Y_{-}\left(\xi_{n+1},x\right)<Y_{+}\left(\xi_{n+1},x\right)\right\}}\right]\\
    		&+\mathbb{E}_{\xi}\left[Y_{-}\left(\xi_{n+1},x\right)\mathbf{1}_{\left\{\log Y_{+}\left(\xi_{n+1},x\right)\geq y\right\}}\right]\\
    		\leq&3\mathbb{E}_{\xi}\left[Y_{-}\left(\xi_{n+1},x\right)\right]+\mathbb{E}_{\xi}\left[\tilde{Y}\left(\xi_{n+1}\right)\mathbf{1}_{\left\{\log \tilde{Y}\left(\xi_{n+1}\right)\geq y>\log Y_{+}\left(\xi_{n+1},x\right),~Y_{-}\left(\xi_{n+1},x\right)<Y_{+}\left(\xi_{n+1},x\right)\right\}}\right]\\
    		\leq&3\mathbb{E}_{\xi}\left[Y_{-}\left(\xi_{n+1},x\right)\right]+\mathbb{E}_{\xi}\left[\tilde{Y}\left(\xi_{n+1}\right)\mathbf{1}_{\left\{\log \tilde{Y}\left(\xi_{n+1}\right)\geq y>\log \left(\tilde{Y}\left(\xi_{n+1}\right)/2\right)\right\}}\right]\\
    		=&:3A_{1}\left(\xi_{n+1},x\right)+A_{2}\left(\xi_{n+1},y\right),
    	\end{aligned}
    \end{equation}
    where the first and last inequality follow from  $\tilde{Y}\left(\xi_{n+1}\right)\leq2\max\left\{Y_{+}\left(\xi_{n+1},x\right),Y_{-}\left(\xi_{n+1},x\right)\right\}$.
    Therefore,
    \begin{equation}\nonumber
    	\begin{aligned}
    		&\sum_{n=1}^{\infty}\frac{\zeta^{(\beta)}_{n}+\beta}{U\left(\theta^{n}\xi,\zeta^{(\beta)}_{n}+\beta\right)}\left[F_{1}\left(\zeta^{(\beta)}_{n}\right)-F\left(\frac{\zeta^{(\beta)}_{n}+\beta}{2},\zeta^{(\beta)}_{n}\right)\right]U(\xi,\beta)\\
    		\leq&\sum_{n=1}^{\infty}\frac{\zeta^{(\beta)}_{n}+\beta}{U\left(\theta^{n}\xi,\zeta^{(\beta)}_{n}+\beta\right)}\mathbf{E}\left[3A_{1}\left(\xi_{n+1},\frac{\zeta^{(\beta)}_{n}+\beta}{2}\right)+A_{2}\left(\xi_{n+1},\zeta^{(\beta)}_{n}\right)\right]U(\xi,\beta).
    	\end{aligned}
    \end{equation}
    Taking the expectation on both sides, we have
    \begin{equation}\nonumber
    	\begin{aligned}
    		&\mathbb{E}\left\{\sum_{n=1}^{\infty}\frac{\zeta^{(\beta)}_{n}+\beta}{U\left(\theta^{n}\xi,\zeta^{(\beta)}_{n}+\beta\right)}\left[F_{1}\left(\zeta^{(\beta)}_{n}\right)-F\left(\frac{\zeta^{(\beta)}_{n}+\beta}{2},\zeta^{(\beta)}_{n}\right)\right]U(\xi,\beta)\right\}\\
    		\leq&\mathbb{E}\left\{\sum_{n=1}^{\infty}\frac{\zeta^{(\beta)}_{n}+\beta}{U\left(\theta^{n}\xi,\zeta^{(\beta)}_{n}+\beta\right)}\mathbf{E}\left[3A_{1}\left(\xi_{n+1},\frac{\zeta^{(\beta)}_{n}+\beta}{2}\right)+A_{2}\left(\xi_{n+1},\zeta^{(\beta)}_{n}\right)\right]U(\xi,\beta)\right\}.
    	\end{aligned}
    \end{equation}
    Then, using Lemma \ref{lem the integral of series of zeta_{n}}, we deduce
    \begin{equation}\nonumber
    	\begin{aligned}
    		&\mathbb{E}\left\{\sum_{n=1}^{\infty}\frac{\zeta^{(\beta)}_{n}+\beta}{U\left(\theta^{n}\xi,\zeta^{(\beta)}_{n}+\beta\right)}\left[F_{1}\left(\zeta^{(\beta)}_{n}\right)-F\left(\frac{\zeta^{(\beta)}_{n}+\beta}{2},\zeta^{(\beta)}_{n}\right)\right]U(\xi,\beta)\right\}\\
    		\leq&\int_{-\beta}^{\infty}3(x+\beta)\mathbf{E}\left[A_{1}\left(\xi_{n+1},\frac{x+\beta}{2}\right)\right]\mathcal{R}^{(\beta)}(dx)+\int_{-\beta}^{\infty}(x+\beta)\mathbf{E}\left[A_{2}\left(\xi_{n+1},x\right)\right]\mathcal{R}^{(\beta)}(dx).
    	\end{aligned}
    \end{equation}

    We now aim to prove that the value of two integrals in the last equality are finite, which completes the proof of $\left(\ref{nece 1.2}\right)$. For the first term, by the many-to-one lemma, we have
    \begin{equation}\label{A_1(x)}
    	\mathbf{E}\left[A_{1}\left(\xi_{n+1},x\right)\right]=\mathbb{E}\left[Y_{-}\left(\xi_{n+1},x\right)\right]=\mathbb{E}\left[\sum_{|u|=1}e^{-\Delta V(u)}\mathbf{1}_{\left\{\Delta V(u)<-x\right\}}\right]=\mathbb{P}\left(S_{1}<-x\right).
    \end{equation}
    Thus, from $\left(\ref{A_1(x)}\right)$, $\left(\ref{relation of renewal measure with L measure}\right)$ and the assumption $\left(\ref{2 more moment}\right)$, we obtain
    \begin{equation}\nonumber
    	\begin{aligned}
    		\int_{-\beta}^{\infty}(x+\beta)\mathbf{E}\left[A_{1}\left(\xi_{n+1},\frac{x+\beta}{2}\right)\right]\mathcal{R}^{(\beta)}(dx)=&\int_{-\beta}^{\infty}(x+\beta)\mathbb{P}\left(S_{1}<-\frac{x+\beta}{2}\right)\mathcal{R}^{(\beta)}(dx)\\
    		\leq&c_{2}\int_{0}^{\infty}x\mathbb{P}\left(S_{1}<-\frac{x}{2}\right)dx\\
    		=&c_{2}\mathbb{E}\left[\int_{0}^{2(-S_{1})_{+}}x\,dx\right]\\
    		=&2c_{2}\mathbb{E}\left[\left((-S_{1})_{+}\right)^{2}\right]\\
    		<&\infty.
    	\end{aligned}
    \end{equation}
    For the second term, by $\left(\ref{relation of renewal measure with L measure}\right)$ and the assumption $(i)$ of $\left(\ref{three cases}\right)$, we have
    \begin{equation}\nonumber
    	\begin{aligned}
    		&\int_{-\beta}^{\infty}(x+\beta)\mathbf{E}\left[A_{2}\left(\xi_{n+1},x\right)\right]\mathcal{R}^{(\beta)}(dx)\\
    		=&\int_{-\beta}^{\infty}(x+\beta)\mathbb{E}\left[\tilde{Y}\left(\xi_{n+1}\right)\mathbf{1}_{\left\{\log \tilde{Y}\left(\xi_{n+1}\right)\geq x>\log \left(\tilde{Y}\left(\xi_{n+1}\right)/2\right)\right\}}\right]\mathcal{R}^{(\beta)}(dx)\\
    		\leq&c_{2}\int_{0}^{\infty}x\mathbb{E}\left[\tilde{Y}\left(\xi_{n+1}\right)\mathbf{1}_{\left\{\log \tilde{Y}\left(\xi_{n+1}\right)+\beta \geq x>\log \left(\tilde{Y}\left(\xi_{n+1}\right)/2\right)+\beta\right\}}\right]dx\\
    		=&c_{2}\mathbb{E}\left[\tilde{Y}\left(\xi_{n+1}\right)\int_{\left(\log\left(\tilde{Y}(\xi_{n+1})/2\right)+\beta\right)_{+}}^{\left(\log\tilde{Y}(\xi_{n+1})+\beta\right)_{+}}x\,dx\right]\\
    		\leq&c_{4}(\beta)\mathbb{E}\left[Y\log_{+}Y\right]\\
    		<&\infty.
    	\end{aligned}
    \end{equation}
    We conclude that $\left(\ref{nece}\right)$ holds for the first case in $\left(\ref{three cases}\right)$.
    
    Secondly, we give the proof of $\left(\ref{nece}\right)$ under the assumption $(ii)$ of $\left(\ref{three cases}\right)$. By $\left(\ref{tilde{X}}\right)$ and Proposition \ref{harmonic function} $(4)$, under $\mathbb{P}_{\xi,\zeta^{(\beta)}_{n}}$, we get
    \begin{equation}\nonumber
    	\begin{aligned}
    		\tilde{X}=&\frac{\sum_{|u|=1}U\left(\theta^{n+1}\xi,\zeta^{(\beta)}_{n}+\Delta V(u)+\beta\right)e^{-\Delta V(u)}\mathbf{1}_{\left\{\zeta^{(\beta)}_{n}+\Delta V(u)+\beta\geq 0\right\}}}{U\left(\theta^{n}\xi,\zeta^{(\beta)}_{n}+\beta\right)}\\
    		\geq&\frac{\sum_{|u|=1}\left(\zeta^{(\beta)}_{n}+\Delta V(u)+\beta\right)e^{-\Delta V(u)}\mathbf{1}_{\left\{\zeta^{(\beta)}_{n}+\Delta V(u)+\beta\geq 1\right\}}}{U\left(\theta^{n}\xi,\zeta^{(\beta)}_{n}+\beta\right)}\\
    		\geq&\frac{\sum_{|u|=1}e^{-\Delta V(u)}\mathbf{1}_{\left\{\Delta V(u)\geq -\left(\zeta^{(\beta)}_{n}+\beta-1\right)\right\}}}{U\left(\theta^{n}\xi,\zeta^{(\beta)}_{n}+\beta\right)}\\
    		=&\frac{Y_{+}\left(\xi_{n+1},\zeta^{(\beta)}_{n}+\beta-1\right)}{U\left(\theta^{n}\xi,\zeta^{(\beta)}_{n}+\beta\right)}.
    	\end{aligned}
    \end{equation}
    Hence, it suffices to prove that, for any fixed $c\geq1$,
    \begin{equation}\nonumber
    	\sum_{n=1}^{\infty}\mathbb{E}_{\zeta^{(\beta)}_{n}}\left[\frac{Y_{+}\left(\xi_{n+1},\zeta^{(\beta)}_{n}+\beta-1\right)}{U\left(\theta^{n}\xi,\zeta^{(\beta)}_{n}+\beta\right)}\mathbf{1}_{\left\{e^{-\zeta^{(\beta)}_{n}}Y_{+}\left(\xi_{n+1},\zeta^{(\beta)}_{n}+\beta-1\right)\geq c\right\}}\right]U(\xi,\beta)=\infty, ~~\mathbb{P}\text{-a.s.}.
    \end{equation}
    Since for almost all $\xi$, $\zeta^{(\beta)}_{n}+\beta\to\infty$ $\mathbb{P}_{\xi}\text{-a.s.}$ as $n\to\infty$ and $\left(\ref{asymptotic behaviour of harmonic function 2}\right)$, this is equivalent to
    \begin{equation}\nonumber
    	\sum_{n=1}^{\infty}\mathbb{E}\left[\frac{Y_{+}\left(\xi_{n+1},\zeta^{(\beta)}_{n}+\beta-1\right)}{\zeta^{(\beta)}_{n}+\beta+1}\mathbf{1}_{\left\{\log Y_{+}\left(\xi_{n+1},\zeta^{(\beta)}_{n}+\beta-1\right)\geq\log c+\zeta^{(\beta)}_{n}\right\}}\bigg| \zeta^{(\beta)}_{n}\right]U(\xi,\beta)=\infty, ~~\mathbb{P}\text{-a.s.},
    \end{equation}
    which is
    \begin{equation}\label{nece 2}
    	\sum_{n=1}^{\infty}\frac{F\left(\zeta^{(\beta)}_{n}+\beta-1,\log c+\zeta^{(\beta)}_{n}\right)}{\zeta^{(\beta)}_{n}+\beta+1}U(\xi,\beta)=\infty, ~~\mathbb{P}\text{-a.s.},
    \end{equation}
    recalling that $F\left(x,\log c+y\right)=\mathbb{E}\left[Y_{+}\left(\xi_{n+1},x\right)\mathbf{1}_{\left\{\log Y_{+}\left(\xi_{n+1},x\right)\geq \log c+y\right\}}\right]$, $x,y\in\mathbb{R}$.
    
    Let $$F_{2}\left(\xi_{n+1},y\right):=\frac{\mathbb{E}_{\xi}\left[\tilde{Y}\left(\xi_{n+1}\right)\mathbf{1}_{\left\{\log \tilde{Y}\left(\xi_{n+1}\right)\geq \log c+y\right\}}\right]}{y+\beta+1}=\frac{F_{1}\left(\xi_{n+1},\log c+y\right)}{y+\beta+1}, ~~y\geq -\beta,$$ and $F_{2}\left(y\right):=\mathbf{E}\left[F_{2}\left(\xi_{n+1},y\right)\right]$. Clearly, $F_{2}\left(y\right)$ is non-increasing and
    $$0\leq F_{2}\left(y\right)=\mathbf{E}\left[\frac{F_{1}\left(\xi_{n+1},\log c+y\right)}{y+\beta+1}\right]=\frac{F_{1}\left(\log c+y\right)}{y+\beta+1}\leq1.$$
    By the assumption $(ii)$ of $\left(\ref{three cases}\right)$, we have
    \begin{equation}\nonumber
    	\begin{aligned}
    		\int_{-\beta}^{\infty}F_{2}\left(y\right)(y+\beta)\,dy&=\int_{-\beta}^{\infty}\frac{F_{1}\left(\log c+y\right)}{y+\beta+1}(y+\beta)\,dy\\
    		&\geq \int_{1}^{\infty}\frac{F_{1}\left(\log c+y\right)}{y+\beta+1}(y+\beta)\,dy\\
    		&\geq \frac{1}{2}\int_{1}^{\infty}F_{1}\left(\log c+y\right)\,dy\\
    		&=\frac{1}{2}\int_{1}^{\infty}\mathbb{E}\left[\tilde{Y}\left(\xi_{n+1}\right)\mathbf{1}_{\left\{\log \tilde{Y}\left(\xi_{n+1}\right)\geq \log c+y\right\}}\right]\,dy\\
    		&=\frac{1}{2}\mathbb{E}\left[\tilde{Y}\left(\xi_{n+1}\right)\left(\log(\tilde{Y}\left(\xi_{n+1}\right)/c)-1\right)_{+}\right]\\
    		&=\infty,
    	\end{aligned}
    \end{equation}
    which implies $\int_{-\beta}^{\infty}F_{2}\left(y\right)(y+\beta)\,dy=\infty$. By Proposition \ref{prop 0-1 law}, we get
    \begin{equation}\label{nece 2.1}
    	\sum_{n=1}^{\infty}\frac{F_{1}\left(\log c+\zeta^{(\beta)}_{n}\right)}{\zeta^{(\beta)}_{n}+\beta+1}U(\xi,\beta)=\infty, ~~\mathbb{P}\text{-a.s.}.
    \end{equation}
    
    We are left to prove that
    \begin{equation}\nonumber
    	\sum_{n=1}^{\infty}\frac{F_{1}\left(\log c+\zeta^{(\beta)}_{n}\right)-F\left(\zeta^{(\beta)}_{n}+\beta-1,\log c+\zeta^{(\beta)}_{n}\right)}{\zeta^{(\beta)}_{n}+\beta+1}U(\xi,\beta)<\infty, ~~\mathbb{P}\text{-a.s.},
    \end{equation}
    which, combined with $\left(\ref{nece 2.1}\right)$, implies $\left(\ref{nece 2}\right)$. Equivalently, we only need to prove that
    \begin{equation}\label{nece 2.2}
    	\sum_{n=1}^{\infty}\frac{F_{1}\left(\log c+\zeta^{(\beta)}_{n}\right)-F\left(\zeta^{(\beta)}_{n}+\beta-1,\log c+\zeta^{(\beta)}_{n}\right)}{U\left(\theta^{n}\xi,\zeta^{(\beta)}_{n}+\beta\right)}U(\xi,\beta)<\infty, ~~\mathbb{P}\text{-a.s.},
    \end{equation}
    By the same argument as the proof of first part, we have
    \begin{equation}\nonumber
    	\begin{aligned}
    		&\sum_{n=1}^{\infty}\frac{F_{1}\left(\log c+\zeta^{(\beta)}_{n}\right)-F\left(\zeta^{(\beta)}_{n}+\beta-1,\log c+\zeta^{(\beta)}_{n}\right)}{U\left(\theta^{n}\xi,\zeta^{(\beta)}_{n}+\beta\right)}U(\xi,\beta)\\
    		\leq&\sum_{n=1}^{\infty}\frac{\mathbf{E}\left[3A_{1}\left(\xi_{n+1},\zeta^{(\beta)}_{n}+\beta-1\right)+A_{2}\left(\xi_{n+1},\log c+\zeta^{(\beta)}_{n}\right)\right]}{U\left(\theta^{n}\xi,\zeta^{(\beta)}_{n}+\beta\right)}U(\xi,\beta).
    	\end{aligned}
    \end{equation}
    Taking the expectation on both sides, we get
    \begin{equation}\nonumber
    	\begin{aligned}
    		&\mathbb{E}\left\{\sum_{n=1}^{\infty}\frac{F_{1}\left(\log c+\zeta^{(\beta)}_{n}\right)-F\left(\zeta^{(\beta)}_{n}+\beta-1,\log c+\zeta^{(\beta)}_{n}\right)}{U\left(\theta^{n}\xi,\zeta^{(\beta)}_{n}+\beta\right)}U(\xi,\beta)\right\}\\
    		\leq&\mathbb{E}\left\{\sum_{n=1}^{\infty}\frac{\mathbf{E}\left[3A_{1}\left(\xi_{n+1},\zeta^{(\beta)}_{n}+\beta-1\right)+A_{2}\left(\xi_{n+1},\log c+\zeta^{(\beta)}_{n}\right)\right]}{U\left(\theta^{n}\xi,\zeta^{(\beta)}_{n}+\beta\right)}U(\xi,\beta)\right\}
    	\end{aligned}
    \end{equation}
    Then, by Lemma \ref{lem the integral of series of zeta_{n}}, we have
    \begin{equation}\nonumber
    	\begin{aligned}
    		&\mathbb{E}\left\{\sum_{n=1}^{\infty}\frac{F_{1}\left(\xi_{n+1},\log c+\zeta^{(\beta)}_{n}\right)-F\left(\xi_{n+1},\zeta^{(\beta)}_{n}+\beta-1,\log c+\zeta^{(\beta)}_{n}\right)}{U\left(\theta^{n}\xi,\zeta^{(\beta)}_{n}+\beta\right)}U(\xi,\beta)\right\}\\
    		\leq&\int_{-\beta}^{\infty}\left[3\mathbf{E}\left(A_{1}\left(\xi_{n+1},x+\beta-1\right)\right)+\mathbf{E}\left(A_{2}\left(\xi_{n+1},\log c+x\right)\right)\right]\mathcal{R}^{(\beta)}(dx).
    	\end{aligned}
    \end{equation}
    
    We turn to prove the finiteness of the above two integrals, which completes the proof of $\left(\ref{nece 2.2}\right)$. For the first integral, by $\left(\ref{A_1(x)}\right)$ and $\left(\ref{relation of renewal measure with L measure}\right)$, we obtain
    \begin{equation}\nonumber
    	\begin{aligned}
    		\int_{-\beta}^{\infty}\mathbf{E}\left[A_{1}\left(\xi_{n+1},x+\beta-1\right)\right]\mathcal{R}^{(\beta)}(dx)=&\int_{-\beta}^{\infty}\mathbb{P}\left(S_{1}<-(x+\beta-1)\right)\mathcal{R}^{(\beta)}(dx)\\
    		\leq&c_{2}\int_{0}^{\infty}\mathbb{P}\left(S_{1}<-(x-1)\right)\,dx\\
    		=&c_{2}\mathbb{E}\left[\int_{0}^{(-S_{1}+1)_{+}}\,dx\right]\\
    		=&c_{2}\mathbb{E}\left[(-S_{1}+1)_{+}\right]\\
    		<&\infty.
    	\end{aligned}
    \end{equation}
    For the second integral, by $\left(\ref{relation of renewal measure with L measure}\right)$, we get
    \begin{equation}\nonumber
    	\begin{aligned}
    		&\int_{-\beta}^{\infty}\mathbf{E}\left[A_{2}\left(\xi_{n+1},\log c+x\right)\right]\mathcal{R}^{(\beta)}(dx)\\
    		=&\int_{-\beta}^{\infty}\mathbb{E}\left[\tilde{Y}\left(\xi_{n+1}\right)\mathbf{1}_{\left\{\log \tilde{Y}\left(\xi_{n+1}\right)\geq \log c+x>\log \left(\tilde{Y}\left(\xi_{n+1}\right)/2\right)\right\}}\right]\mathcal{R}^{(\beta)}(dx)\\
    		\leq&c_{2}\int_{0}^{\infty}\mathbb{E}\left[\tilde{Y}\left(\xi_{n+1}\right)\mathbf{1}_{\left\{\log \tilde{Y}\left(\xi_{n+1}\right)+\beta\geq \log c+x>\log \left(\tilde{Y}\left(\xi_{n+1}\right)/2\right)+\beta\right\}}\right]dx\\
    		=&c_{2}\mathbb{E}\left[\tilde{Y}\left(\xi_{n+1}\right)\int_{\left(\log\left(\tilde{Y}(\xi_{n+1})/2c\right)+\beta\right)_{+}}^{\left(\log\left(\tilde{Y}(\xi_{n+1})/c\right)+\beta\right)_{+}}\,dx\right]\\
    		\leq&c_{5}(\beta)\mathbb{E}\left[\tilde{Y}\left(\xi_{n+1}\right)\right]\\
    		<&\infty.
    	\end{aligned}
    \end{equation}
    We obtain that $\left(\ref{nece}\right)$ holds for the second case in $\left(\ref{three cases}\right)$.
    
    Finally, we give the proof of $\left(\ref{nece}\right)$ under the assumption $(iii)$ of $\left(\ref{three cases}\right)$. By $\left(\ref{tilde{X}}\right)$ and Proposition \ref{harmonic function} $(4)$, under $\mathbb{P}_{\xi,\zeta^{(\beta)}_{n}}$, we get
    \begin{equation}\nonumber
    	\begin{aligned}
    		\tilde{X}=&\frac{\sum_{|u|=1}U\left(\theta^{n+1}\xi,\zeta^{(\beta)}_{n}+\Delta V(u)+\beta\right)e^{-\Delta V(u)}\mathbf{1}_{\left\{\zeta^{(\beta)}_{n}+\Delta V(u)+\beta\geq 0\right\}}}{U\left(\theta^{n}\xi,\zeta^{(\beta)}_{n}+\beta\right)}\\
    		\geq&\frac{\sum_{|u|=1}\left(\zeta^{(\beta)}_{n}+\Delta V(u)+\beta\right)e^{-\Delta V(u)}\mathbf{1}_{\left\{\Delta V(u)\geq 0\right\}}}{U\left(\theta^{n}\xi,\zeta^{(\beta)}_{n}+\beta\right)}\\
    		\geq&\frac{\sum_{|u|=1}\Delta V(u)e^{-\Delta V(u)}\mathbf{1}_{\left\{\Delta V(u)\geq 0\right\}}}{U\left(\theta^{n}\xi,\zeta^{(\beta)}_{n}+\beta\right)}\\
    		=&\frac{\tilde{Z}\left(\xi_{n+1}\right)}{U\left(\theta^{n}\xi,\zeta^{(\beta)}_{n}+\beta\right)}.
    	\end{aligned}
    \end{equation}
    Hence, we just need to prove that, for any fixed $c\geq1$,
    \begin{equation}\nonumber
    	\sum_{n=1}^{\infty}\mathbb{E}\left[\frac{\tilde{Z}\left(\xi_{n+1}\right)}{U\left(\theta^{n}\xi,\zeta^{(\beta)}_{n}+\beta\right)}\mathbf{1}_{\left\{e^{-\zeta^{(\beta)}_{n}}\tilde{Z}\left(\xi_{n+1}\right)\geq c\right\}}\bigg| \zeta^{(\beta)}_{n}\right]U(\xi,\beta)=\infty, ~~\mathbb{P}\text{-a.s.},
    \end{equation}
    Since for almost all $\xi$, $\zeta^{(\beta)}_{n}+\beta\to\infty$ $\mathbb{P}_{\xi}\text{-a.s.}$ as $n\to\infty$ and $\left(\ref{asymptotic behaviour of harmonic function 2}\right)$, this is equivalent to
    \begin{equation}\nonumber
    	\sum_{n=1}^{\infty}\mathbb{E}\left[\frac{\tilde{Z}\left(\xi_{n+1}\right)}{\zeta^{(\beta)}_{n}+\beta+1}\mathbf{1}_{\left\{\log \tilde{Z}\left(\xi_{n+1}\right)\geq \log c+\zeta^{(\beta)}_{n}\right\}}\bigg| \zeta^{(\beta)}_{n}\right]U(\xi,\beta)=\infty, ~~\mathbb{P}\text{-a.s.},
    \end{equation}
    which is
    \begin{equation}\label{nece 3}
    	\sum_{n=1}^{\infty}F_{3}\left(\zeta^{(\beta)}_{n}\right)U(\xi,\beta)=\infty, ~~\mathbb{P}\text{-a.s.},
    \end{equation}
    where $$F_{3}\left(x\right):=\frac{\mathbb{E}\left[\tilde{Z}\left(\xi_{n+1}\right)\mathbf{1}_{\left\{\log \tilde{Z}\left(\xi_{n+1}\right)\geq \log c+x\right\}}\right]}{x+\beta+1}, ~~x\geq -\beta.$$
    It is obviously that $F_{3}(x)$ is non-increasing and
    $$0\leq F_{3}\left(x\right)=\frac{\mathbb{E}\left[\tilde{Z}\left(\xi_{n+1}\right)\mathbf{1}_{\left\{\log \tilde{Z}\left(\xi_{n+1}\right)\geq \log c+x\right\}}\right]}{x+\beta+1}\leq\mathbb{E}(Z)=\mathbb{E}\left[(S_{1})_{+}\right]<\infty.$$
    Observe that, by the assumption $(iii)$ of $\left(\ref{three cases}\right)$,
    \begin{equation}\nonumber
    	\begin{aligned}
    		\int_{-\beta}^{\infty}F_{3}\left(x\right)(x+\beta)\,dx&\geq\int_{1}^{\infty}\frac{\mathbb{E}\left[\tilde{Z}\left(\xi_{n+1}\right)\mathbf{1}_{\left\{\log \tilde{Z}\left(\xi_{n+1}\right)\geq\log c+x\right\}}\right]}{x+\beta+1}(x+\beta)\,dx\\
    		&\geq\frac{1}{2}\int_{1}^{\infty}\mathbb{E}\left[\tilde{Z}\left(\xi_{n+1}\right)\mathbf{1}_{\left\{\log \tilde{Z}\left(\xi_{n+1}\right)\geq \log c+x\right\}}\right]\,dx\\
    		&=\frac{1}{2}\mathbb{E}\left[\tilde{Z}\left(\xi_{n+1}\right)\left(\log(\tilde{Z}\left(\xi_{n+1}\right)/c)-1\right)_{+}\right]\\
    		&=\infty,
    	\end{aligned}
    \end{equation}
    By Proposition \ref{prop 0-1 law}, it follows that $\left(\ref{nece 3}\right)$ is valid. Therefore, we conclude that $\left(\ref{nece}\right)$ holds for the third case in $\left(\ref{three cases}\right)$.
\end{proof}

\begin{proof}[Proof of the necessary condition of Theorem \ref{main result} $\left(2\right)$]
	By Lemma \ref{necessity}, for almost all $\xi$, $D^{(\beta)}_{\infty}=0, ~\mathbb{P}_{\xi}\text{-a.s.}$ for all $\beta\geq0$ when (\ref{iff}) does not hold. Therefore, using Lemma \ref{connection} $\left(3\right)$, we obtain that $D_{\infty}$ is degenerate for almost all $\xi$, which completes the proof of necessity.
\end{proof}


\section*{Appendix}\label{section a}

\noindent{\it Proof of Lemma \ref{truncated}.}
	For any $v\in\mathbb{T} \backslash \left\{\varnothing\right\}$, we denote by $\overleftarrow{v}$ its parent.
	By the branching property, many-to-one lemma and quenched harmonic property, we have
	\begin{equation}\nonumber
		\begin{aligned}
			&\mathbb{E}_{\xi,a}\left[D^{(\beta)}_{n+1}\mid\mathscr{F}_{n}\right]\\
			=&\mathbb{E}_{\xi,a}\left[\sum_{|u|=n}~\sum_{|v|=n+1:\overleftarrow{v}=u}U\left(\theta^{n+1}\xi,V(v)+\beta\right)e^{-V(v)}\mathbf{1}_{\left\{\min_{0\leq k\leq n}V(u_{k})\geq-\beta\right\}}\mathbf{1}_{\left\{V(v)\geq-\beta\right\}}\bigg |\mathscr{F}_{n}\right]\\
			=&\sum_{|u|=n}\mathbf{1}_{\left\{\min_{0\leq k\leq n}V(u_{k})\geq-\beta\right\}} \mathbb{E}_{\xi,V(u)}\left[\sum_{|v|=1}U\left(\theta^{n+1}\xi,V(v)+\beta\right)e^{-V(v)}\mathbf{1}_{\left\{V(v)\geq-\beta\right\}}\right]\\
			=&\sum_{|u|=n}\mathbf{1}_{\left\{\min_{0\leq k\leq n}V(u_{k})\geq-\beta\right\}}e^{-V(u)} \mathbb{E}_{\xi,V(u)}\left[U\left(\theta^{n+1}\xi,S_{1}+\beta\right)\mathbf{1}_{\left\{S_{1}\geq-\beta\right\}}\right]\\
			=&\sum_{|u|=n}\mathbf{1}_{\left\{\min_{0\leq k\leq n}V(u_{k})\geq-\beta\right\}}e^{-V(u)}U\left(\theta^{n}\xi,V(u)+\beta\right)\\
			=&D^{(\beta)}_{n},
		\end{aligned}
	\end{equation}
    it follows that $\left(D^{(\beta)}_{n},n\geq0\right)$ is a non-negative martingale under $\mathbb{P}_{\xi,a}$ and $\mathbb{P}_{a}$.
    By the martingale convergence theorem, we have the a.s. convergence of $D^{(\beta)}_{n}$.
\qed

\

\noindent{\it Proof of Lemma \ref{connection}.}
	$\left(1\right)$ By Theorem 7.1 of Biggins and Kyprianou \cite{BK04}, we have $W_{n}\to0,~\mathbb{P}\text{-a.s.}$. Since $e^{-\inf_{|u|=n}V(u)}\leq W_{n}\to0$, it follows that
	$$\inf_{|u|=n}V(u)\to\infty,~~\inf_{u\in\mathbb{T}}V(u)>-\infty,~~\mathbb{P}\text{-a.s.}.$$
	Hence, for any $\epsilon>0$, there exists $\beta:=\beta(\epsilon)$ such that
	$$\mathbb{P}\left(\inf_{u\in\mathbb{T}}V(u)\geq-\beta\right)\geq1-\epsilon.$$
	On the one hand, by Lemma \ref{truncated},
	$$D^{(\beta)}_{n}=\sum_{|u|=n}U\left(\theta^{n}\xi,V(u)+\beta\right)e^{-V(u)}\mathbf{1}_{\left\{\min_{0\leq k\leq n}V(u_{k})\geq-\beta\right\}}\to D^{(\beta)}_{\infty},~~\mathbb{P}\text{-a.s.};$$
	on the other hand, on the event $\left\{\inf_{u\in\mathbb{T}}V(u)\geq-\beta\right\}$, by (\ref{asymptotic behaviour of harmonic function 2}),
	$$D^{(\beta)}_{n}=\sum_{|u|=n}U\left(\theta^{n}\xi,V(u)+\beta\right)e^{-V(u)}\sim\sum_{|u|=n}\left(V(u)+\beta\right)e^{-V(u)}=D_{n}+\beta W_{n},~~\mathbb{P}\text{-a.s.}.$$
	Since $W_{n}\to0,~\mathbb{P}\text{-a.s.}$, it follows that with probability at least $1-\epsilon$, $D_{n}$ converges to a non-negative finite limit. This yields the $\mathbb{P}\text{-a.s.}$ convergence of $D_{n}$ by letting $\beta\to\infty$.
	
	$\left(2\right)$ If there exists $\beta\geq0$ such that $D^{(\beta)}_{n}$ converges in $L^{1}\left(\mathbb{P}_{\xi}\right)$ for almost all $\xi$, then we have $\mathbb{E}_{\xi}\left(D^{(\beta)}_{\infty}\right)=U(\xi,\beta)>0,~\mathbf{P}\text{-a.s.}$, in particular, $\mathbb{P}_{\xi}\left(D^{(\beta)}_{\infty}>0\right)>0, ~\mathbf{P}\text{-a.s.}$
	Since $D^{(\beta)}_{n}$ is non-decreasing in $\beta$, we deduce by (1) that $\mathbb{P}_{\xi}\left(D_{\infty}>0\right)>0, ~\mathbf{P}\text{-a.s.}$.
	
	$\left(3\right)$ If for almost all $\xi$, $D^{(\beta)}_{\infty}=0, ~\mathbb{P}_{\xi}\text{-a.s.}$ for all $\beta\geq0$, then by (1) again, we have $\mathbb{P}_{\xi}\left(D_{\infty}=0\right)=1,~\mathbf{P}\text{-a.s.}$.
\qed

\

\noindent{\it Proof of Proposition \ref{spinal decomposition}.}
	To describe the probabilities $\mathbb{P}_{\xi,a}$, $\mathbb{Q}^{(\beta)}_{\xi,a}$ and $\hat{\mathbb{P}}^{(\beta)}_{\xi,a}$, we use the Ulam-Harris-Neveu notations to encode the genealogical tree $\mathbb{T}$ with $\mathcal{U}:=\cup_{k=1}^{\infty}\left(\mathbb{N}^*\right)^k\cup\left\{\varnothing\right\}$, where $\mathbb{N}^*:=\left\{1,2,\cdots\right\}$. Vertices of a tree are labelled by their line of descent. For example, the vertex $u=k_{1}\cdots k_{n}$ means the $k_{n}$-th child of $\cdots$ of the $k_{1}$-th child of the initial vertex $\varnothing$. For two strings $u$ and $v$, we write $uv$ for the concatenated string. We refer to Section 1.1 of Mallein \cite{Mal15} for the rigorous presentation of the time-inhomogeneous branching random walk. Let $\left(g_{u},u\in\mathcal{U}\right)$ be a family of non-negative measurable functions, by the standard argument for measure extension theorem, it suffices to prove that for any $n\geq0$,
	$$\mathbb{E}_{\hat{\mathbb{P}}^{(\beta)}_{\xi,a}}\left[\prod_{|u|\leq n}g_{u}\left(\xi,V(u)\right)\right]=\mathbb{E}_{\mathbb{Q}^{(\beta)}_{\xi,a}}\left[\prod_{|u|\leq n}g_{u}\left(\xi,V(u)\right)\right],$$
	where $\mathbb{E}_{\hat{\mathbb{P}}^{(\beta)}_{\xi,a}}$ and $\mathbb{E}_{\mathbb{Q}^{(\beta)}_{\xi,a}}$ denote the corresponding expectation of $\hat{\mathbb{P}}^{(\beta)}_{\xi,a}$ and $\mathbb{Q}^{(\beta)}_{\xi,a}$ respectively.
	That is, by $\left(\ref{definition of quenched Q}\right)$ (the definition of $\mathbb{Q}^{(\beta)}_{\xi,a}$),
	\begin{equation}\label{hat P}
		\mathbb{E}_{\hat{\mathbb{P}}^{(\beta)}_{\xi,a}}\left[\prod_{|u|\leq n}g_{u}\left(\xi,V(u)\right)\right]=\mathbb{E}_{\xi,a}\left[\frac{D^{(\beta)}_{n}}{U\left(\xi,a+\beta\right)e^{-a}}\prod_{|u|\leq n}g_{u}\left(\xi,V(u)\right)\right].
	\end{equation}

	Let us define
	$$D^{(\beta)}_{n}(v):=U\left(\theta^{n}\xi,V(v)+\beta\right)e^{-V(v)}\mathbf{1}_{\left\{\min_{0\leq k\leq n}V(v_{k})\geq-\beta\right\}}, ~~n\geq1,$$
	and $D^{(\beta)}_{0}(\varnothing):=U\left(\xi,a+\beta\right)e^{-a}$ if $V\left(\varnothing\right)=a$. Clearly, $D^{(\beta)}_{n}=\sum_{|v|=n}D^{(\beta)}_{n}(v)$, $n\geq0$.
	We claim that for any $v\in\mathcal{U}$ with $|v|=n$,
	\begin{equation}\label{hat P spinal}
		\mathbb{E}_{\hat{\mathbb{P}}^{(\beta)}_{\xi,a}}\left[\mathbf{1}_{\left\{w^{(\beta)}_{n}=v\right\}}\prod_{|u|\leq n}g_{u}\left(\xi,V(u)\right)\right]=\mathbb{E}_{\xi,a}\left[\frac{D^{(\beta)}_{n}(v)}{U\left(\xi,a+\beta\right)e^{-a}}\prod_{|u|\leq n}g_{u}\left(\xi,V(u)\right)\right],
	\end{equation}
	which implies $\left(\ref{hat P}\right)$ by summing over $|v|=n$.
	
	We turn to the proof of $\left(\ref{hat P spinal}\right)$. We introduce some notations for our statement. For any $u\in\mathcal{U}$, we denote by $\underrightarrow{u}$ its children and $\mathbb{T}_{u}$ the subtree rooted at it. We write $\left\{\varnothing\rightsquigarrow u\right\}:=\left\{\varnothing,u_{1},\cdots,u_{|u|}\right\}$ for the set of vertices in the unique shortest path connecting $\varnothing$ to $u$. Decomposing the product $\prod_{|u|\leq n}g_{u}\left(\xi,V(u)\right)$ along the path $\left\{\varnothing\rightsquigarrow v\right\}$, $\left(\ref{hat P spinal}\right)$ can be written as
	\begin{equation}\label{hat P spinal 2}
		\begin{aligned}
			&\mathbb{E}_{\hat{\mathbb{P}}^{(\beta)}_{\xi,a}}\left[\mathbf{1}_{\left\{w^{(\beta)}_{n}=v\right\}}\prod_{i=0}^{n}g_{v_{i}}\left(\xi,V(v_{i})\right)\prod_{u\in \underrightarrow{v_{i-1}} \backslash v_{i}}h_{u}\left(\xi,V(u)\right)\right]\\
			=&\mathbb{E}_{\xi,a}\left[\frac{D^{(\beta)}_{n}(v)}{U\left(\xi,a+\beta\right)e^{-a}}\prod_{i=0}^{n}g_{v_{i}}\left(\xi,V(v_{i})\right)\prod_{u\in \underrightarrow{v_{i-1}} \backslash v_{i}}h_{u}\left(\xi,V(u)\right)\right],
		\end{aligned}		
	\end{equation}
	where $h_{u}(\xi,\cdot):=\mathbb{E}_{\theta^{|u|}\xi}\left[\prod_{z\in\mathbb{T}_{u}}g_{uz}\left(\xi,\cdot+V(z)\right)\mathbf{1}_{\left\{|z|\leq n-|u|\right\}}\right]$ and $\underrightarrow{v_{i-1}} \backslash v_{i}$ means the set of the siblings of $v_{i}$.
	
	Now we prove $\left(\ref{hat P spinal 2}\right)$ by induction. This equation obviously holds for $n=0$. Assume that it holds for $n-1$, we need to show that it is true for $n$. Let us introduce the filtration $\mathscr{G}^{(\beta)}_{n}:=\sigma\left(w^{(\beta)}_{i},V\left(w^{(\beta)}_{i}\right),0\leq i\leq n\right)\vee \sigma\left(\underrightarrow{w^{(\beta)}_{i}},V\left(\underrightarrow{w^{(\beta)}_{i}}\right),0\leq i<n\right)$, the information of the spine and its siblings. By the construction of $\hat{\mathbb{P}}^{(\beta)}_{\xi,a}$, given that $\left\{w^{(\beta)}_{n-1}=v_{n-1}\right\}$, the probability of the event $\left\{w^{(\beta)}_{n}=v\right\}$ is $\frac{D^{(\beta)}_{n}(v)}{D^{(\beta)}_{n}(v)+\sum_{u\in \underrightarrow{v_{n-1}} \backslash v}D^{(\beta)}_{n}(u)}$, and the point process generated by $w^{(\beta)}_{n-1}=v_{n-1}$ under $\hat{\mathbb{P}}^{(\beta)}_{\xi,a}$ has Radon-Nikodym derivative $\frac{D^{(\beta)}_{n}(v)+\sum_{u\in \underrightarrow{v_{n-1}} \backslash v}D^{(\beta)}_{n}(u)}{D^{(\beta)}_{n-1}(v_{n-1})}$ with respect to the point process generated by $v_{n-1}$ under $\mathbb{P}_{\xi,a}$. As a result, for the $n$-th term $\mathbf{1}_{\left\{w^{(\beta)}_{n}=v\right\}}g_{v}\left(\xi,V(v)\right)\prod_{u\in \underrightarrow{v_{n-1}} \backslash v}h_{u}\left(\xi,V(u)\right)$ in the product inside the left hand side of $\left(\ref{hat P spinal 2}\right)$, conditioned on $\mathscr{G}^{(\beta)}_{n-1}$, we have
	\begin{equation}\nonumber
		\begin{aligned}
			&\mathbb{E}_{\hat{\mathbb{P}}^{(\beta)}_{\xi,a}}\left[\mathbf{1}_{\left\{w^{(\beta)}_{n}=v\right\}}g_{v}\left(\xi,V(v)\right)\prod_{u\in \underrightarrow{v_{n-1}} \backslash v}h_{u}\left(\xi,V(u)\right)\bigg| \mathscr{G}^{(\beta)}_{n-1}\right]\\
			=&\mathbf{1}_{\left\{w^{(\beta)}_{n-1}=v_{n-1}\right\}}\mathbb{E}_{\hat{\mathbb{P}}^{(\beta)}_{\xi,a}}\left[\frac{D^{(\beta)}_{n}(v)g_{v}\left(\xi,V(v)\right)}{D^{(\beta)}_{n}(v)+\sum_{u\in \underrightarrow{v_{n-1}} \backslash v}D^{(\beta)}_{n}(u)}\prod_{u\in \underrightarrow{v_{n-1}} \backslash v}h_{u}\left(\xi,V(u)\right)\bigg| \mathscr{G}^{(\beta)}_{n-1}\right]\\
			=&\mathbf{1}_{\left\{w^{(\beta)}_{n-1}=v_{n-1}\right\}}\mathbb{E}_{\hat{\mathbb{P}}^{(\beta)}_{\xi,a}}\left[\frac{D^{(\beta)}_{n}(v)g_{v}\left(\xi,V(v)\right)}{D^{(\beta)}_{n}(v)+\sum_{u\in \underrightarrow{v_{n-1}} \backslash v}D^{(\beta)}_{n}(u)}\prod_{u\in \underrightarrow{v_{n-1}} \backslash v}h_{u}\left(\xi,V(u)\right)\bigg| \left(w^{(\beta)}_{n-1},V\left(w^{(\beta)}_{n-1}\right)\right)\right]\\
			=&\mathbf{1}_{\left\{w^{(\beta)}_{n-1}=v_{n-1}\right\}}\mathbb{E}_{\xi,a}\left[\frac{D^{(\beta)}_{n}(v)g_{v}\left(\xi,V(v)\right)}{D^{(\beta)}_{n-1}(v_{n-1})}\prod_{u\in \underrightarrow{v_{n-1}} \backslash v}h_{u}\left(\xi,V(u)\right)\bigg| V\left(v_{n-1}\right)\right]\\
			=&:\mathbf{1}_{\left\{w^{(\beta)}_{n-1}=v_{n-1}\right\}}f\left(\xi,V\left(v_{n-1}\right)\right).
		\end{aligned}
	\end{equation}
	Then, it follows from the above expression and the inductive hypothesis that
	\begin{equation}\nonumber
		\begin{aligned}
			&\mathbb{E}_{\hat{\mathbb{P}}^{(\beta)}_{\xi,a}}\left[\mathbf{1}_{\left\{w^{(\beta)}_{n}=v\right\}}\prod_{i=0}^{n}g_{v_{i}}\left(\xi,V(v_{i})\right)\prod_{u\in \underrightarrow{v_{i-1}} \backslash v_{i}}h_{u}\left(\xi,V(u)\right)\right]\\
			=&\mathbb{E}_{\hat{\mathbb{P}}^{(\beta)}_{\xi,a}}\left[\mathbf{1}_{\left\{w^{(\beta)}_{n-1}=v_{n-1}\right\}}f\left(\xi,V\left(v_{n-1}\right)\right)\prod_{i=0}^{n-1}g_{v_{i}}\left(\xi.V(v_{i})\right)\prod_{u\in \underrightarrow{v_{i-1}} \backslash v_{i}}h_{u}\left(\xi,V(u)\right)\right]\\
			=&\mathbb{E}_{\xi,a}\left[\frac{D^{(\beta)}_{n-1}(v_{n-1})}{U\left(\xi,a+\beta\right)e^{-a}}f\left(\xi,V\left(v_{n-1}\right)\right)\prod_{i=0}^{n-1}g_{v_{i}}\left(\xi,V(v_{i})\right)\prod_{u\in \underrightarrow{v_{i-1}} \backslash v_{i}}h_{u}\left(\xi,V(u)\right)\right]\\
			=&\mathbb{E}_{\xi,a}\left[\frac{D^{(\beta)}_{n}(v)}{U\left(\xi,a+\beta\right)e^{-a}}\prod_{i=0}^{n}g_{v_{i}}\left(\xi,V(v_{i})\right)\prod_{u\in \underrightarrow{v_{i-1}} \backslash v_{i}}h_{u}\left(\xi,V(u)\right)\right].
		\end{aligned}
	\end{equation}
	This proves $\left(\ref{hat P spinal 2}\right)$ and hence completes the proof of Proposition \ref{spinal decomposition}.
\qed

\

\noindent{\it Proof of Proposition \ref{law of the spine}.}   
	$\left(1\right)$ Recalling $\left(\ref{hat P spinal}\right)$ in the proof of Proposition \ref{spinal decomposition} and $\left(\ref{definition of quenched Q}\right)$ (the definition of $\mathbb{Q}^{(\beta)}_{\xi,a}$), we have
	\begin{equation}\nonumber
		\begin{aligned}
			\mathbb{E}_{\mathbb{Q}^{(\beta)}_{\xi,a}}\left[\mathbf{1}_{\left\{w^{(\beta)}_{n}=v\right\}}\prod_{|u|\leq n}g_{u}\left(\xi,V(u)\right)\right]=&\mathbb{E}_{\xi,a}\left[\frac{D^{(\beta)}_{n}(v)}{U\left(\xi,a+\beta\right)e^{-a}}\prod_{|u|\leq n}g_{u}\left(\xi,V(u)\right)\right]\\
			=&\mathbb{E}_{\mathbb{Q}^{(\beta)}_{\xi,a}}\left[\frac{D^{(\beta)}_{n}(v)}{D^{(\beta)}_{n}}\prod_{|u|\leq n}g_{u}\left(\xi,V(u)\right)\right],
		\end{aligned}
	\end{equation}
	which implies
	$$\mathbb{Q}^{(\beta)}_{\xi,a}\left(w^{(\beta)}_{n}=v\mid \mathscr{F}_{n}\right)=\frac{D^{(\beta)}_{n}(v)}{D^{(\beta)}_{n}}.$$
	This proves the first part of the proposition by recalling the definition of $D^{(\beta)}_{n}(v)$.
	
	$\left(2\right)$ For all $n$ and any measurable function $f:\mathbb{R}^{n+1}\to\mathbb{R}_{+}$, by part $\left(1\right)$, we have
	\begin{equation}\nonumber
		\begin{aligned}
			&\mathbb{E}_{\mathbb{Q}^{(\beta)}_{\xi,a}}\left[f\left(V(w^{(\beta)}_{0}), \cdots, V(w^{(\beta)}_{n})\right)\right]\\
			=&\mathbb{E}_{\mathbb{Q}^{(\beta)}_{\xi,a}}\left[\sum_{|v|=n}f\left(V(v_{0}), \cdots, V(v_{n})\right)\mathbf{1}_{\left\{w^{(\beta)}_{n}=v\right\}}\right]\\
			=&\mathbb{E}_{\mathbb{Q}^{(\beta)}_{\xi,a}}\left[\sum_{|v|=n}f\left(V(v_{0}), \cdots, V(v_{n})\right)\frac{U\left(\theta^{n}\xi,V(v)+\beta\right)e^{-V(v)}\mathbf{1}_{\left\{\min_{0\leq k\leq n}V(v_{k})\geq-\beta\right\}}}{D^{(\beta)}_{n}}\right],
		\end{aligned}
	\end{equation}
    then, from the definition of $\mathbb{Q}^{(\beta)}_{\xi,a}$ and many-to-one lemma, we obtain
    \begin{equation}\nonumber
    	\begin{aligned}
    		&\mathbb{E}_{\mathbb{Q}^{(\beta)}_{\xi,a}}\left[\sum_{|v|=n}f\left(V(v_{0}), \cdots, V(v_{n})\right)\frac{U\left(\theta^{n}\xi,V(v)+\beta\right)e^{-V(v)}\mathbf{1}_{\left\{\min_{0\leq k\leq n}V(v_{k})\geq-\beta\right\}}}{D^{(\beta)}_{n}}\right]\\
    		=&\mathbb{E}_{\xi,a}\left[\sum_{|v|=n}f\left(V(v_{0}), \cdots, V(v_{n})\right)\frac{U\left(\theta^{n}\xi,V(v)+\beta\right)e^{-V(v)}\mathbf{1}_{\left\{\min_{0\leq k\leq n}V(v_{k})\geq-\beta\right\}}}{U\left(\xi,a+\beta\right)e^{-a}}\right]\\
    		=&\mathbb{E}_{\xi, a}\left[f\left(S_{0}, \cdots, S_{n}\right)\frac{U(\theta^{n}\xi,S_{n}+\beta)}{U(\xi,a+\beta)}\mathbf{1}_{\left\{\min_{0\leq k\leq n}S_{k}\geq-\beta\right\}}\right].
    	\end{aligned}
    \end{equation}
    This completes the second part of the proposition.
\qed



\begin{thebibliography}{}

\bibitem{AGKV05}
{Afanasyev, V. I., Geiger, J., Kersting, G. and Vatutin, V. A.}
Criticality for branching processes in random environment. \emph{Ann. Probab.} \textbf{33} 645--673, 2005.

\bibitem{Ath00}
{Athreya, K. B.}
Change of measures for Markov chains and the $L\log L$ theorem for branching processes. \emph{Bernoulli} \textbf{6} 323--338, 2000.

\bibitem{Aid13}
{A\"{i}d\'{e}kon, E.}
Convergence in law of the minimum of a branching random walk. \emph{Ann. Probab.} \textbf{41} 1362--1426, 2013.

\bibitem{AS14}
{A\"{i}d{\'e}kon, E. and Shi, Z.}
The Seneta-Heyde scaling for the branching random walk. \emph{Ann. Probab.} \textbf{42} 959--993, 2014.

\bibitem{Big77}
{Biggins, J. D.}
Martingale Convergence in the Branching Random Walk. \emph{J. Appl. Probab.} \textbf{14} 25--37, 1977.

\bibitem{Big03}
{Biggins, J. D.}
Random walk conditioned to stay positive. \emph{J. London Math. Soc. (2)} \textbf{67} 259--272, 2003.

\bibitem{BK04}
{Biggins, J. D. and Kyprianou, A. E.}
Measure change in multitype branching. \emph{Adv. in Appl. Probab.} \textbf{36} 544--581, 2004.

\bibitem{BK05}
{Biggins, J. D. and Kyprianou, A. E.}
Fixed points of the smoothing transform: the boundary case. \emph{Electron. J. Probab.} \textbf{10} 609--631, 2005.

\bibitem{Che15}
{Chen, X.}
A necessary and sufficient condition for the nontrivial limit of the derivative martingale in a branching random walk. \emph{Adv. in Appl. Probab.} \textbf{47} 741--760, 2015.

\bibitem{DL83}
{Durrett, R. and Liggett, M.}
Fixed points of the smoothing transform. \emph{Z. Wahrsch. verw. Gebiete} \textbf{64} 275--301, 1983.

\bibitem{Fel71}
{Feller, W.}
\emph{An Introduction to Probability Theory and its Applications, Vol. II}, 2nd edn. Wiley, New York, 1971.

\bibitem{GLW14}
{Gao, Z., Liu, Q. and Wang, H.}
Central limit theorems for a branching random walk with a random environment in time. \emph{Acta Math. Sci. Ser. B (Engl. Ed.)} \textbf{34} 501--512, 2014.

\bibitem{GdH92}
{Greven, A. and den Hollander, F.}
Branching random walk in random environment: phase transition for local and global growth rates. \emph{Probab. Theory Relat. Fields} \textbf{91} 195--249, 1992.

\bibitem{HL22}
{Hong, W. and Liang, S.}
Random walks in time-inhomogeneous random environment conditioned to stay positive. 2022. Available at arXiv:2211.15017.

\bibitem{HY09}
{Hu, Y. and Yoshida, N.}
Localization for branching random walks in random environment. \emph{Stochastic Process. Appl.} \textbf{119} 1632--1651, 2009.

\bibitem{HL14}
{Huang, C. and Liu, Q.}
Branching random walk with a random environment in time. 2014. Available at arXiv:1407.7623.

\bibitem{Kal02}
{Kallenberg, O.}
\emph{Foundations of Modern Probability}, 2nd edn. Springer, New York, 2002.

\bibitem{Kyp98}
{Kyprianou, A. E.}
Slow variation and uniqueness of solutions to the functional equation in the branching random walk. \emph{J. Appl. Probab.} \textbf{35} 795--802, 1998.

\bibitem{Liu98}
{Liu, Q.}
Fixed points of a generalized smoothing transform and applications to the branching processes. \emph{Adv. Appl. Probab.} \textbf{30} 85--112, 1998.

\bibitem{Liu00}
{Liu, Q.}
On generalized multiplicative cascades. \emph{Stochastic Process. Appl.} \textbf{86} 263--286, 2000.

\bibitem{Lyo97}
{Lyons, R.}
A simple path to Biggins' martingale convergence for branching random walk. In \emph{Classical and modern branching processes} (Minneapolis, MN, 1994), IMA Vol. Math. Appl., \textbf{84} 217--221, Springer, New York, 1997.

\bibitem{LPP95}
{Lyons, R., Pemantle, R. and Peres, Y.}
Conceptual Proofs of $L\log L$ criteria for mean behavior of branching processes. \emph{Ann. Probab.} \textbf{23} 1125--1138, 1995.

\bibitem{Mal15}
{Mallein, B.}
Maximal displacement in a branching random walk through interfaces. \emph{Electron. J. Probab.} \textbf{20} 1--40, 2015.

\bibitem{MM19}
{Mallein, B. and Mi\l{}o{\'s} P.}
Maximal displacement of a supercritical  branching random walk in a time-inhomogeneous random environment. \emph{Stochastic Process. Appl.} \textbf{129} 3239--3260, 2019.

\bibitem{MS22}
{Mallein, B. and Shi, Q.}
A necessary and sufficient condition for the convergence of the derivative martingale in a branching L\'{e}vy process. 2021. Available at arXiv:2105.07919.

\bibitem{Shi15}
{Shi, Z.}
Branching random walks. Lecture notes from the 42nd Probability Summer School held in Saint Flour, 2012. Lecture Notes in Mathematics, \textbf{2151}. \'{E}cole d'\'{E}t\'{e} de Probabilit\'{e}s de Saint-Flour. [Saint-Flour Probability Summer School] Springer, Cham, 2015. 

\bibitem{Tan89}
{Tanaka, H.}
Time reversal of random walks in one dimension. \emph{Tokyo J. Math.} \textbf{12} 159--174, 1989.

\bibitem{WH17}
{Wang, X. and Huang, C.}
Convergence of martingale and moderate deviations for a branching random walk with a random environment in time. \emph{J. Theoret. Probab.} \textbf{30} 961--995, 2017.

\bibitem{YR11}
{Yang, T. and Ren, Y.-X.}
Limit theorem for derivative martingale at criticality w.r.t. branching Brownian motion. \emph{Statist. Probab. Lett.} \textbf{81} 195--200, 2011.

\bibitem{Yos08}
{Yoshida, N.}
Central limit theorem for branching random walks in random environment. \emph{Ann. Appl. Probab.} \textbf{18} 1619--1635, 2008.

\end{thebibliography}
\end{document}